\newtheorem{theorem}{Theorem}
\newtheorem{proposition}{Proposition}
\newtheorem{lemma}{Lemma}
\newtheorem{definition}{Definition}
\numberwithin{equation}{section}
\def\argmin{\mathop{\rm arg\,min}\limits}
\def\Ps{\Psi}
\def\R{\mathcal{R}}
\def\free{z}
\def\MR{MR}
\begin{document}

\title{Adaptive spectral regularizations of high dimensional linear models
\thanks{The author  was partially  supported  by the ANR grant no. ANR-BLAN-0234-01 and  by
Laboratory of Structural Methods of Predictive Modeling and Optimization, MIPT, RF government grant,
ag. 11.G34.31.0073.}}

\author{ Yuri Golubev  \thanks{
Universit\'e de Provence, Marseille, France and
Moscow Institute of Phisics and Technology, Russia.} \\ \texttt{golubev.yuri@gmail.com}}
\date{}

\maketitle

\begin{abstract}
This paper  focuses on recovering  an unknown vector $\beta$
from the noisy data $Y=X\beta +\sigma\xi$, where $X$ is a known
$n\times p$-matrix, $\xi $ is a standard white Gaussian noise,
and $\sigma$ is an unknown noise level. In
order to estimate $\beta$,  a spectral regularization method is used,
and our goal is to choose its regularization parameter  with
the help of the data $Y$. In this paper, we deal  solely with
regularization methods based on  the so-called ordered smoothers
(see  \cite{K})  and  extend  the oracle inequality from  \cite{G1} to the case,
where the noise level is unknown.
\end{abstract}

\section{Introduction and main results}\label{s1}
This paper deals with
  recovering an unknown vector $\beta\in \mathbb{R}^n$ from the noisy data
\begin{equation*}
Y=X\beta +\sigma\xi,
\end{equation*}
where $X$ is a known $n\times p$-matrix with $n\ge p$, $\xi=\bigl(\xi(1),\ldots,\xi(n)\bigr)^\top
$ is a standard white Gaussian noise
($\mathbf{E}\xi(k)=0,\, \mathbf{E}\xi^2(k)=1, \, k=1,\ldots,n$
), and  $\sigma$ is an unknown   noise level.

 In spite of its simplicity, this mathematical model plays an important role  in solving
     practical  inverse problems  like  gravity problems (see, e.g.
 \cite{BMP}), tomography inverse problems \cite{H}, and many others. As a rule,  in  inverse problems
 $n$ and $p$ are very large and therefore the main goal in this paper is to propose an approach suitable for
$n=\infty,\, p=\infty$, severely ill-posed matrices $X^\top X$, and the unknown noise level.

  We begin with the standard maximum likelihood estimate
 \[
\hat{\beta}_0=\argmin_{\beta\in \mathbb{R}^p}\|Y-X\beta\|^2= (X^{\top}X)^{-1}X^{\top}Y ,
 \]
where $ \|z\|^2=\sum_{k=1}^nz^2(k).
 $ It is well known and easy to check  that \[\mathbf{E}(\hat\beta_0-\beta) (\hat\beta_0-\beta)^\top=\sigma^2 (X^\top X)^{-1}\] and thus,  the mean square risk
of $\hat\beta_0$ is computed as follows:
\begin{equation}\label{eqn.1}
\mathbf{E} \|\hat\beta_0-\beta\|^2 =\sigma^2{\rm trace }\Bigl[(X^\top X)^{-1}\Bigr]  =\sigma^2\sum_{k=1}^p
\lambda^{-1}(k),
\end{equation}
where $\lambda(k)$ and $\psi_k\in \mathbb{R}^n$ here and below are  the eigenvalues
and the eigenvectors of $X^{\top}X$
\[
 X^{\top}X\psi_k =\lambda(k) \psi_k.
\]

In this paper, it is assumed  solely that
$\lambda(1) \ge \lambda(2) \ge \cdots\ge \lambda(p)$. This assumption together with
 (\ref{eqn.1}) reveals the main difficulty in $\hat{\beta}_0$:
\textsl{its risk may be very large when $p$ is large or when $X^\top X$ has
a large condition number.}

The natural idea  to improve  $\hat\beta_0$    is to suppress
large $\lambda^{-1}(k)$ in (\ref{eqn.1}) with the help of a linear smoother. Therefore  we make  use of  the following family of linear  estimates
\begin{equation}\label{eqn.2}
\hat{\beta}_\alpha=H_\alpha\hat\beta_0,\,  \alpha\in(0,\alpha^\circ] ,\end{equation}
 where $H_\alpha,\,  \alpha\in(0,\alpha^\circ]$ is a family of $p\times p$-smoothing matrices.

In what follows,
  we deal with the  smoothing matrices admitting the following representation \[
H_\alpha=\sum_{k=1}^p \mathcal{H}_\alpha[\lambda(k)]\psi_k\psi_k^\top,
\]
where  $\mathcal{H}_\alpha(\lambda): \ \mathbb{R}^+\rightarrow [0,1]$ is  such that $$
\lim_{\alpha\rightarrow 0}\mathcal{H}_\alpha(\lambda)=1,\quad \lim_{\lambda\rightarrow
0}\mathcal{H}_\alpha(\lambda)=0. $$

In the literature (see, e.g., \cite{EHN}), this method is called {\it spectral
regularization}. It
covers widely used  regularizations methods such as the Tikhonov-Phillips regularization \cite{TX} known in the statistical literature as \textit{ridge regression}, Landweber's iterations \cite{L},
the $\mu$-method (see e.g. \cite{EHN}), and many others.

 Summarizing,  $\beta$ is estimated with the help of  the  family of linear estimates $\hat{\beta}_\alpha,  \ \alpha\in(0,\alpha^\circ]$ defined by (\ref{eqn.2})
and our goal is to find based on the data at hand the best estimator within this family. Notice that for given $\alpha$,   the mean square risk of $\hat \beta_\alpha$  is computed as follows:
\begin{equation} \label{eqn.3}
\begin{split}
L_\alpha(\beta)\,\stackrel{\rm def}{=}\,&\mathbf{E}\|\hat
\beta_\alpha-\beta\|^2 =\sum_{k=1}^p\bigl[1-
h_\alpha(k)\bigr]^2\langle\beta,\psi_k\rangle^2+\sigma^2
\sum_{k=1}^p \lambda^{-1}(k) h_\alpha^2(k),
\end{split}
\end{equation}
where   $$ h_\alpha(k)\stackrel{\rm def}{=}\mathcal{H}_\alpha[\lambda(k)]\quad  \text{and} \quad
\langle\beta,\psi_k\rangle \stackrel{\rm def}{=}\sum_{l=1}^p \beta(l)\psi_k(l). $$
It is easily seen from (\ref{eqn.3}) that the variance of $\hat \beta_\alpha$ is always smaller than that one of $\hat \beta_0$, but  $\hat \beta_\alpha$ has a non-zero bias and therefore adjusting   $\alpha$ we may  improve the risk of $\hat\beta_0$. This improvement may be very significant
 when $\langle\beta,\psi_k\rangle^2$ are small for
large $k$.

In practice, a good choice of the regularizing matrix family $H_\alpha$ is a delicate
problem related to the computational complexity of $\hat\beta_\alpha$. For details, we refer interested readers to \cite{EHN}.

As a rule,  practical spectral   regularization methods (the spectral cut--off, the Tikhonov-Phillips
regularization,   Landweber's
iterations)  represent    the so-called  \textsl{ordered smoothers}  \cite{K}. This means that the family of functions $\{\mathcal{H}_\alpha(\lambda),\,  \alpha\in (0,\alpha^{\circ}]\}$ is ordered in the following
sense:
\begin{definition}\label{d1}
The family of functions $\{{F}_\alpha(\lambda),\,  \alpha\in A, \, \lambda \in\Lambda\subseteq\mathbb{R}^+\}$ is
 ordered if:
\begin{enumerate}
\item  For any given $\alpha\in A$, $ F_\alpha(\lambda): \ \Lambda\rightarrow [0,1]$ is  a monotone function of $\lambda$.
\item  If for some $ \alpha_1,\alpha_2\in A$ and some $ \lambda'\in \Lambda$,
 $F_{\alpha_1}(\lambda') < F_{\alpha_2}(\lambda')$,
 then for all
$  \lambda\in \Lambda$,  $
 F_{\alpha_1}(\lambda) \le  F_{\alpha_2}(\lambda)$.
\end{enumerate}
\end{definition}

 The next important  question usually arising in practice is related to the data-driven choice of
 the regularization parameter $\alpha$.    In statistical literature, one can  find several general approaches to this problem. We cite here, for instance,
the Lepski method which has been  adopted  to inverse problems in \cite{Ma}, \cite{BH}, \cite{BHMR}, and the model selection technique which was used in \cite{LL}.

The approach proposed in this paper is  a slight modification of the unbiased risk estimation. To make the presentation simpler, we begin with  the case, where the noise level $\sigma^2$ is known.
Intuitively, a good data-driven regularization  parameter should minimize in some sense the risk
$L_\alpha(\beta)$ (see
(\ref{eqn.3})). Obviously, the best regularization parameter  minimizing   $L_\alpha(\beta)$
cannot be used since it depends on the unknown parameter of interest $\beta$.
However, the idea of minimization of $L_\alpha(\beta)$  may be  put into practice with
 the help of  the
empirical risk minimization principle  defining  the
regularization parameter  as follows:
\begin{equation}\label{eqn.4}
\hat\alpha=\argmin_\alpha {R}_{\alpha}^\sigma[Y,Pen],
\end{equation}
where $$ {R}_\alpha^\sigma[Y,Pen]\stackrel{\rm def}{=}
\|\hat\beta_0-\hat\beta_\alpha\|^2+\sigma^2Pen(\alpha),$$
 and $Pen(\alpha): (0,\alpha^{\circ}]\rightarrow\mathbb{R}^+$ is a given function  called \textsl{penalty}.
 The main idea in this approach is to link  $L_\alpha(\beta)$ and  ${R}_\alpha^\sigma[Y,Pen]$. Heuristically, we want  to find
    a minimal penalty $Pen(\alpha)$ that ensures the following inequality
 \begin{equation}\label{eqn.5}
 L_\alpha(\beta)\lesssim
{R}_{\alpha}^\sigma[Y,Pen]+\mathcal{C},
\end{equation}
where $\mathcal{C}$ is a random variable that doesn't depend on $\alpha$.
It is convenient to define this constant as follows:
$$
\mathcal{C}=-\|\beta-\hat\beta_0\|^2=-\sigma^2\sum_{k=1}^p \lambda^{-1}(k)\xi^2(k).
$$

The traditional approach to solving (\ref{eqn.5}) is based on the minimization of the unbiased risk estimate. In this method,   the penalty is computed  as a root of the equation
\begin{equation} \label{eqn.6}
L_\alpha(\beta)=
\mathbf{E}\Bigl\{{R}_{\alpha}^\sigma[Y,Pen_u]+\mathcal{C}\Bigr\}.
\end{equation}
One can check with a simple algebra that
\[
Pen_{u}(\alpha)=2 \sum_{k=1}^p
\lambda^{-1}(k) h_\alpha(k).
\]
The idea of this penalty goes back to \cite{A} and  \cite{CGPT} provides some oracle inequalities related to  this approach.

Another well-known and widely used approach to the data-driven choice of $\alpha$  is  related to the cross validation technique
\cite{DMR}. In the framework of our statistical model,
this method prompts a data-driven regularization parameter which is close to
\[
\hat{\alpha}_{CV}=\arg\min_{\alpha}\Bigl\{\|Y-X\hat{\beta}_\alpha\|^2+
\sigma^2Pen_{CV}(\alpha)\Bigr\},
\]
with
\[
Pen_{CV}(\alpha)=2 \sum_{k=1}^p
 h_\alpha(k).
\]
It is well-known (see e.g. \cite{K}) that if the risk of $\hat{\beta}$
is measured by $\mathbf{E}\|X\hat{\beta}-X\beta\|^2$,  then this penalty is nearly optimal and it works always well.

However, the question \textit{Does $\hat{\alpha}_{CV}$ works well when  the risk is measured
by $\mathbf{E}\|\hat{\beta}-\beta\|^2$  ?} has a
delicate answer depending  on the spectrum of $X^\top X$. To the best of our knowledge there are no  oracle inequalities controlling  the risk of $\hat{\beta}{\hat{\alpha}_{CV}}$ uniformly in $\beta$.
Notice, however, that one can show with the help of the method for computing minimal penalties    in \cite{BM}, that if $\lambda(k)\le \exp(-\kappa k)$, then
  the risk of this method  blows up starting from  some $\kappa>0$.

The similar effect takes place in the unbiased risk estimation.
This happens because the standard deviation of ${R}_{\alpha}^\sigma[Y,Pen_{u}]+\mathcal{C}$
may be very large with respect to the mean  $\mathbf{E}\bigl\{{R}_{\alpha}^\sigma[Y,Pen_{u}]+\mathcal{C}\big\}$
and therefore (\ref{eqn.5}) may fail with a high probability.

To improve the above mentioned  drawbacks of the unbiased risk estimation,   we define, following \cite{G1}, the penalty as a minimal root of the equation
\begin{equation}\label{eqn.7}
\begin{split}
&\mathbf{E}\sup_{ \alpha\le \alpha^{\circ}} \Bigl[L_\alpha(\beta)-
{R}_{\alpha}^\sigma[Y,Pen]-\mathcal{C}\Bigr]_+  \le C_1\mathbf{E} \Bigl[ L_{\alpha^{\circ}}(\beta)-
{R}_{\alpha^{\circ}}^\sigma[Y,Pen]-\mathcal{C}\Bigr]_+,
\end{split}
\end{equation}
 where $[x]_+=\max\{0,x\}$ and
$C_1>1$ is a constant. Heuristic motivation behind this approach is rather transparent. We are looking for the minimal penalty that balances the excess risks corresponding to all possible  $\alpha\in (0,\alpha^{\circ}]$.  Recall that the excess risk is defined by the  difference between the  risk of the estimate and its penalized  empirical risk. Note that in view of (\ref{eqn.5}), we can deal  solely with    the positive part of the excess  risk.

   In order to explain heuristically  how Equation  (\ref{eqn.7}) may be solved,  we begin with the spectral representation of the underlying statistical problem.
 One can check easily that
\begin{equation*}
y(k)\stackrel{\rm def }{=} \langle X^\top Y,\psi_k\rangle/ \lambda(k) =
\langle \beta,\psi_k\rangle +\sigma \xi'(k)/\sqrt{\lambda(k)},
\end{equation*}
where  $\xi'(k)$ are i.i.d. $\mathcal{N}(0,1)$. With these notations,
$\hat\beta_\alpha$ admits the following representation
\[
\langle \hat\beta_\alpha,\psi_k\rangle =h_\alpha(k)y(k)=h_\alpha(k)\beta(k)+\sigma h_\alpha(k)\xi'(k)/\sqrt{\lambda(k)},
\]
where $\beta(k)=\langle \beta,\psi_k\rangle$, and therefore
\begin{equation} \label{eqn.8}
\begin{split}
\|\hat\beta_0-\hat\beta_\alpha\|^2&=\sum_{k=1}^p\bigl[1-h_\alpha(k)\bigr]^2
y^2(k),\\
\|\beta-\hat\beta_\alpha\|^2&=\sum_{k=1}^p\bigl[\beta(k)-h_\alpha(k)y(k)\bigr]^2.
\end{split}
\end{equation}

In what follows, it is assumed that the penalty  has the following structure
   \begin{equation*}
   Pen(\alpha) =Pen_{u}(\alpha)+(1+\gamma)Q(\alpha),
   \end{equation*}
   where $\gamma$ is a small positive number and $Q(\alpha), \, \alpha>0$  is a positive  function of $\alpha$ to be defined later on. Recall that the first term at the right-hand side
   is  obtained from the unbiased risk estimation (see Equation (\ref{eqn.6})).
 With $Pen(\alpha)$ we can rewrite  the excess risk  as follows:
 \begin{align}\label{eqn.9}
 \begin{split}
 & L_\alpha(\beta)-{R}_{\alpha}^\sigma[Y,Pen]-\mathcal{C}\\
&\quad =\sigma^2 \sum_{k=1}^p \lambda^{-1}(k)\bigl[2h_\alpha(k)-h_\alpha^2(k)\bigr](\xi'^2(k)-1)-(1+\gamma)
\sigma^2Q(\alpha)\\
&\qquad +2\sigma  \sum_{k=1}^p \lambda^{-1/2}(k)\bigl[2h_\alpha(k)-h_\alpha^2(k)\bigr]\xi'(k)\beta(k).
\end{split}
 \end{align}

The first idea in solving (\ref{eqn.7}) is based on the  the fact  that the cross term
 \[
 2\sigma  \sum_{k=1}^p \lambda^{-1/2}(k)\bigl[2h_\alpha(k)-h_\alpha^2(k)\bigr]\xi'(k)\beta(k)
 \]
 is typically  small with respect to $\mathbf{E}\bigl\{R_\alpha^\sigma[Y,Pen]+\mathcal{C}\bigr\}$ (see for more details  Lemma 9 in \cite{G1}). With this in mind, omitting the cross term,  Equation  (\ref{eqn.7}) can be rewritten in
  the following nearly equivalent  form
 \begin{equation}\label{eqn.10}
 \mathbf{E}\sup_{\alpha\le \alpha^{\circ}}[\eta_\alpha-(1+\gamma)Q(\alpha)]_+ \lesssim C_1\mathbf{E}[\eta_{\alpha^{\circ}}-(1+\gamma)Q(\alpha^{\circ})]_+ \asymp D(\alpha^{\circ}),
 \end{equation}
 where
 \[
 \eta_\alpha\stackrel{\rm def }{=}\sum_{k=1}^p \lambda^{-1}(k)\bigl[2h_\alpha(k)-h_\alpha^2(k)\bigr][\xi'^2(k)-1]
 \]
 and
 \[
 D(\alpha)\stackrel{\rm def }{=} \sqrt{\mathbf{E}\eta^2_\alpha}=\biggl\{2\sum_{k=1}^p \lambda^{-2}(k)\bigl[2h_\alpha(k)-h_\alpha^2(k)\bigr]^2\biggr\}^{1/2}.
 \]

 Now we are in a position to compute an approximation of the minimal root for (\ref{eqn.10}). It is clear that $Q(\alpha)\ge Q^+(\alpha) $, where $ Q^+(\alpha) $ is a root of
\begin{equation}\label{eqn.11}
\mathbf{E}[\eta_\alpha-Q^+(\alpha)]_+=D(\alpha^{\circ}).
\end{equation}

To find a feasible solution to (\ref{eqn.11}), we make use of  the exponential Chebychev inequality resulting in
\begin{equation}\label{eqn.12}
\mathbf{E}[\eta-x]_+^p\le \Gamma(p+1)\lambda^{-p}\exp(-\lambda x)
\mathbf{E}\exp(\lambda\eta),
\end{equation}
where $\eta$ is a random variable, $\Gamma(\cdot)$ is the gamma function, and
  $\lambda>0$.

Therefore we define  $Q^+(\alpha)$ as a root of equation
\begin{equation*}
\inf_{\lambda}\exp[-\lambda Q^+(\alpha)]\mathbf{E}\exp(\lambda \eta_\alpha)=D(\alpha^{\circ}).
\end{equation*}

It is easy to check with a simple algebra that
\begin{equation}\label{eqn.13}
Q^+(\alpha)=2D(\alpha)\mu_\alpha\sum_{k=1}^p\frac{\rho^2_\alpha(k)}{1-2\mu_\alpha \rho_\alpha(k)},
\end{equation}
where  $\mu_\alpha$ is a root of the equation
 \begin{equation}\label{eqn.14}
 \sum_{k=1}^p F[\mu_\alpha \rho_\alpha(k)]=\log \frac{D(\alpha)}{D(\alpha^{\circ})},
 \end{equation}
 and
 \begin{equation}\label{eqn.15}
 \begin{split}
 F(x)&=\frac{1}{2}\log(1-2x)+x +\frac{2x^2}{1-2x},\\
 \rho_\alpha(k)&=\sqrt{2}D^{-1}(\alpha)\lambda^{-1}(k)\bigl[2h_\alpha(k)
 -h_\alpha^2(k)\bigr].
 \end{split}
 \end{equation}

 The next result  (see  also Theorem 1 in \cite{G1}) shows that  $Q^+(\alpha)$ is a nearly optimal solution to (\ref{eqn.10}).
\begin{proposition}  \label{proposition.1}  For any $\gamma>q\ge 0$
\begin{equation*}
\mathbf{E}\sup_{\alpha\le \alpha^{\circ}}\Bigl[\eta_\alpha-(1+\gamma)Q^+(\alpha)\Bigr]_+^{1+q}\le \frac{CD^{1+q}(\alpha^{\circ})}{(\gamma-q)^3},
\end{equation*}
where here and throughout the paper $C$ denotes a generic constant.
\end{proposition}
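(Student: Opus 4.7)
The natural attack is to apply the exponential Chebyshev inequality (\ref{eqn.12}) pointwise in $\alpha$ and then pass to the supremum by a peeling argument based on the ordered-smoother structure. Since $\eta_\alpha$ is a linear combination of independent centered chi-squared random variables with coefficients $a_\alpha(k)=\lambda^{-1}(k)[2h_\alpha(k)-h_\alpha^2(k)]\ge 0$, its log moment generating function factorizes as $\sum_k[-\lambda a_\alpha(k)-\tfrac12\log(1-2\lambda a_\alpha(k))]$. Reparametrising $\lambda=\sqrt{2}\,\mu/D(\alpha)$ turns $\lambda a_\alpha(k)$ into $\mu\rho_\alpha(k)$, and the definitions (\ref{eqn.13})--(\ref{eqn.15}) are tuned precisely so that the infimum in Chebyshev's bound at level $Q^+(\alpha)$ is attained at $\mu=\mu_\alpha$ and equals $D(\alpha^\circ)/D(\alpha)$. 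Replacing $Q^+$ by $(1+\gamma)Q^+$ introduces an extra factor $\exp(-\gamma\lambda\,Q^+(\alpha))$, and (\ref{eqn.12}) with $p=1+q$ gives the pointwise estimate
\begin{equation*}
\mathbf{E}\bigl[\eta_\alpha-(1+\gamma)Q^+(\alpha)\bigr]_+^{1+q}\le\Gamma(2+q)\,\lambda^{-(1+q)}\cdot\frac{D(\alpha^\circ)}{D(\alpha)}\cdot\exp\bigl(-\gamma\lambda Q^+(\alpha)\bigr).
\end{equation*}

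A short convexity computation based on (\ref{eqn.13})--(\ref{eqn.14}) yields the comparisons $\lambda Q^+(\alpha)\ge\log[D(\alpha)/D(\alpha^\circ)]$ (from the pointwise inequality $F(x)\le 2x^2/(1-2x)$) together with $\mu_\alpha^2\ge\log[D(\alpha)/D(\alpha^\circ)]$ (from $F(x)\ge x^2$, visible on the power-series expansion with nonnegative coefficients). Substituting these into the pointwise bound and collecting powers of $D(\alpha)/D(\alpha^\circ)$ collapses it to a geometrically decaying estimate in that ratio. To upgrade this into a uniform bound over $\alpha$ I would partition $(0,\alpha^\circ]$ into slices $A_j=\{\alpha:2^j\le D(\alpha)/D(\alpha^\circ)<2^{j+1}\}$, $j\ge 0$: Definition \ref{d1} together with the monotonicity of $x\mapsto 2x-x^2$ on $[0,1]$ forces $\alpha\mapsto D(\alpha)$ to be monotone and the family $\{a_\alpha(k)\}_k$ to move comonotonically in $\alpha$, so $\sup_{\alpha\in A_j}\eta_\alpha$ can be dominated by a single $\eta_{\alpha_j}$ at an endpoint of $A_j$ up to an error handled by the same Chebyshev machinery. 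Applying the pointwise bound at each $\alpha_j$ and summing over $j$ gives a geometric series $\sum_j 2^{-j(\gamma-q)}$, convergent whenever $\gamma>q$; the advertised $(\gamma-q)^{-3}$ factor emerges after tracking the slack $\gamma-q$ through three successive integration steps (one from summing the series in $j$, two from converting Chebyshev tail bounds into $(1+q)$-th moment bounds).

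The main obstacle is the peeling step rather than the MGF calculation. Because the summands $\xi'^2(k)-1$ are signed, $\eta_\alpha$ is not itself monotone in $\alpha$, and one cannot trivially replace $\sup_{\alpha\in A_j}\eta_\alpha$ by the value at a single representative. The resolution, already present in \cite{G1}, exploits the ordered-smoother property to write $\eta_\alpha-\eta_{\alpha_j}$ on $A_j$ as a linear combination of the $\xi'^2(k)-1$ with sign-definite coefficients of controlled $\ell^2$-norm, and the resulting correction term admits the same pointwise Chebyshev treatment as $\eta_{\alpha_j}$ itself. Once that envelope bound is in place, the remainder of the argument is the routine geometric summation described above.
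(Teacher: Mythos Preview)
The paper does not actually prove Proposition~\ref{proposition.1}; it is imported from \cite{G1} (``see also Theorem 1 in \cite{G1}''), so there is no in-paper proof to compare against line by line. That said, your plan is exactly the architecture used in \cite{G1} and visible throughout this paper's own arguments (most transparently in the proof of Lemma~\ref{lemma.4}): pointwise exponential Chebyshev tuned via (\ref{eqn.13})--(\ref{eqn.15}), a geometric peeling of $(0,\alpha^\circ]$ according to the size of $D(\alpha)/D(\alpha^\circ)$, and control of the fluctuation $\sup_{\alpha\in A_j}(\eta_\alpha-\eta_{\alpha_j})$ on each shell via the ordered-process machinery of Proposition~\ref{proposition.2}. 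Two small corrections worth flagging: first, the defining equation just before (\ref{eqn.13}) sets $\inf_\lambda \exp(-\lambda Q^+(\alpha))\mathbf{E}\exp(\lambda\eta_\alpha)=D(\alpha^\circ)$, not $D(\alpha^\circ)/D(\alpha)$, so your displayed pointwise bound should carry $D(\alpha^\circ)$ rather than the ratio (the $D(\alpha)$ dependence enters instead through $\lambda^{-(1+q)}\asymp (D(\alpha)/\mu_\alpha)^{1+q}$ and through $\lambda Q^+(\alpha)$); second, in the peeling step you will need more than ``the same pointwise Chebyshev treatment'' on the increment, since it is $\sup_{\alpha\in A_j}(\eta_\alpha-\eta_{\alpha_j})$ that must be controlled --- this is precisely where Proposition~\ref{proposition.2} is invoked, exactly as in (\ref{eqn.30}). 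With those adjustments your outline is correct and matches the intended proof.
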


Let us now  turn  to the case, where $\sigma$ is unknown. To compute the data-driven regularization parameter  in this situation,
we replace $\sigma^2$ in  $R^\sigma_\alpha[Y,Pen]$ by  the standard variance  estimator
\[
 \hat{\sigma}^2_\alpha=\frac{\|Y-X\hat\beta_{\alpha}\|^2}{\|1-H_{\alpha}\|^2}.
 \]
 Thus we arrive at the following approximation of the empirical risk
\begin{equation*}
R_\alpha[Y,Pen]\stackrel{\rm def}{=}
\|\hat\beta_0-\hat\beta_\alpha\|^2+\frac{\|Y-X\hat\beta_\alpha\|^2}{\|1-H_\alpha\|^2}Pen(\alpha)
\end{equation*}
and the data-driven regularization parameter is computed now as follows:
\begin{equation*}
\hat\alpha =\argmin_{\alpha_\circ\le\alpha\le\alpha^\circ}R_\alpha[Y,Pen].
\end{equation*}

Notice that in contrast to the case of known $\sigma$, it assumed that $\alpha$ is bounded from below by $ \alpha_\circ$. This constraint ensures  that with a hight probability $[\sigma^2-\hat\sigma_{\hat{\alpha}}^2]_+< \sigma^2/2$ uniformly in $\beta\in \mathbb{R}^p$. Unfortunately, when this inequality fails we cannot control
correctly the risk of $\hat\beta_{\hat\alpha}$ since it may blow up (see \cite{BM} for similar phenomenon in the model selection).
  So, to  avoid the blowup, we need a relatively good estimate
of $\sigma$, or equivalently, large $\|1-H_\alpha\|^2$.

Stress also that since $\alpha_\circ$ cannot  depend on $\sigma$, we would like
to have $\alpha_\circ$ as small as possible  to be sure that the methods works for  small noise levels. From a mathematical viewpoint, this means that we need a relatively
good upper bound for $\mathbf{E}| \sigma-\hat\sigma_{\hat{\alpha}}^2|Pen(\hat{\alpha})$. Roughly speaking, we have to check that with a hight probability
\[
| \sigma^2-\hat\sigma_{\hat{\alpha}}^2|Pen(\hat{\alpha})\ll\sigma^2 Pen(\hat{\alpha}).
\]
The main difficulty in proving this equation is related to the fact that the random variables $\sigma^2-\hat\sigma_{\hat{\alpha}}^2$ and $ Pen(\hat{\alpha})$ are dependent. To overcome this difficulty we make use of
the law of the iterated  logarithm for $\sigma^2-\hat\sigma_{\hat{\alpha}}^2$  combined with a generalization of the
H\"{o}lder inequality (see Lemmas \ref{lemma.4} and \ref{lemma.3} below).
To carry out this approach, we need the following additional condition: \textit{there exists a
positive constant $C_2$ such that for all $\alpha\in (0,\alpha^\circ]$
\begin{align}\label{eqn.16}
&\|h_\alpha\|^2_\lambda \ge C_2 \sum_{k=1}^p \lambda^{-1}(k)h_\alpha(k), \\[5pt]
\label{eqn.17}
&\frac{\|h_\alpha\|^2_\lambda}{\log[D(\alpha)/D(\alpha^\circ)]}+\max_k \frac{h_\alpha(k)}{\lambda(k)}\ge C_2 D(\alpha),
\end{align}
where
\[
\|h_\alpha\|^2_\lambda =\sum_{k=1}^p \lambda^{-1}(k)h^2_\alpha(k).
\]
}

Denote for brevity
\begin{equation*}
\begin{split}
 \Ps(\alpha_\circ,\alpha^\circ) \stackrel{\rm def}{=}
\frac{1}{\|1-h_{\alpha_\circ}\|}\biggl\{\biggl[\log\log\biggl(1+\frac{\|1-h_{\alpha_\circ}\|^2}{\|1-h_{\alpha^\circ}\|^2}\biggr)\biggr]^{1/2}
\\ {}+\log \biggl(1+\frac{Pen(\alpha_\circ)}{Pen(\alpha^\circ)}\biggr)\biggr\}
.
\end{split}
\end{equation*}

The following theorem   controls the risk of  $\hat\beta_{\hat\alpha}$ via the  penalized oracle risk defined by
 \[
 r(\beta)\stackrel{\rm def}{=}\inf_{\alpha_\circ\le \alpha\le \alpha^{\circ}} \bar{R}_{\alpha}(\beta),
 \]
 where
\begin{equation*}
\begin{split}
 \bar{R}_{\alpha}(\beta)\,\stackrel{\rm def}{=}\,\mathbf{E}_\beta\bigl\{
R_\alpha[Y,Pen]+\mathcal{C}\bigr\} = L_\alpha(\beta) +(1+\gamma)\sigma^2Q^+(\alpha)\\ {}+ \frac{Pen(\alpha)}{\|1-h_\alpha\|^2}\sum_{k=1}^p \bigl[1-h_\alpha(k)\bigr]^2 \lambda(k) \beta^2(k).
\end{split}
\end{equation*}

 \begin{theorem}\label{theorem.1} Let
 $ Pen(\alpha) =2 \sum_{k=1}^p
\lambda^{-1}(k) h_\alpha(k)+(1+\gamma)Q^+(\alpha)
$ with $Q^+(\alpha)$ defined by (\ref{eqn.13}--\ref{eqn.15}) and suppose (\ref{eqn.16}--\ref{eqn.17}) hold.
Then, uniformly in $\beta\in \mathbb{R}^p$,
 \begin{align}\label{eqn.18}
 \begin{split}
\mathbf{E}_\beta\|\beta -\hat \beta_{\hat\alpha}\|^2\le
\biggl[1+C\Ps(\alpha_\circ,\alpha^\circ)+ C\log^{-1/2}\frac{ r(\beta)}{\sigma^2 D(\alpha^{\circ})}\biggr]
r(\beta)\\
{}+\frac{C\sigma^2D(\alpha^\circ)}{[1-C\Ps(\alpha_\circ,\alpha^\circ)/\gamma]_+\sqrt{\gamma} }\R\biggl[\frac{r(\beta)}{\sigma^2\gamma D(\alpha^{\circ})}+ \frac{1}{\gamma^4}\biggr]
,
 \end{split}
 \end{align}
where
$
\R(x)={x}/{\log(x)}.
$
\end{theorem}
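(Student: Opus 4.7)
The plan is to compare $\hat\alpha$ with the penalized oracle $\alpha^{\ast}=\argmin_{\alpha_\circ\le\alpha\le\alpha^\circ}\bar R_\alpha(\beta)$ and to split the residual excess risk into a known-$\sigma$ component, already controlled by Proposition~\ref{proposition.1}, and a noise-estimation component, which is the genuinely new contribution. The starting point is the defining inequality $R_{\hat\alpha}[Y,Pen]\le R_{\alpha^{\ast}}[Y,Pen]$; rewriting and using $\mathcal{C}=-\|\beta-\hat\beta_0\|^2$,
\begin{equation*}
\|\beta-\hat\beta_{\hat\alpha}\|^2\le R_{\alpha^{\ast}}[Y,Pen]+\mathcal{C}+\sup_{\alpha_\circ\le\alpha\le\alpha^\circ}\bigl[\|\beta-\hat\beta_\alpha\|^2-R_\alpha[Y,Pen]-\mathcal{C}\bigr]_+.
\end{equation*}
Taking expectation, the first two terms essentially yield $\bar R_{\alpha^{\ast}}(\beta)=r(\beta)$, modulo the easy-to-handle bias $\mathbf{E}(\hat\sigma^2_{\alpha^{\ast}}-\sigma^2)Pen(\alpha^{\ast})$ evaluated at the fixed $\alpha^{\ast}$.

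Next I would decompose the process inside the supremum via $\hat\sigma^2_\alpha=\sigma^2+(\hat\sigma^2_\alpha-\sigma^2)$, so that
\begin{equation*}
\|\beta-\hat\beta_\alpha\|^2-R_\alpha[Y,Pen]-\mathcal{C}=\bigl(\|\beta-\hat\beta_\alpha\|^2-R_\alpha^\sigma[Y,Pen]-\mathcal{C}\bigr)+(\sigma^2-\hat\sigma^2_\alpha)Pen(\alpha).
\end{equation*}
The first summand is the known-$\sigma$ excess risk. By (\ref{eqn.9}) it equals $\eta_\alpha-(1+\gamma)\sigma^2 Q^+(\alpha)$ plus the cross term, the latter controlled as in Lemma~9 of \cite{G1} by a Cauchy--Schwarz argument that produces the factor $\log^{-1/2}[r(\beta)/\sigma^2 D(\alpha^\circ)]$ in (\ref{eqn.18}). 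Proposition~\ref{proposition.1} then bounds the supremum of the remainder by a multiple of $\sigma^2 D(\alpha^\circ)/\gamma^3$.

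The new difficulty is $\sup_{\alpha}(\sigma^2-\hat\sigma^2_\alpha)Pen(\alpha)$. In the spectral representation $\hat\sigma^2_\alpha-\sigma^2$ is a quadratic functional of $\xi'$ whose variance scales like $\|1-h_\alpha\|^{-2}$, and the family is ordered in $\alpha$, so I would apply the law of the iterated logarithm (Lemma~\ref{lemma.4}) to obtain the uniform bound
\begin{equation*}
\sup_{\alpha_\circ\le\alpha\le\alpha^\circ}|\sigma^2-\hat\sigma^2_\alpha|\;\lesssim\;\frac{\sigma^2}{\|1-h_{\alpha_\circ}\|}\biggl[\log\log\biggl(1+\frac{\|1-h_{\alpha_\circ}\|^2}{\|1-h_{\alpha^\circ}\|^2}\biggr)\biggr]^{1/2},
\end{equation*}
which supplies the first term of $\Ps(\alpha_\circ,\alpha^\circ)$. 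A naive Cauchy--Schwarz against $Pen(\hat\alpha)$ would be far too lossy because $Pen(\alpha)$ can be much larger than $r(\beta)$ for small $\alpha$; instead I would invoke the generalized H\"older inequality of Lemma~\ref{lemma.3}, which exploits orderedness to decouple $|\sigma^2-\hat\sigma^2_\alpha|$ from $Pen(\hat\alpha)$ essentially multiplicatively, at the cost of the extra logarithmic factor $\log(1+Pen(\alpha_\circ)/Pen(\alpha^\circ))$. This produces the full $\Ps(\alpha_\circ,\alpha^\circ)$.

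Assembling the pieces, the bound has the schematic form $\mathbf{E}_\beta\|\beta-\hat\beta_{\hat\alpha}\|^2\le r(\beta)[1+C\Ps]+(C\Ps/\gamma)\,\mathbf{E}_\beta\|\beta-\hat\beta_{\hat\alpha}\|^2+(\text{tail})$, because $Pen(\hat\alpha)$ itself is bounded through the excess risk one is trying to control; solving this linear inequality produces the denominator $[1-C\Ps(\alpha_\circ,\alpha^\circ)/\gamma]_+$, while the tail term $\R(x)=x/\log x$ emerges from combining the $(1+q)$-moment estimate of Proposition~\ref{proposition.1} with an optimization in $q$ inside the H\"older step. I expect this coupling between $\hat\sigma^2_{\hat\alpha}$ and $Pen(\hat\alpha)$ to be the main obstacle; conditions (\ref{eqn.16})--(\ref{eqn.17}) are what make the bookkeeping quantitative, with (\ref{eqn.16}) ensuring that $\hat\sigma^2_\alpha$ concentrates on the scale of $\|h_\alpha\|^2_\lambda$ and (\ref{eqn.17}) ensuring that $Q^+(\alpha)$ dominates $D(\alpha)$ strongly enough that the LIL fluctuation is absorbed into $\gamma\,Q^+(\alpha)$ rather than destroying the penalty.
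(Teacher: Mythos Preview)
Your overall architecture---split via $\hat\sigma^2_\alpha=\sigma^2+(\hat\sigma^2_\alpha-\sigma^2)$, control the known-$\sigma$ piece through Proposition~\ref{proposition.1}, handle the variance-estimation piece with the LIL bound of Lemma~\ref{lemma.4} together with the generalized H\"older inequality of Lemma~\ref{lemma.3}, and close a self-referential linear inequality to produce the denominator $[1-C\Ps(\alpha_\circ,\alpha^\circ)/\gamma]_+$---matches the paper's route, and Lemma~\ref{lemma.5} is exactly the place where Lemmas~\ref{lemma.3} and~\ref{lemma.4} are combined as you describe.

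There is, however, a genuine gap in your account of where the term $\R(x)=x/\log x$ comes from. It does \emph{not} emerge from ``combining the $(1+q)$-moment estimate of Proposition~\ref{proposition.1} with an optimization in~$q$''. In the paper the stochastic term $\sum_k\lambda^{-1}(k)\tilde h_{\hat\alpha}(k)[\xi'^2(k)-1]$ in the final assembly is \emph{not} bounded by a uniform $\sup_\alpha$ argument; it is bounded at the random point $\hat\alpha$ via a Proposition~\ref{proposition.4}-type estimate (Lemma~\ref{lemma.6}) that reduces everything to controlling the moment $\mathbf{E}\bigl[D(\hat\alpha)/D(\alpha^\circ)\bigr]^{1+\gamma/4}$. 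Bounding this moment is a separate, substantial step (Lemma~\ref{lemma.8}): one first shows the a~priori bound on $\|h_{\hat\alpha}\|_\lambda^2+Q^+(\hat\alpha)$ (your linear-inequality idea, formalized as Lemma~\ref{lemma.7}), and then uses the deterministic lower bound
\[
\|h_\alpha\|_\lambda^2+Q^+(\alpha)\ \ge\ C\,D(\alpha)\log\frac{D(\alpha)}{D(\alpha^\circ)},
\]
which is precisely where condition~(\ref{eqn.17}) is exploited (through the estimate $\mu_\alpha^{-1}\ge 2\rho_\alpha(k_\alpha)$ derived from (\ref{eqn.14})). Inverting $x\log x$ is what produces $\R$ and the $\gamma^{-4}$; without this intermediate control of $D(\hat\alpha)$ your argument cannot close, because the noise process at $\hat\alpha$ has variance of order $D(\hat\alpha)$, not $D(\alpha^\circ)$.

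A second, smaller point: the factor $\log^{-1/2}\bigl[r(\beta)/\sigma^2 D(\alpha^\circ)\bigr]$ does not come directly from a Cauchy--Schwarz on the cross term. In the paper one introduces an auxiliary skewing parameter $\epsilon>0$ and works with the ``skewed'' excess risk $\|\beta-\hat\beta_{\hat\alpha}\|^2-(1+\epsilon)\{R_{\hat\alpha}[Y,Pen]+\mathcal{C}\}$; after all the pieces are bounded one is left with $\epsilon\exp(C/\epsilon^2)+\epsilon\,r(\beta)/[\sigma^2 D(\alpha^\circ)]$, whose minimization over $\epsilon$ gives the $\log^{-1/2}$ factor. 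Your decomposition with $\sup_\alpha[\cdot]_+$ conflates $L_\alpha(\beta)$ and $\|\beta-\hat\beta_\alpha\|^2$ (equation~(\ref{eqn.9}) is written for the former), and the $\epsilon$-device is what reconciles them.
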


Notice that  Equation \ref{eqn.18} can be rewritten  in the following concise form
\begin{equation}\label{eqn.19}
\mathbf{E}_\beta\|\beta -\hat \beta_{\hat\alpha}\|^2\le  \biggl[1+C\Ps(\alpha_\circ,\alpha^\circ)+ \Psi_{\alpha_\circ,\gamma}\biggl(\frac{ r(\beta)}{\sigma^2 D(\alpha^{\circ})}\biggr)\biggr]r(\beta),
\end{equation}
where $\Psi_{\alpha_\circ,\gamma}(\cdot)$ is a bounded function such that
\begin{equation}\label{eqn.20}
\lim_{x\rightarrow \infty}\Psi_{\alpha_\circ,\gamma}(x)=0.
\end{equation}

The statistical sense of (\ref{eqn.19}) is rather transparent: this equation shows that in typical nonparametric  situations the method  works   like the ideal penalized oracle with the risk $r(\beta)$. \textit{The typical nonparametric situation} means that
\begin{itemize}
\item
 $p$ is large, so,  for properly chosen  $\alpha_\circ$, $ \Ps(\alpha_\circ,\alpha^\circ)$ is small,
 \item the vector $(\langle\beta,\psi_1\rangle,\ldots \langle\beta,\psi_p\rangle)^\top$ contains many significant  components, and thus $r(\beta)\gg \sigma^2D(\alpha^\circ)$.
\end{itemize}

These assumptions are typical in the minimax estimation, where it is assumed that $\beta$ belongs to an
ellipsoid.  Notice that with the help of  (\ref{eqn.19}--\ref{eqn.20})
one can check relatively easily   that for a proper chosen  spectral regularization, $\hat\beta_{\hat\alpha}$ is the asymptotically minimax estimate up to a constant (see for details \cite{G1} and \cite{P}).

  We finish this section with a short discussion of  Conditions (\ref{eqn.16}--\ref{eqn.17}).
  Equation (\ref{eqn.16})  means that $h_\alpha(k)$ vanishes rather rapidly for large $k$. This is always true for the spectral cut-off method
($h_\alpha(k)=\mathbf{1}\{\alpha \lambda(k)\ge 1\}$).
Indeed,  if
 $\lambda(k)\asymp k^{-p}$ with some $p\ge 0$, then
 \[
 \|h_\alpha\|^2_\lambda \asymp {\alpha}^{-p-1}, \quad
 D(\alpha) \asymp {\alpha}^{-p-1/2}
 \]
 and it is seen easily that (\ref{eqn.17}) is fulfilled. Assume now that $X^\top X$ is severely ill-posed, i.e.,  $\lambda(k)\asymp
 \exp(-\kappa k)$ with $\kappa>0$. Then
 \[
 \max_k \lambda(k)h_\alpha(k)\asymp \exp(\kappa/\alpha) \quad \text{and}\quad
 D(\alpha) \asymp \kappa^{-1/2}\exp(\kappa/\alpha).
 \]
 Therefore (\ref{eqn.17}) holds with $C_2 = \kappa^{-1/2}$.

\section{Proofs}

\subsection{Ordered processes and their basic properties}
  The main results in this paper are based on a  general fact which is similar to Dudley's entropy bound (see, e.g., \cite{VW}).
Let $\zeta_t$ be a separable zero mean random  process on $ \mathbb{R}^+$.
Denote for brevity
\[
\Delta^\zeta(t_1,t_2)=\zeta_{t_1}-\zeta_{t_2}.
\]
The following fact (see Lemma 1 in \cite{G1}) plays a cornerstone role in the proof of Proposition \ref{proposition.1} and Theorem \ref{theorem.1}.
\begin{proposition}\label{proposition.2} Let  $v^2_u,\ u\in \mathbb{R}^+$,  be a continuous strictly increasing function with $v^2_0=0$. Then for any $\lambda>0$,
\begin{equation*}
\begin{split}
\log \mathbf{E}\exp\biggl\{\lambda \max_{0\le s\le
t}\frac{\Delta^\zeta(s,t)}{\sigma_t}\biggr\}\le \frac{\log(2)\sqrt{2}}{\sqrt{2}-1}\qquad \qquad\\
\qquad + \max_{0< s'< s\le t}\max_{| z|\le \sqrt{2}/(\sqrt{2}-1)}\log
\mathbf{E}\exp\biggl\{z\lambda
\frac{\Delta^\zeta(s',s)}{\bar{\Delta}^v(s',s)}\biggr\},
\end{split}
\end{equation*}
where
$
\bar{\Delta}^v(s',s)=\sqrt{|v^2_s-v^2_{s'}|}.
$
\end{proposition}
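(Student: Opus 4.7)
The plan is a chaining argument in the natural scale $v_u$, aimed at giving the tighter bound claimed---MGF of the supremum controlled by an additive constant plus the worst MGF of a single normalized increment, rather than an entropy integral. Using that $v^2_u$ is continuous and strictly increasing with $v^2_0=0$, I reparametrize $[0,t]$ by $u_s=v^2_s$, so $\bar{\Delta}^v(s',s)=\sqrt{|u_s-u_{s'}|}$ becomes Euclidean, and separability of $\zeta$ reduces the supremum to a countable maximum. I introduce nested dyadic grids $G_k=\{j v^2_t/2^k : 0\le j\le 2^k\}\subset[0,v^2_t]$ and let $\pi_k(s)\in[0,t]$ be the unique point whose $u$-coordinate is the smallest element of $G_k$ that is $\ge u_s$. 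Then $\pi_0(s)=t$ for every $s$, $\pi_k(s)\to s$ as $k\to\infty$, and the telescoping identity
\begin{equation*}
\Delta^\zeta(s,t)=\sum_{k\ge 0}\bigl[\zeta_{\pi_{k+1}(s)}-\zeta_{\pi_k(s)}\bigr]
\end{equation*}
has links whose $v$-scales satisfy $\bar{\Delta}^v(\pi_{k+1}(s),\pi_k(s))\le v_t\cdot 2^{-k/2}$.

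The analytic heart of the proof is the Jensen-type convex inequality
\begin{equation*}
\exp\Bigl(\lambda\sum_k y_k\Bigr)\le \sum_k w_k \exp(\lambda y_k/w_k),\qquad \sum_k w_k=1,
\end{equation*}
applied with the geometric weights $w_k=(1-2^{-1/2})\cdot 2^{-k/2}$, so that $1/w_k=c\cdot 2^{k/2}$ with $c=\sqrt{2}/(\sqrt{2}-1)$. With $y_k=(\zeta_{\pi_{k+1}(s)}-\zeta_{\pi_k(s)})/\sigma_t$, each rescaled link $\lambda y_k/w_k$ is bounded above by $c\lambda$ times $\Delta^\zeta/\bar{\Delta}^v$ of the corresponding pair, and this multiplier $c\lambda$ falls exactly in the admissible window $|z|\le\sqrt{2}/(\sqrt{2}-1)$ on which the MGF hypothesis is invoked on the right-hand side of the stated bound. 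After taking the maximum over $s$, exchanging with the sum, and applying the MGF bound termwise, the expectation of the exponential maximum is controlled in terms of $\sum_k w_k\cdot\#(\text{pairs at level }k)\cdot M$, where $M$ denotes the worst normalized MGF.

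The main obstacle is that a naive union bound over pairs at each level contributes a factor $|G_k|^2\sim 2^{2k}$ that outgrows the weighted factor $w_k\sim 2^{-k/2}$. Matching the stated bound therefore requires a more delicate peeling: refining the chain via nested coarse-to-fine approximations so that only $O(1)$ new pairs enter at each level, or introducing a two-parameter chain that separately tracks $s'$ and $s$ using the monotonicity of the dyadic grid. Once this combinatorial balance is achieved, the geometric summation $\sum_k w_k$ telescopes to the constant $\sqrt{2}/(\sqrt{2}-1)$, and the logarithm of the number of grid points per level collapses into the numerical prefactor $\log(2)\sqrt{2}/(\sqrt{2}-1)$. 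The remaining checks are verifying that every normalized link stays strictly inside the allowed window $|z|\le\sqrt{2}/(\sqrt{2}-1)$, and that the chain terminates at $\pi_0(s)=t$ so that the telescoping produces exactly $\Delta^\zeta(s,t)$ with no boundary correction; both follow from strict monotonicity of $v^2_u$ and separability of $\zeta$.
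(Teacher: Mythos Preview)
Your chaining framework and choice of geometric weights $w_k=(1-2^{-1/2})2^{-k/2}$ correctly identify the constant $c=\sqrt 2/(\sqrt 2-1)$, but the argument has a real gap at precisely the point you flag, and your proposed fixes do not close it. With the additive convex inequality $\exp(\sum_k y_k)\le\sum_k w_k\exp(y_k/w_k)$, once you push $\max_s$ inside and take expectation, the level-$k$ term carries a combinatorial factor equal to the number of distinct links at that level. Even with the correct count---about $2^k$ parent--child links, not $|G_k|^2\sim 2^{2k}$---the sum $\sum_k w_k\cdot 2^k\asymp\sum_k 2^{k/2}$ diverges. No rearrangement of the dyadic grids can produce $O(1)$ links per level: the chain must branch to cover all of $[0,t]$, so the suggestion of ``only $O(1)$ new pairs per level'' is not realizable, and the ``two-parameter chain'' remains too vague to assess.

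The remedy is to replace the additive Jensen step by H\"older's inequality and to run a recursive bisection rather than a one-shot telescoping chain. Split $[0,t]$ at the $v^2$-midpoint $m$, so that $v_m^2=v_t^2/2$ and $\bar\Delta^v(m,t)=v_t/\sqrt 2=v_m$. Writing $\Phi_\lambda(I)=\mathbf{E}\exp\bigl(\lambda\max_{s\in I}\Delta^\zeta(s,\sup I)/|I|_v\bigr)$ with $|I|_v=\bigl(v_{\sup I}^2-v_{\inf I}^2\bigr)^{1/2}$, the bound $\exp(\max(A,B))\le\exp A+\exp B$ together with H\"older with exponents $p=c$, $q=\sqrt 2$ (note $1/c+1/\sqrt 2=1$) gives
\[
\Phi_\lambda([0,t])\;\le\; M^{1/c}\,\Phi_\lambda([0,m])^{1/\sqrt 2}\;+\;\Phi_\lambda((m,t])^{1/\sqrt 2}
\;\le\; 2M^{1/c}\,A^{1/\sqrt 2},
\]
where $M\ge 1$ is the worst normalized-increment MGF appearing on the right-hand side of the proposition and $A$ is the supremum of $\Phi_\lambda$ over the two half-scale subintervals. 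The H\"older exponent $p=c$ produces the multiplier $z=c/\sqrt 2=1/(\sqrt 2-1)\le c$ on the single increment, landing exactly in the admissible window. Iterating and using $\sum_{k\ge 0}2^{-k/2}=c$ yields $\Phi_\lambda([0,t])\le(2M^{1/c})^c=2^c M$, which is the claimed bound. The paper itself does not prove this proposition here---it is quoted from Lemma~1 of \cite{G1}---but this recursive H\"older argument is what makes the constants close up, and it is structurally different from the additive Jensen route you attempted.
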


\begin{definition}
 A zero mean process $\zeta_t,\ t\in \mathbb{R}^+$ is called ordered if there exists a continuous strictly monotone function
$v^2_t,\ t\in \mathbb{R}^+$ and some $\Lambda > 0$ such that
\begin{equation*}
\sup_{s',s\in \mathbb{R}^+:\,s' \neq s}\mathbf{E}\exp\biggl[ \Lambda \frac{\Delta^\zeta(s',s)}{\bar{\Delta}^v(s',s)}\biggr]<\infty.
\end{equation*}
\end{definition}

The next two propositions (see Lemmas 2 and 3  in \cite{G1}) show that the ordered process $\zeta_t$ can be controlled by the deterministic  function $v_t$.
\begin{proposition}\label{proposition.3}
Let $\zeta_t$ be an ordered process with $\zeta_0=0$. Then there exists a constant
$C(q',q)$   such that for all $1< q',q\le 2$, uniformly in $\free>0$
\begin{equation*}
\mathbf{E}\sup_{t\ge 0}\bigl[\zeta_t-\free v^q_t\bigr]_+^{q'} \le
\frac{C(q',q)}{\free^{q'/(q-1)}},
\end{equation*}
where $[x]_+=\max(0,x)$.
\end{proposition}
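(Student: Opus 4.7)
The plan is to peel the $t$-axis by dyadic shells on which $v_t^2$ doubles, control the supremum of $\zeta$ on each shell via Proposition \ref{proposition.2}, and sum the shell contributions after taking $q'$-th moments. Assume (by reparametrization, if necessary) that $v_t^2$ is strictly increasing from $v_0^2=0$ to $\infty$, and for $k\in\mathbb{Z}$ let $\tau_k$ be the unique point with $v_{\tau_k}^2=2^k$. Set $S_k=[\tau_k,\tau_{k+1}]$ and $M_k=\sup_{t\in S_k}\zeta_t$. Since $v_t^q\ge 2^{kq/2}$ on $S_k$,
\[
\sup_{t\ge 0}[\zeta_t-zv_t^q]_+\le \sup_{k\in\mathbb{Z}}[M_k-z\,2^{kq/2}]_+,
\]
and using $(\sup_k a_k)^{q'}\le \sum_k a_k^{q'}$ for nonnegative terms,
\[
\mathbf{E}\sup_{t\ge 0}[\zeta_t-zv_t^q]_+^{q'}\le \sum_{k\in\mathbb{Z}}\mathbf{E}\bigl[M_k-z\,2^{kq/2}\bigr]_+^{q'}.
\]

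Next I obtain a sub-exponential tail for $M_k$ at scale $2^{(k+1)/2}$. Applying Proposition \ref{proposition.2} both to $\zeta$ and to $-\zeta$ on $[0,\tau_{k+1}]$, and using $\zeta_0=0$ (so that the $s=0$ term in the maximum for the $-\zeta$ version is exactly $\zeta_{\tau_{k+1}}$), I obtain for some constants $\lambda_*,C_*$ depending only on the $\Lambda$ of the ordered-process definition,
\[
\mathbf{E}\exp\bigl[\lambda_*(\max_{s\le\tau_{k+1}}\zeta_s-\zeta_{\tau_{k+1}})/2^{(k+1)/2}\bigr]\le C_*,\qquad \mathbf{E}\exp\bigl[\lambda_*\zeta_{\tau_{k+1}}/2^{(k+1)/2}\bigr]\le C_*.
\]
Cauchy--Schwarz combines these into $\mathbf{E}\exp\bigl[(\lambda_*/2)M_k/2^{(k+1)/2}\bigr]\le C_*$, whence Markov yields $\mathbf{P}(M_k>u)\le C_*\exp(-c_0 u/2^{k/2})$ for all $u\ge 0$ and some $c_0>0$. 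The layer-cake identity followed by the change of variables $u=2^{k/2}w$ then gives
\[
\mathbf{E}\bigl[M_k-z\,2^{kq/2}\bigr]_+^{q'}\le C(q')\,2^{kq'/2}\exp\bigl(-c_0 z\,2^{k(q-1)/2}\bigr).
\]

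The remaining task is to estimate $S(z):=\sum_k 2^{kq'/2}\exp(-c_0 z\,2^{k(q-1)/2})$. The summand is maximized near $k^*=-\frac{2}{q-1}\log_2 z$, where the geometric factor equals $z^{-q'/(q-1)}$. For $k\le k^*$ the exponential is $\Theta(1)$ and $\sum_{k\le k^*}2^{kq'/2}\lesssim 2^{k^*q'/2}$ by geometric summation (using $q'>0$); for $k>k^*$, setting $k=k^*+j$ reduces the tail to $2^{k^*q'/2}\sum_{j\ge 1}2^{jq'/2}\exp(-c_0 2^{j(q-1)/2})<\infty$ since $q>1$. Hence $S(z)\le C(q',q)z^{-q'/(q-1)}$, which is the claim. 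The main obstacle is the increment-to-supremum upgrade: converting the Dudley-type bound of Proposition \ref{proposition.2} (phrased in terms of $\Delta^\zeta$) into a one-sided sub-exponential tail for $\sup_{s\le t}\zeta_s$. This is precisely where the symmetry of the ordered-process hypothesis in $\pm\zeta$ together with the normalization $\zeta_0=v_0=0$ becomes essential; thereafter the tail integration and geometric bookkeeping are routine.
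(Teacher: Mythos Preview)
Your proof is correct. Note that the paper does not actually prove Proposition~\ref{proposition.3}; it simply cites Lemma~2 of \cite{G1}. That said, your argument---geometric peeling on the level sets of $v_t^2$, followed by Proposition~\ref{proposition.2} to obtain a sub-exponential tail for the shell maximum, then summation---is precisely the machinery the present paper deploys in its own proofs of Lemma~\ref{lemma.1}, Lemma~\ref{lemma.4}, and Lemma~\ref{lemma.6}, and is undoubtedly what \cite{G1} does as well. One small simplification: you do not need the $-\zeta$ detour to control $\zeta_{\tau_{k+1}}$. Since $\zeta_0=0$ and (under the hypothesis of Proposition~\ref{proposition.2}) $v_0=0$, the ordered-process condition applied at the pair $(t,0)$ gives directly $\mathbf{E}\exp\bigl[\Lambda\,\zeta_t/v_t\bigr]\le C$, so the endpoint bound is immediate. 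Everything else---the layer-cake integration and the two-sided geometric summation around $k^*$---is carried out correctly.
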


\begin{proposition}\label{proposition.4} Assume that there exists a monotone function $v_t, t\ge 0$ such that  a random process  $\zeta_t, \ t\in \mathbb{R}^+$,
satisfies
\begin{equation*}
\mathbf{E}\sup_{t\ge 0}\bigl[\zeta_t-\free v^q_t\bigr]_+^{q'} \le
\frac{C}{\free^{q'/(q-1)}},
\end{equation*}
for any $\free>0$  and some  $ q'\ge 1$, $q> 1 $.
Then   there exists a constant $C'$ such that for any random variable $\tau\in \mathbb{R}^+$ the following inequality holds
\begin{equation*}
\bigl[\mathbf{E}|\zeta_\tau|^{q'}\bigr]^{1/q'}\le C'\bigl[\mathbf{E}v^{q q'}_\tau\bigr]^{1/(q q')}.
\end{equation*}
\end{proposition}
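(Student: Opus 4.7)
The plan is to exploit the trivial pointwise inequality
\[
[\zeta_\tau]_+ \le \bigl[\zeta_\tau - \free\, v_\tau^q\bigr]_+ + \free\, v_\tau^q \le \sup_{t\ge 0}\bigl[\zeta_t - \free\, v_t^q\bigr]_+ + \free\, v_\tau^q,
\]
valid for every $\free>0$, and then to optimize the free parameter $\free$ after taking $L^{q'}$ norms. The symmetric statement for $[-\zeta_\tau]_+$ follows from applying the hypothesis to $-\zeta$ (or is already implicit in the setup), so controlling $[\zeta_\tau]_+$ controls $|\zeta_\tau|$ up to a factor $2$.

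First I would take $L^{q'}$ norms of the inequality above and use Minkowski's inequality to obtain
\[
\bigl\|[\zeta_\tau]_+\bigr\|_{q'} \le \Bigl\|\sup_{t\ge 0}[\zeta_t-\free v_t^q]_+\Bigr\|_{q'} + \free\,\bigl\|v_\tau^q\bigr\|_{q'}.
\]
The hypothesis directly bounds the first summand by $C^{1/q'}\free^{-1/(q-1)}$, while $\|v_\tau^q\|_{q'}=(\mathbf{E}v_\tau^{qq'})^{1/q'}$. Setting $A=C^{1/q'}$ and $B=(\mathbf{E}v_\tau^{qq'})^{1/q'}$, this reduces the problem to minimizing
\[
\free \mapsto A\free^{-1/(q-1)} + B\free
\]
over $\free>0$. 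Elementary calculus gives the optimal value $\free_\ast = [A/((q-1)B)]^{(q-1)/q}$, for which the minimum equals a constant multiple (depending only on $q$) of $A^{(q-1)/q}B^{1/q}$.

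Substituting back and using $A^{(q-1)/q}=C^{(q-1)/(qq')}$ and $B^{1/q}=(\mathbf{E}v_\tau^{qq'})^{1/(qq')}$ yields
\[
\bigl\|[\zeta_\tau]_+\bigr\|_{q'}\le C'\,\bigl(\mathbf{E}v_\tau^{qq'}\bigr)^{1/(qq')},
\]
which is the announced inequality once we combine with the corresponding bound for $[-\zeta_\tau]_+$. There is no serious obstacle: the argument is essentially a three-line Young-type optimization. The only mild care needed is the legitimacy of taking the supremum over $t\ge 0$ before evaluating at the random point $\tau$, but this is immediate because $\sup_{t\ge 0}[\zeta_t-\free v_t^q]_+ \ge [\zeta_\tau-\free v_\tau^q]_+$ pointwise, and the constant $C'$ depends only on $q$ and on the constant $C$ supplied by the hypothesis.
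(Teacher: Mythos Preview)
The paper does not actually prove Proposition~\ref{proposition.4}; it is quoted from \cite{G1} (Lemma~3 there). Your argument is the standard one and is correct: the pointwise bound $[\zeta_\tau]_+\le\sup_{t\ge0}[\zeta_t-\free v_t^q]_++\free v_\tau^q$, Minkowski in $L^{q'}$, and optimization in $\free$ yield the conclusion with $C'=c_q\,C^{(q-1)/(qq')}$. This is exactly the route taken in \cite{G1}, and the same optimization device is reproduced in the present paper in the proof of Lemma~\ref{lemma.6} for a closely related process.

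The only point deserving comment is the one you yourself flag: the hypothesis is one-sided, so passing from $[\zeta_\tau]_+$ to $|\zeta_\tau|$ genuinely requires the analogous bound for $-\zeta$. This is \emph{not} a formal consequence of the stated hypothesis (take $\zeta_t=-e^t$, $v_t=t$ for a counterexample), so strictly speaking the proposition as written needs the two-sided assumption. In every application in the paper the process is either centred Gaussian or a centred sum of $\xi'^2(k)-1$ terms, and the hypothesis is established for both $\zeta$ and $-\zeta$ simultaneously, so no harm is done; but your parenthetical ``or is already implicit in the setup'' is the honest reading.
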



In what follows, we focus on  typical ordered processes related to  the empirical risk.
The following two propositions (see Lemmas 4 and 5 in \cite{G1}) are
essential in controlling the cross term
\[
\sigma \sum_{k=1}^p \lambda^{-1/2}(k)[2h_\alpha(k)-h_\alpha^2(k)]\xi'(k)\beta(k)
\]
in the case, where $\alpha$ is a random variable depending on $\xi'(k),\, k=1,\ldots,p$.
\begin{proposition}\label{proposition.5}
 For any given  $\bar\alpha> 0$ and any $\free>0$,
\begin{equation*}
\begin{split}
&\mathbf{E}\sup_{0\le \alpha\le \alpha^\circ}\biggl\{\sum_{k=1}^p
\bigl[h_{\bar\alpha}(k)-h_\alpha(k)\bigr] b(k) \xi'(k)
\\&\qquad-\free\biggl[\sum_{k=1}^p
\bigl[h_{\bar\alpha}(k)-h_\alpha(k)\bigr]^2 b^2(k)\biggr]^{q'}\biggr\}_+^{q}
\le \frac{C}{\free^{q/(2q'-1)}}, \quad q'>1/2.
\end{split}
\end{equation*}
\end{proposition}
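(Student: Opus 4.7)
The process $\zeta_\alpha = \sum_{k=1}^p [h_{\bar\alpha}(k) - h_\alpha(k)]\,b(k)\,\xi'(k)$ is centered Gaussian with variance $v_\alpha^2 = \sum_{k=1}^p [h_{\bar\alpha}(k) - h_\alpha(k)]^2 b^2(k)$, and satisfies $\zeta_{\bar\alpha} = 0$, $v_{\bar\alpha} = 0$. My plan is to recognize $\zeta_\alpha$ as an ordered process in the sense of Definition 2 with $v_\alpha$ as the controlling function, and then invoke Proposition 3 with the correspondence $(q, q') \leftrightarrow (2q', q)$; the resulting bound $C/z^{q/(2q'-1)}$ is exactly what is claimed.

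The first step uses the ordered-smoother hypothesis (Definition 1) to partition the parameter set according to pointwise comparison with $h_{\bar\alpha}$:
\[
A_- = \{\alpha : h_\alpha(k) \le h_{\bar\alpha}(k) \text{ for all } k\}, \qquad A_+ = \{\alpha : h_\alpha(k) \ge h_{\bar\alpha}(k) \text{ for all } k\}.
\]
By Definition 1 these two sets cover $(0, \alpha^\circ]$, and on each of them the pointwise order is total; moreover $\bar\alpha$ itself lies on the common boundary and plays the role of the initial point, which is essential because Proposition 3 assumes $\zeta_0 = 0$ with $v_0 = 0$.

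The core algebraic step is the variance inequality. For $\alpha_1, \alpha_2 \in A_-$ with $h_{\alpha_1} \le h_{\alpha_2}$ pointwise (hence $h_{\alpha_1} \le h_{\alpha_2} \le h_{\bar\alpha}$), I compute
\begin{align*}
v_{\alpha_1}^2 - v_{\alpha_2}^2 &= \sum_{k=1}^p [h_{\alpha_2}(k) - h_{\alpha_1}(k)]\,[2h_{\bar\alpha}(k) - h_{\alpha_1}(k) - h_{\alpha_2}(k)]\,b^2(k) \\
&\ge \sum_{k=1}^p [h_{\alpha_2}(k) - h_{\alpha_1}(k)]^2\,b^2(k) = \mathbf{E}[\zeta_{\alpha_1} - \zeta_{\alpha_2}]^2,
\end{align*}
where the inequality uses $2h_{\bar\alpha} - h_{\alpha_1} - h_{\alpha_2} \ge h_{\alpha_2} - h_{\alpha_1} \ge 0$ on $A_-$; the symmetric estimate holds on $A_+$. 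Consequently the Gaussian increment normalized by $\bar\Delta^v(\alpha_1, \alpha_2) = \sqrt{|v_{\alpha_1}^2 - v_{\alpha_2}^2|}$ has variance at most one, so its exponential moments are uniformly bounded and $\zeta_\alpha$ is ordered (Definition 2) on each of $A_\pm$.

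The conclusion follows by applying Proposition 3 separately on $A_-$ and on $A_+$, after reparameterizing by $t = v_\alpha^2$ starting at $\bar\alpha$, with the exponent substitution $q \leftarrow 2q'$ and $q' \leftarrow q$; this produces the factor $z^{-q/(2q'-1)}$ on each side, and the two estimates combine via $\sup_{A_- \cup A_+} \le \sup_{A_-} + \sup_{A_+}$ together with $(a+b)_+^q \le 2^{q-1}(a_+^q + b_+^q)$. The genuine obstacle is the variance inequality above, as it is precisely what forces the Gaussian increments of $\zeta_\alpha$ to be dominated by the natural metric derived from $v_\alpha$; once that estimate is in hand, the reduction to the ordered-process framework, and hence the proof, is essentially mechanical.
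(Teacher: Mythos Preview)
The paper does not prove Proposition~\ref{proposition.5}; it merely cites it as Lemma~4 of \cite{G1}. Your argument is correct and is precisely the proof one would reconstruct from the ordered-process machinery of \cite{G1} that is summarized in Propositions~\ref{proposition.2}--\ref{proposition.4} of the present paper: the variance inequality you write down is the exact analogue of (\ref{eqn.21}) in Lemma~\ref{lemma.1}, and the reduction to Proposition~\ref{proposition.3} via the exponent substitution $q\leftarrow 2q'$, $q'\leftarrow q$ is the intended route.

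Two minor remarks. First, your combining step is slightly roundabout: since $[\,\cdot\,]_+\ge 0$, one has directly $\sup_{A_-\cup A_+}[\,\cdot\,]_+^q\le \sup_{A_-}[\,\cdot\,]_+^q+\sup_{A_+}[\,\cdot\,]_+^q$ and the inequality $(a+b)_+^q\le 2^{q-1}(a_+^q+b_+^q)$ is not needed. Second, Proposition~\ref{proposition.3} as stated here requires the inner exponent to lie in $(1,2]$, so your substitution yields the result only for $q'\in(1/2,1]$ and $q\in(1,2]$; this covers the sole application in the paper ($q'=2/3$, $q=1+\gamma/4$), but you should flag the restriction if you want the statement in the full generality suggested by ``$q'>1/2$''.
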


\begin{proposition}\label{proposition.6} Let  $\bar \alpha$ be a given smoothing parameter. Then for any $p\in [1,2)$,
 there exists a constant $C(p)$ so that for any data-driven smoothing parameter $\hat\alpha$,
\begin{equation*}
\begin{split}
&\mathbf{E} \biggl| \sum_{k=1}^p
\bigl[h_{\hat\alpha}(k)-h_{\bar\alpha}(k)\bigr]\lambda^{-1/2}(k) \beta(k)
\xi'(k)\biggr|^p\\&\quad  \le  C(p)
  \Bigl\{\mathbf{E}\max_{k}  \lambda^{-1}(k)
h_{\hat\alpha}^2(k)\Bigr\}^{p/2}\biggl[ \sum_{k=1}^p
\bigr[1-h_{\bar\alpha}(k)\bigl]^2\beta^2(k)\biggr]^{p/2}
\\&\qquad  +  C(p)\Bigl\{\max_{k}  \lambda^{-1}(k)
h_{\bar\alpha}^2(k)\Bigr\}^{p/2}\biggl[\mathbf{E} \sum_{k=1}^p
\bigr[1-h_{\hat\alpha}(k)\bigl]^2\beta^2(k)\biggr]^{p/2}.
  \end{split}
\end{equation*}
\end{proposition}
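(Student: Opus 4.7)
The plan: recognize the quantity of interest as $\mathbf{E}|\zeta_{\hat\alpha}|^p$ for the Gaussian process
$$
\zeta_\alpha\stackrel{\rm def}{=}\sum_k\bigl[h_\alpha(k)-h_{\bar\alpha}(k)\bigr]\lambda^{-1/2}(k)\beta(k)\xi'(k),
$$
whose variance $v_\alpha^2=\sum_k[h_\alpha(k)-h_{\bar\alpha}(k)]^2\lambda^{-1}(k)\beta^2(k)$ vanishes at $\alpha=\bar\alpha$ and, by the ordered-smoother property (Definition \ref{d1}), is monotone in $\alpha$ on each of the intervals $[0,\bar\alpha]$ and $[\bar\alpha,\alpha^\circ]$. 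Thus $\zeta$ is an ordered process on each half-range and falls into the framework of Propositions \ref{proposition.3}--\ref{proposition.5}.

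On each half-range I would invoke Proposition \ref{proposition.5} with $b(k)=\lambda^{-1/2}(k)\beta(k)$ and the choice $q'=1/p\in(1/2,1]$, $q=p\in[1,2)$. The resulting exponent $q/(2q'-1)=p^2/(2-p)$ is precisely what Proposition \ref{proposition.4} requires with moment exponent $p$ and power $2/p>1$. Applied to the data-driven random variable $\hat\alpha$ and combining the two half-range estimates, this yields
$$
\mathbf{E}|\zeta_{\hat\alpha}|^p\le C\,\bigl(\mathbf{E}v_{\hat\alpha}^2\bigr)^{p/2}.
$$

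Next comes the algebraic step that splits $\mathbf{E}v_{\hat\alpha}^2$ into the two summands of the statement. The elementary inequality $a-b\le a(1-b)$ for $a,b\in[0,1]$ (equivalent to $b(1-a)\ge 0$), together with the dichotomy from Definition \ref{d1} that for every realization of $\hat\alpha$ either $h_{\hat\alpha}(k)\ge h_{\bar\alpha}(k)$ for all $k$ (event $A^+$) or the reverse holds for all $k$ (event $A^-$), gives on $A^+$
$$
\bigl[h_{\hat\alpha}(k)-h_{\bar\alpha}(k)\bigr]^2\lambda^{-1}(k)\le(1-h_{\bar\alpha}(k))^2\,\max_j\bigl[\lambda^{-1}(j)h_{\hat\alpha}^2(j)\bigr],
$$
and on $A^-$ the analogous bound with the roles of $\hat\alpha$ and $\bar\alpha$ exchanged. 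Summing against $\beta^2(k)$, taking expectations, and using that $\sum_k(1-h_{\bar\alpha}(k))^2\beta^2(k)$ and $\max_j\lambda^{-1}(j)h_{\bar\alpha}^2(j)$ are deterministic, we obtain that $\mathbf{E}v_{\hat\alpha}^2$ is bounded by the sum of the two quantities inside the brackets in the statement (without the $p/2$ exponents). The subadditivity $(x+y)^{p/2}\le x^{p/2}+y^{p/2}$, valid for $p\le 2$, applied to $(\mathbf{E}v_{\hat\alpha}^2)^{p/2}$, delivers the claimed two-term inequality.

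The most delicate step I anticipate is the invocation of Proposition \ref{proposition.5} at the borderline $q'=1/p$, which descends from $1$ to $1/2$ as $p$ traverses $[1,2)$: one must check that the constant produced by that proposition remains uniformly bounded on compact subsets of $[1,2)$, and that the two-sided parametrization of $\zeta$ around $\bar\alpha$ is compatible with the one-sided ordered-process framework of Propositions \ref{proposition.4}--\ref{proposition.5}, which requires treating $[0,\bar\alpha]$ and $[\bar\alpha,\alpha^\circ]$ separately and adding the two resulting bounds.
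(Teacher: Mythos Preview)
The paper does not prove Proposition \ref{proposition.6} here; it is quoted as Lemma 5 of \cite{G1}. Your proposal is correct and is essentially the intended argument: Propositions \ref{proposition.5} and \ref{proposition.4} combine (with $q'=1/p$, $q=p$, so that $q/(2q'-1)=q'_{\text{Prop 4}}/(q_{\text{Prop 4}}-1)$ with $q_{\text{Prop 4}}=2/p$, $q'_{\text{Prop 4}}=p$) to give $\mathbf{E}|\zeta_{\hat\alpha}|^p\le C(p)\bigl[\mathbf{E}v_{\hat\alpha}^2\bigr]^{p/2}$, and the pointwise splitting of $v_{\hat\alpha}^2$ on the two ordered events $A^\pm$ via $a-b\le a(1-b)$ is precisely the device the paper itself deploys in the proof of Lemma \ref{lemma.7} (equations (\ref{eqn.41})--(\ref{eqn.42})). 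Your two flagged concerns are harmless: the constant in Proposition \ref{proposition.5} may depend on $q'$, but $p<2$ keeps $q'=1/p>1/2$ strictly so $C(p)<\infty$ for each fixed $p$; and Proposition \ref{proposition.5} already supplies the supremum bound over the full interval $[0,\alpha^\circ]$, so the non-monotonicity of $v_\alpha$ at $\bar\alpha$ is only an issue for the \emph{literal} hypothesis of Proposition \ref{proposition.4}, which your half-range split handles cleanly.
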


In order to obtain oracle inequalities in the case, where the noise variance is unknown, we will need the following lemma generalizing Proposition \ref{proposition.3}.

\begin{lemma}\label{lemma.1} Let
\[
\zeta_\alpha(b) =\sum_{k=1}^p [1-h_\alpha(k)]\xi'(k)b(k),\
v^2_\alpha(b) = \sum_{k=1}^p [1-h_\alpha(k)]^2 b^2(k), \
K=\frac{2}{(\sqrt{2}-1)^2}.
\]
 Then uniformly in $b\in \mathbb{R}^p$
\begin{equation*}
\mathbf{E}\exp\bigl\{ \sup_{\alpha\in \mathbb{R}^+}\bigl[\zeta_\alpha(b)-Kv_\alpha^2(b)\bigr]  \bigr\} \le C.
\end{equation*}
\end{lemma}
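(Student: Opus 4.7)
The plan is to recognize $\zeta_\alpha(b)$ as a centered Gaussian ordered process controlled by the envelope $v_\alpha(b)$, and then to invoke Proposition~\ref{proposition.2} in a dyadic peeling argument over the level sets of $v^2_\alpha(b)$.  I begin by verifying the ordering.  Because $\{h_\alpha\}$ is an ordered family (Definition~\ref{d1}), for any two parameters $\alpha_1,\alpha_2$ one may assume $h_{\alpha_1}(k)\le h_{\alpha_2}(k)$ for every $k$ whenever $v^2_{\alpha_1}(b)\ge v^2_{\alpha_2}(b)$.  A direct expansion gives
\begin{equation*}
\begin{split}
v^2_{\alpha_1}(b)-v^2_{\alpha_2}(b) &= \sum_{k=1}^p [h_{\alpha_2}(k)-h_{\alpha_1}(k)]\,[2-h_{\alpha_1}(k)-h_{\alpha_2}(k)]\, b^2(k),\\
\mathbf{E}[\Delta^\zeta(\alpha_1,\alpha_2)]^2 &= \sum_{k=1}^p [h_{\alpha_2}(k)-h_{\alpha_1}(k)]^2 b^2(k),
\end{split}
\end{equation*}
and since $h_{\alpha_j}(k)\in[0,1]$, the pointwise inequality $h_{\alpha_2}(k)-h_{\alpha_1}(k)\le 2-h_{\alpha_1}(k)-h_{\alpha_2}(k)$ yields $\mathbf{E}[\Delta^\zeta(\alpha_1,\alpha_2)]^2\le [\bar\Delta^v(\alpha_1,\alpha_2)]^2$.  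Consequently $\Delta^\zeta/\bar\Delta^v$ is centered Gaussian with variance at most one, so $\mathbf{E}\exp\{z\Delta^\zeta/\bar\Delta^v\}\le e^{z^2/2}$ for every $z\in\mathbb{R}$.

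Next I apply Proposition~\ref{proposition.2} with $\sigma_t=v_t(b)$.  Inserting the Gaussian MGF bound into the inner supremum and using $z^2/2\le 1/(\sqrt{2}-1)^2$ for $|z|\le \sqrt{2}/(\sqrt{2}-1)$, I get for every $t>0$ and $\lambda>0$,
\begin{equation*}
\log \mathbf{E}\exp\biggl\{\lambda \max_{0\le s\le t}\frac{\zeta_s(b)-\zeta_t(b)}{v_t(b)}\biggr\}\le \frac{\sqrt{2}\log 2}{\sqrt{2}-1}+\frac{\lambda^2}{(\sqrt{2}-1)^2}.
\end{equation*}

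To close the argument I peel the parameter range by dyadic levels of $v^2_\alpha(b)$.  Writing $U=\|b\|^2=\lim_{\alpha\to\infty}v^2_\alpha(b)$, for each integer $j\ge 1$ with $2^{j-1}\le U$ I set $I_j=\{\alpha: 2^{j-1}\le v^2_\alpha(b)\le 2^j\}$ and pick $\alpha_j$ with $v^2_{\alpha_j}(b)=2^j$.  On $I_j$,
\begin{equation*}
\zeta_\alpha(b)-Kv^2_\alpha(b)\le \zeta_{\alpha_j}(b) + \max_{\alpha'\le \alpha_j}[\zeta_{\alpha'}(b)-\zeta_{\alpha_j}(b)] - K\cdot 2^{j-1}.
\end{equation*}
The first term on the right is centered Gaussian of variance $2^j$, while the MGF of the second, rescaled by $v_{\alpha_j}(b)=2^{j/2}$, is controlled by the preceding display.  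Combining them via H\"older's inequality with suitably chosen conjugate exponents, summing $\exp(\max_j X_j)\le \sum_j \exp(X_j)$ over $j$, and absorbing the residual region $v^2_\alpha(b)\le 1$ by a single application of Proposition~\ref{proposition.2} at a point with $v^2_t(b)=1$ produces a geometrically convergent series bounded by a universal constant.

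The hard part will be the H\"older optimization in the final step: the conjugate exponents have to be tuned so that the combined exponent in the $j$-th dyadic term is strictly negative.  This is precisely where the specific value $K=2/(\sqrt{2}-1)^2$ is forced, matching the $1/(\sqrt{2}-1)^2$ factor in Proposition~\ref{proposition.2}.  Once the balance is secured the geometric series is uniformly summable in $b\in\mathbb{R}^p$ and the claim follows.
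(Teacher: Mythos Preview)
Your verification of the increment bound and the Gaussian MGF control are correct and essentially match the paper.  The peeling architecture is also the right idea.  The genuine gap is in the last step.

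First, observe that your decomposition is tautological: $\zeta_{\alpha_j}+\max_{\alpha'\le\alpha_j}[\zeta_{\alpha'}-\zeta_{\alpha_j}]=\max_{\alpha'\le\alpha_j}\zeta_{\alpha'}$.  So what you really need is a bound on $\mathbf{E}\exp\{\max_{\alpha'\le\alpha_j}\zeta_{\alpha'}\}$.  Treating $\zeta_{\alpha_j}$ and $\max_{\alpha'\le\alpha_j}[\zeta_{\alpha'}-\zeta_{\alpha_j}]$ as two separate pieces and combining them by H\"older is inherently lossy, and a direct calculation shows it cannot succeed with the stated constant.  Indeed, with conjugate exponents $p,q>1$ one gets the exponent
\[
2^{j-1}\Bigl[p+\tfrac{2q}{(\sqrt{2}-1)^2}-K\Bigr],
\]
and since $q>1$ already makes $2q/(\sqrt{2}-1)^2>K=2/(\sqrt{2}-1)^2$, no choice of $p,q$ can make this negative.  (Minimizing $p+cq$ over $1/p+1/q=1$ gives $(1+\sqrt{c})^2$, which here is roughly $19.5$, well above $K\approx 11.7$.)

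The paper avoids H\"older altogether by exploiting $\zeta_0=0$.  Reparametrize by $w=v^2_{\alpha_j}-v^2_\alpha$ so that the anchor $t$ in Proposition~\ref{proposition.2} corresponds to $\alpha=0$; then $\max_{0\le s\le t}\Delta^\zeta(s,t)/\sigma_t$ becomes exactly $\max_{\alpha\le\alpha_j}\zeta_\alpha/v_{\alpha_j}$, and Proposition~\ref{proposition.2} with $\lambda=v_{\alpha_j}$ gives directly
\[
\mathbf{E}\exp\Bigl\{\max_{\alpha\le\alpha_j}\zeta_\alpha\Bigr\}\le C\exp\Bigl\{v^2_{\alpha_j}/(\sqrt{2}-1)^2\Bigr\}.
\]
This is the missing idea.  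A second, smaller issue: with this sharp bound the dyadic ratio $2$ is exactly critical (the $j$-th exponent becomes $0$), so the sum over $j$ diverges.  The paper therefore peels with ratio $1+\epsilon$ for $\epsilon<1$ (it takes $\epsilon=1/2$), which makes the exponent strictly negative and the series geometrically convergent.
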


\begin{proof}
Since $h_\alpha(\cdot), \alpha \ge 0$, is the family of ordered functions,  it is not difficult to check that
\begin{equation}\label{eqn.21}
\mathbf{E}[\zeta_{\alpha'}(b)-\zeta_\alpha(b)]^2 \le |v^2_{\alpha'}(b)-v^2_{\alpha}(b)|.
\end{equation}
Indeed, we can rewrite (\ref{eqn.21}) in the following equivalent form
\begin{equation*}
\mathbf{E}\zeta_{\alpha'}(b)\zeta_\alpha(b)\ge \min\Bigl\{
v^2_{\alpha'}(b),v^2_{\alpha}(b)  \Bigr\}.
\end{equation*}
Assume  for definiteness that $h_\alpha(k)\ge h_{\alpha'}(k),\, k=1,2,\ldots,p$. Then    $1-h_\alpha(k)\le 1-h_{\alpha'}(k),\, k=1,2,\ldots,p$,
and we get
\begin{equation*}
\begin{split}
\mathbf{E}\zeta_{\alpha'}(b)\zeta_\alpha(b) = \sum_{k=1}^p [1-h_\alpha(k)] [1-h_{\alpha'}(k)] b^2(k)\\
\ge \sum_{k=1}^p  [1-h_{\alpha}(k)]^2 b^2(k)
=v_{\alpha}^2(b),
\end{split}
\end{equation*}
thus proving (\ref{eqn.21}).

Since $\zeta_\alpha(b)$ is a Gaussian process, we obtain
by (\ref{eqn.21})
\begin{equation}\label{eqn.22}
\log \mathbf{E}\exp\biggl\{\lambda \frac{\zeta_{\alpha'}(b)-\zeta_\alpha(b)}{\sqrt{|v^2_{\alpha'}(b)-v^2_{\alpha}(b)|}}\biggr\}\le \frac{\lambda^2}{2}.
\end{equation}

We may assume without loss of generality that $\sigma_\alpha$ is a continuous function in $\alpha\in \mathbb{R}^+$. Then let us fix some $\epsilon>0$ and define  $\alpha_l\in \mathbb{R}^+$ as roots of
equations
\[
v^2_{\alpha_l}(b)=(1+\epsilon)^{l}, \ l\ge 0.
\]
 Since $v_\alpha^2(b)\le \sum_{k=1}^p b^2(k)$,
the set of $\alpha_l$ is always finite but it may be  empty.

Let $\alpha^*$ be a root of the equation $v_{\alpha^*}^2(b)=1$.
Then by Proposition \ref{proposition.2} and (\ref{eqn.22}) we
obtain\vadjust{\eject}
\begin{equation*}
\begin{split}
&\mathbf{E}\exp\Bigl\{\max_{\alpha\in \mathbb{R}^+}\bigl[\zeta_\alpha(b) -Kv_\alpha^2(b)\bigr]\Bigr\}\\&\quad  \le \mathbf{E}\exp\Bigl\{\max_{\alpha>\alpha^* }\bigl[\zeta_\alpha(b) -Kv_\alpha^2(b)\bigr]\Bigr\} +\mathbf{E}\exp\Bigl\{\max_{\alpha\le \alpha^*}\bigl[\zeta_\alpha(b) -Kv_\alpha^2(b)\bigr]\Bigr\}
\\ &\quad \le \mathbf{E}\exp\Bigl\{\max_{\alpha\le \alpha^*}\zeta_\alpha(b) \Bigr\}+\sum_{l\ge 0}
\mathbf{E}\exp\Bigl\{\max_{\alpha_{l}<\alpha\le \alpha_{l+1}}\bigl[\zeta_\alpha(b) -Kv^2_{\alpha_{l-1}}(b)\bigr]\Bigr\}\\
 &\quad\le C+\sum_{l\ge 0}
\mathbf{E}\exp\biggl\{\max_{0<\alpha\le \alpha_{l}}\biggl[v_{\alpha_l}(b)\frac{\zeta_\alpha(b)} {v_{\alpha_l}(b)}
-Kv^2_{\alpha_{l-1}}(b)\biggr]\biggr\}\\
&\quad \le C+C \sum_{l\ge 0}
\exp\biggl\{\biggl[\frac{v_{\alpha_l}^2(b)} {(\sqrt{2}-1)^2}
-Kv^2_{\alpha_{l-1}}(b)\biggr]\biggr\}\\
 &\quad \le C+C\sum_{l\ge 0}
\exp\biggl\{\biggl[(1+\epsilon)^{l}\biggl(\frac{1} {(\sqrt{2}-1)^2}
-\frac{K}{1+\epsilon}\biggr)\biggr]\biggr\}\\
&\quad = C+C\sum_{l\ge 0}
\exp\biggl\{-(1+\epsilon)^{l-1}\frac{1-\epsilon} {(\sqrt{2}-1)^2}
\biggr\}.
\end{split}
\end{equation*}
This equation with $\epsilon =0.5$ completes the proof. 
\end{proof}

\subsection{Recovering the noise variance}
In this section, we focus on basic probabilistic properties of    the variance estimator
\[
\hat\sigma^2_{\hat\alpha} =\frac{\|Y-X\hat\beta_{\hat\alpha}\|^2}{\|1-H_{\hat\alpha}\|^2},
\]
in the case, where $\hat\alpha$ is a data-driven smoothing parameter.
We begin with a simple auxiliary fact.

\begin{lemma}\label{lemma.2}
  Let  $\eta'$ and  $\eta$ be nonnegative random variables. Then   the following inequality
\begin{align}\label{eqn.23}
\begin{split}
\mathbf{E}\eta'\eta^q \le\, &\frac{ 2^{q-1} \lambda^q }{(2-q)^q} \mathbf{E}\eta' \log^q
\biggl\{1+\frac{\eta'}{\mathbf{E}\eta'}\biggr\}\\
& {}+\frac{ 2^{q-1} \lambda^q}{(2-q)^q} \mathbf{E}\eta' \log^q \biggl\{1+\mathbf{E}
\biggl[\exp\biggl(\frac{\eta}{\lambda}\biggr)
-\frac{\eta}{\lambda}-1\biggr]\biggr\}
+q \lambda^q  \mathbf{E}\eta'
\end{split}
\end{align}
holds for  any $\lambda>0$ and $ q\in (1,2)$.
\end{lemma}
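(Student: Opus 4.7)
The plan is to first establish a pointwise inequality controlling $\eta$ by logarithms of itself and of the exponential remainder, and then to decouple $\eta'$ from $\eta$ using Young's inequality and a Gibbs variational identity. First I would observe that writing $e^{\eta/\lambda}=1+\eta/\lambda+A$ with $A=e^{\eta/\lambda}-\eta/\lambda-1\ge 0$ and factoring as $e^{\eta/\lambda}=(1+\eta/\lambda)(1+A/(1+\eta/\lambda))$, one takes logarithms and uses $A/(1+\eta/\lambda)\le A$ to obtain
\[\eta\le\lambda\log(1+\eta/\lambda)+\lambda\log(1+A).\]
Raising to the $q$-th power via $(a+b)^q\le 2^{q-1}(a^q+b^q)$, multiplying by $\eta'$, and taking expectation splits $\mathbf{E}\eta'\eta^q$ into two pieces involving $\mathbf{E}\eta'\log^q(1+\eta/\lambda)$ and $\mathbf{E}\eta'\log^q(1+A)$.

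To treat the second piece I would set $\rho=\eta'/\mathbf{E}\eta'$ and apply the Gibbs variational (Donsker--Varadhan) inequality $\mathbf{E}[\rho X]\le\log\mathbf{E}e^X+\mathbf{E}[\rho\log\rho]$ with $X=\log(1+A)$, obtaining $\mathbf{E}[\rho\log(1+A)]\le\log(1+\mathbf{E}A)+\mathbf{E}[\rho\log(1+\rho)]$ after using $\rho\log\rho\le\rho\log(1+\rho)$. For the first piece I would swap $\eta/\lambda$ for $\eta'/\mathbf{E}\eta'$ inside the logarithm via Young's inequality with the complementary pair $(1+a)\log(1+a)-a$ and $e^b-b-1$ (applied to $a=\eta'/\mathbf{E}\eta'$ and $b=\log(1+\eta/\lambda)$, so that $e^b-b-1=\eta/\lambda-\log(1+\eta/\lambda)$). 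In both pieces the resulting estimate lives at the $\log$ level and has to be lifted to $\log^q$; this I would do by H\"{o}lder interpolation with the conjugate exponents $1/(2-q)$ and $1/(q-1)$ associated with the splitting $q=(2-q)\cdot 1+(q-1)\cdot 2$, which is precisely where the $(2-q)^{-q}$ prefactor in (\ref{eqn.23}) is produced.

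The main obstacle is this H\"{o}lder interpolation: tracking the coefficient $(2-q)^{-q}$ through the convexity and interpolation steps is delicate, and the inevitable blow-up as $q\to 2$ genuinely reflects the limitation of the approach (matching the restriction $q\in(1,2)$ imposed in the statement). The residual $q\lambda^q\mathbf{E}\eta'$ on the right-hand side of (\ref{eqn.23}) would absorb the lower-order terms generated by Young's inequality and by the interpolation step.
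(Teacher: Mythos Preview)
Your outline has a genuine gap at the ``lifting'' step. After the pointwise bound and the power-$q$ convexity inequality you arrive at
\[
\mathbf{E}\eta'\eta^q \le 2^{q-1}\lambda^q\Bigl[\mathbf{E}\eta'\log^q(1+\eta/\lambda) + \mathbf{E}\eta'\log^q(1+A)\Bigr],
\]
and both terms on the right still couple $\eta'$ with a function of $\eta$. The decoupling tools you invoke---Donsker--Varadhan and Young's inequality for the pair $\bigl((1+a)\log(1+a)-a,\ e^b-b-1\bigr)$---are \emph{linear} in the logarithm: they control $\mathbf{E}\rho\log(1+A)$ and $\mathbf{E}\rho\log(1+\eta/\lambda)$, not the $\log^q$ versions. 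Your proposed H\"older interpolation, $\mathbf{E}_\rho Z^q \le (\mathbf{E}_\rho Z)^{2-q}(\mathbf{E}_\rho Z^2)^{q-1}$ with $Z=\log(1+A)$ and $\rho=\eta'/\mathbf{E}\eta'$, then requires a bound on $\mathbf{E}_\rho Z^2$, which you never produce and for which no Gibbs-type identity is available. Without the second endpoint the interpolation cannot close, and the residual $q\lambda^q\mathbf{E}\eta'$ cannot absorb the missing term because it is not lower order.

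The paper avoids this by applying Legendre duality directly at the level of $x^q$ rather than $x$. It sets
\[
F(z,y)=\max_{x\ge 0}\bigl\{x^qy - z(e^x-1-x)\bigr\}
\]
and shows, by analyzing the maximizer via the equation $x_*=\log\bigl(1+qy x_*^{q-1}/z\bigr)$ and a linearization of $\log$, that $x_*\le (2-q)^{-1}\log(1+qy/z)$, hence $F(z,y)\le (2-q)^{-q}\,y\log^q(1+qy/z)$. This is precisely where the factor $(2-q)^{-q}$ enters---through the location of the maximizer, not through interpolation. The pointwise inequality $\eta'(\eta/\lambda)^q \le F(z,\eta') + z\bigl(e^{\eta/\lambda}-1-\eta/\lambda\bigr)$ is then averaged, $z$ is chosen as $q\mathbf{E}\eta'/\mathbf{E}A$ to balance the two pieces, and the elementary splitting $\log^q(1+xy)\le 2^{q-1}[\log^q(1+x)+\log^q(1+y)]$ finishes the proof. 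So the decoupling of $\eta'$ from $\eta$ is effected by the Fenchel dual of $e^x-1-x$ against $x\mapsto x^qy$, and no separate lift from $\log$ to $\log^q$ is needed.
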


\begin{proof}
 Consider the following function
\[
F(z,y)\stackrel{\rm def}{=}\max_{x\ge 0}\Bigl\{x^qy-z[\exp(x)-1-x]\Bigr\}.
\]
Differentiating $x^qy-z[\exp(x)-1-x]$ in $x$,
it is easy to check that
\[
F(z,y)=x_*^qy-z[\exp(x_*)-1-x_*]\le x_*^qy,
\]
where $x_*$ is a root of the equation
\[
 x_*=\log\biggl(1+\frac{qy x_*^{q-1}}{z}\biggr).
\]
Since $\log(x)$ is  convex, it is clear
\[
\log\biggl(1+\frac{qy x_*^{q-1}}{z}\biggr)\le
\log \biggl(1+\frac{qy}{z}\biggr)+\biggl(1+\frac{qy}{z}\biggr)^{-1}
\frac{q(q-1)y}{z}(x^*-1).
\]
Therefore  $x_*\le x^*$, where $x^*$ is a root of the following linear equation
\[
x^*= \log \biggl(1+\frac{qy}{z}\biggr)+\biggl(1+\frac{qy}{z}\biggr)^{-1}
\frac{q(q-1)y}{z}(x^*-1).
\]
Since $q>1$, with a little algebra we get
\[
x^*\le  \biggl(1+\frac{qy}{z}\biggr)\biggl[1+\frac{q(2-q)y}{z}\biggr]^{-1}
 \log \biggl(1+\frac{qy}{z}\biggr)\le \frac{1}{2-q} \log \biggl(1+\frac{qy}{z}\biggr),
\]
thus arriving at the following upper bound
\[
F(z,y)\le \frac{y}{(2-q)^q} \log^q \biggl(1+\frac{qy}{z}\biggr).
\]

Now we are in a position to finish the proof. Notice
that for any $\lambda >0$
\begin{equation*}
\begin{split}
&\eta' \biggl(\frac{\eta}{\lambda}\biggr)^q -z\biggl[\exp\biggl(\frac{\eta}{\lambda}\biggr)
-1-\frac{\eta}{\lambda}\biggr]\\ &\quad \le \max_{x\ge 0}\biggl\{\eta' \biggl(\frac{x}{\lambda}\biggr)^q -z\biggl[\exp\biggl(\frac{x}{\lambda}\biggr)-1-\frac{x}{\lambda}\biggr]\biggr\}=F(z,\eta'),
\end{split}
\end{equation*}
and therefore
\begin{equation*}
\begin{split}
\mathbf{E}\eta'\eta^q \le\,& \lambda^q \biggl\{\mathbf{E}F(z,\eta')
+z\mathbf{E}\biggl[\exp\biggl(\frac{\eta}{\lambda}\biggr)
-1-\frac{\eta}{\lambda}\biggr]\biggr\}
\\ \le \,&\lambda^q \biggl\{\frac{1}{(2-q)^q}\mathbf{E}\eta'
 \log^q \biggl(1+\frac{q\eta'}{z}\biggr)+
z\mathbf{E}\biggl[\exp\biggl(\frac{\eta}{\lambda}\biggr)
-1-\frac{\eta}{\lambda}\biggr]\biggr\}.
\end{split}
\end{equation*}
Next, substituting in the above equation
\[
z=q \mathbf{E} \eta'  \biggr\{ \mathbf{E}\biggl[\exp\biggl(\frac{\eta}{\lambda}\biggr)
-1-\frac{\eta}{\lambda}\biggr]\biggr\}^{-1},
\]
we obtain
\begin{equation}\label{eqn.24}
\begin{split}
\mathbf{E}\eta'\eta^q  \le \lambda^q \biggl\{\frac{1}{(2-q)^q}\mathbf{E}\eta'
 \log^q \biggl(1+\frac{\eta'}{\mathbf{E}\eta'}\mathbf{E}\biggl[\exp\biggl(\frac{\eta}{\lambda}\biggr)
-1-\frac{\eta}{\lambda}\biggr]\biggr)+
q\mathbf{E}\eta'\biggr\}.
\end{split}
\end{equation}
Finally, applying  the following  inequality
\begin{equation*}
\begin{split}
\log^q(1+xy)\le [\log(1+y)+\log(1+x)]^q \le 2^{q-1}\log^{q
}(1+x)\\{}+2^{q-1}\log^q(1+y),\ x,y>0,
\end{split}
\end{equation*}
we get
\begin{equation*}
\begin{split}
\mathbf{E}\eta'
 \log^q \biggl\{1+\frac{\eta'}{\mathbf{E}\eta'}\mathbf{E}\biggl[\exp\biggl(\frac{\eta}{\lambda}\biggr)
-1-\frac{\eta}{\lambda}\biggr]\biggr\}
\quad \le  2^{q-1}\mathbf{E}\eta'
 \log^q \biggl(1+\frac{\eta'}{\mathbf{E}\eta'}\biggr)\\{}+
 2^{q-1}
 \log^q \biggl\{1+\mathbf{E}\biggl[\exp\biggl(\frac{\eta}{\lambda}\biggr)
-1-\frac{\eta}{\lambda}\biggr]\biggr\},
\end{split}
\end{equation*}
and combining this equation with (\ref{eqn.24}), we finish the proof of (\ref{eqn.23}).
\end{proof}

\begin{lemma}\label{lemma.3} Let $\eta$ be a  nonnegative sub-Gaussian random variable, i.e., such that for all
$\lambda >0$ and some $S>0$
\begin{equation}\label{eqn.25}
\mathbf{E}\exp(\eta/\lambda)\le C\exp(S^2/\lambda^2).
\end{equation}
Then for any $q\in [1,2)$
\begin{equation}\label{eqn.26}
\Bigl[\mathbf{E}\eta'^q\eta^q\Bigr]^{1/q}\le \frac{CS}{2-q}\biggl[ \mathbf{E}\eta'^q\log^q\biggl(1+\frac{\eta'^q}
{\mathbf{E}\eta'^q}\biggr)\biggr]^{1/q}.
\end{equation}
\end{lemma}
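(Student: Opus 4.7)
The natural strategy is to invoke Lemma \ref{lemma.2} with $\eta'$ replaced by the nonnegative random variable $\eta'^q$. This substitution is legitimate (Lemma \ref{lemma.2} requires only nonnegativity), and it immediately produces the expression $\mathbf{E}\eta'^q \log^q(1+\eta'^q/\mathbf{E}\eta'^q)$ that appears on the right-hand side of \eqref{eqn.26}. The hypothesis \eqref{eqn.25} is tailor-made for the middle term of Lemma \ref{lemma.2}, since that term involves the exponential moment of $\eta$.

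Concretely, my plan is to apply Lemma \ref{lemma.2} with the choice $\lambda = S$ (any fixed multiple of $S$ would work equally well). The sub-Gaussianity gives
\[
\mathbf{E}\Bigl[\exp(\eta/\lambda)-\eta/\lambda-1\Bigr] \le \mathbf{E}\exp(\eta/\lambda)\le C\exp(S^2/\lambda^2) = C e,
\]
so $\log^q\bigl\{1+\mathbf{E}[\exp(\eta/\lambda)-\eta/\lambda-1]\bigr\}$ is bounded by an absolute constant. Consequently the three terms on the right of Lemma \ref{lemma.2} (after the substitution $\eta'\mapsto\eta'^q$) take the form
\[
\mathbf{E}\eta'^q\eta^q \;\le\; \frac{CS^q}{(2-q)^q}\mathbf{E}\eta'^q\log^q\!\Bigl(1+\tfrac{\eta'^q}{\mathbf{E}\eta'^q}\Bigr) \;+\; \frac{CS^q}{(2-q)^q}\mathbf{E}\eta'^q \;+\; CS^q\mathbf{E}\eta'^q .
\]

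The only remaining step, which I expect to be the one true piece of technical work, is to absorb the last two terms into the first. That amounts to showing a universal lower bound
\[
\mathbf{E}\bigl[Y\log^q(1+Y)\bigr] \ge c_q > 0 \qquad \text{for every } Y\ge 0 \text{ with } \mathbf{E}Y=1,
\]
applied to $Y=\eta'^q/\mathbf{E}\eta'^q$. I would prove this bound by a simple dichotomy: either $\Pr(Y\ge 1/2)\ge 1/2$, in which case the expectation is at least $\tfrac14 \log^q(3/2)$; or $\Pr(Y< 1/2)> 1/2$, in which case $\mathbf{E}[Y\mathbf{1}_{Y\ge 1/2}]>1/2$ forces $\mathbf{E}[Y\log^q(1+Y)]\ge \tfrac12 \log^q(3/2)$. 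Either way a strictly positive constant is obtained, uniformly in $q\in[1,2)$.

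Plugging this lower bound back to absorb the last two terms yields
\[
\mathbf{E}\eta'^q\eta^q \;\le\; \frac{C S^q}{(2-q)^q}\,\mathbf{E}\eta'^q\log^q\!\Bigl(1+\tfrac{\eta'^q}{\mathbf{E}\eta'^q}\Bigr),
\]
since the factor $(2-q)^{-q}\ge 1$ dominates the $q$-dependence of the $CS^q$ term as well. Taking $q$-th roots gives exactly \eqref{eqn.26}. The main obstacle is genuinely just the universal lower bound on $\mathbf{E}[Y\log^q(1+Y)]$; everything else is bookkeeping and a judicious choice of $\lambda$ in Lemma \ref{lemma.2}.
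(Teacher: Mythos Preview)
Your proposal is correct and follows essentially the same route as the paper: apply Lemma~\ref{lemma.2} with $\eta'$ replaced by $\eta'^q$ and $\lambda=S$, then absorb the residual $\mathbf{E}\eta'^q$ terms via a universal lower bound on $\mathbf{E}[Y\log^q(1+Y)]$ when $\mathbf{E}Y=1$. The only difference is in that last step: instead of your dichotomy, the paper observes that $x\mapsto x\log^q(1+x)$ is convex on $[0,\infty)$ (its derivative $\log^q(1+x)+qx(1+x)^{-1}\log^{q-1}(1+x)$ is increasing), so Jensen's inequality gives directly $\mathbf{E}[Y\log^q(1+Y)]\ge \log^q(2)$.
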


\begin{proof}
 Replacing  $\eta'$ in (\ref{eqn.23}) by $ \eta'^q$ and substituting (\ref{eqn.25}) in (\ref{eqn.23}), we get with $\lambda=S$
\begin{equation*}
\begin{split}
\mathbf{E}\eta'^q\eta^q
\le \frac{ 2^{q-1} S^q }{(2-q)^q} \mathbf{E}\eta'^q \log^q \biggl[1+\frac{\eta'^q}{\mathbf{E}\eta'^q}\biggr]
+\frac{ 2^{q-1} S^q}{(2-q)^q} \mathbf{E}\eta'^q
+q S^q  \mathbf{E}\eta'^q.
\end{split}
\end{equation*}
Let $F(x)=x\log^q(1+x)$. It is clear
that
\[
F'(x)=\log^{q}(1+x)+\frac{qx\log^{q-1}(1+x)}{1+x}
\]
is increasing in $x$ and therefore $F(x)$ is convex. Therefore by Jensen's inequality
\[
\mathbf{E}\eta'^q \log^q \biggl[1+\frac{\eta'^q}{\mathbf{E}\eta'^q}\biggr]
\ge \log(2)\mathbf{E}\eta'^q,
\]
and thus,  we arrive at (\ref{eqn.26}).  
\end{proof}

\begin{lemma}\label{lemma.4} Let
\[
\zeta_\alpha =\sum_{k=1}^p
\bigl[1-h_\alpha(k)\bigr]^2[1-\xi'^2(k)]
\]
and
\[
\Sigma_\alpha= 2 \|(1-h_\alpha)^2\|\sqrt{\log\log \frac{ \|(1-h_{\alpha^{\circ}})^2\|^2\exp(2)}{ \|(1-h_\alpha)^2\|^2}}.
\]
Then for any $s\in (1,2]$,
\begin{equation*}
\mathbf{P}\biggl\{\sup_{\alpha\le \alpha^{\circ}}\frac{\zeta_\alpha-s\Sigma_\alpha}{\|(1-h_\alpha)^2\|}\ge x\biggr\}\le \frac{C}{(s-1)^{3}}\exp\biggl\{-\frac{(3-s)^2x^2}{16}\biggr\}.
\end{equation*}
\end{lemma}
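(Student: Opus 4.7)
My plan is to combine a one-sided sub-Gaussian bound for each $\zeta_\alpha$ with a dyadic peeling argument over the level sets of $v_\alpha := \|(1-h_\alpha)^2\|$. Writing $L_\alpha := \log\log(e^2 v_{\alpha^\circ}^2/v_\alpha^2)$ so that $\Sigma_\alpha = 2 v_\alpha \sqrt{L_\alpha}$, I would first establish the pointwise exponential bound: each $\xi'(k)^2-1$ is a centered $\chi^2_1$ variable, and the elementary inequality $\lambda-\tfrac12 \log(1+2\lambda)\le \lambda^2$ gives $\mathbf{E}\exp(\lambda\zeta_\alpha)\le \exp(\lambda^2 v_\alpha^2)$ for all $\lambda\ge 0$. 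Minimising the Chebyshev bound in $\lambda$ then yields the pointwise tail $\mathbf{P}(\zeta_\alpha \ge s\Sigma_\alpha + x v_\alpha)\le \exp(-s^2 L_\alpha - s x\sqrt{L_\alpha} - x^2/4)$, coming from the expansion $(2s\sqrt{L_\alpha}+x)^2/4 = s^2 L_\alpha + sx\sqrt{L_\alpha} + x^2/4$.

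Second, I would verify that $\zeta_\alpha$ is an ordered process with variance function $v_\alpha^2$. For $\alpha_1\le\alpha_2$ the weights $a_{k,i}:=(1-h_{\alpha_i}(k))^2$ are coordinate-wise monotone by the ordering of the family $\{h_\alpha\}$, so $\mathbf{E}(\zeta_{\alpha_1}-\zeta_{\alpha_2})^2 = 2\sum_k (a_{k,1}-a_{k,2})^2 \le 2|v_{\alpha_1}^2 - v_{\alpha_2}^2|$ via $|a-b|^2\le |a^2-b^2|$ valid for $a,b\ge 0$. This puts Proposition \ref{proposition.2} at our disposal for controlling the supremum over shells on which $v_\alpha$ varies only in a bounded range.

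The peeling step partitions $(0,\alpha^\circ]$ into shells $\Pi_l = \{\alpha:\, 2^{-l-1} v_{\alpha^\circ}^2 < v_\alpha^2 \le 2^{-l} v_{\alpha^\circ}^2\}$, on which $L_\alpha\ge \log(l\log 2+2)$ and $v_\alpha$ varies by at most a factor $\sqrt 2$. Applying Proposition \ref{proposition.2} on each $\Pi_l$ upgrades the pointwise tail to a tail on the supremum at the cost of a bounded multiplicative constant; transferring a small share of the Gaussian coefficient from $x^2/4$ to absorb the chaining slack produces a within-peel estimate of the form $C(s-1)^{-2}(l+2)^{-s^2}\exp(-(3-s)^2 x^2/16)$. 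The lemma then follows by summing over $l\ge 0$ and using $\sum_l (l+2)^{-s^2}\lesssim (s-1)^{-1}$ for $s$ near $1$, which combines with the within-peel $(s-1)^{-2}$ to give the advertised $(s-1)^{-3}$ prefactor.

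The main obstacle is the exponent bookkeeping in the peeling step: the quadratic budget $(2s\sqrt{L_\alpha}+x)^2$ must be split into three pieces — one feeding the LIL factor $s^2 L_\alpha$ that ensures summability in $l$, one absorbing the Proposition \ref{proposition.2} chaining slack on $\Pi_l$, and one preserving a strictly positive quadratic tail $(3-s)^2 x^2/16$ uniformly in $s\in(1,2]$. Carrying this out so that all three constants close requires choosing the geometric ratio of the peeling, the $s$-dependent chaining threshold, and the redistribution parameter in a coordinated way; the loss of $s^2$ to $(3-s)^2/4$ in the Gaussian coefficient is precisely the price of the chaining plus the peel-indexed union bound.
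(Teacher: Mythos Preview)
Your strategy---one-sided sub-Gaussian bound for $\zeta_\alpha$, the ordered-increment inequality $\mathbf{E}(\zeta_{\alpha_1}-\zeta_{\alpha_2})^2\le 2|v_{\alpha_1}^2-v_{\alpha_2}^2|$, geometric peeling in $v_\alpha^2$, and Proposition~\ref{proposition.2} to control the supremum of increments on each shell---is exactly the paper's. The paper splits $\sup_{\alpha\in[\alpha_{k+1},\alpha_k]}\zeta_\alpha$ into an anchor $\zeta_{\alpha_{k+1}}$ receiving $(1-f)G_{k+1}$ of the budget and an increment receiving $fG_{k+1}$, bounds the first by Chebyshev and the second by Proposition~\ref{proposition.2}, and then balances the two exponents.

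The one place where your sketch does not close is the \emph{fixed} dyadic ratio. With shells on which $v_\alpha^2$ halves, the increment $\zeta_\alpha-\zeta_{\alpha_{l+1}}$ has sub-Gaussian scale $\sqrt{v_{\alpha_l}^2-v_{\alpha_{l+1}}^2}=v_{\alpha_{l+1}}$, the \emph{same} as the anchor. To make the anchor coefficient reach $(3-s)^2/16$ (which tends to $1/4$ as $s\downarrow 1$) you must send the increment share $\theta\to 0$; but then the increment tail on shell $l$ is of order $\exp\bigl(-c\,\theta^2 s^2 L_{\alpha_{l+1}}\bigr)\asymp (l+2)^{-c(s-1)^2}$, which is not summable in $l$ for $s$ near $1$. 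So the bookkeeping you flag as ``the main obstacle'' does not close with ratio $2$.

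The paper's resolution---which your last paragraph gestures at---is to make the peeling ratio itself $s$-dependent: take shells with $v_{\alpha_k}^2=(1+\epsilon)^{-k}v_{\alpha^\circ}^2$ and choose $\epsilon$ so that the balancing fraction satisfies $f(\epsilon)=(s-1)/2$; this gives $\epsilon\asymp(s-1)^2$. The increment scale on each shell is then $\sqrt{\epsilon}\,v_{\alpha_{k+1}}\asymp(s-1)v_{\alpha_{k+1}}$, so the increment tail matches the anchor tail even with the small allocation, and both carry the exponent $s^2[1-f(\epsilon)]^2\log\bigl[(k+1)\log(1+\epsilon)\bigr]$. Summing over $k$ then produces the factor $\epsilon^{-s^2[1-f]^2}\bigl\{s^2[1-f]^2-1\bigr\}_+^{-1}\asymp (s-1)^{-2}\cdot(s-1)^{-1}=(s-1)^{-3}$, and the anchor coefficient $[1-f(\epsilon)]^2/4=(3-s)^2/16$ is exactly what appears in the statement. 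In short, replace your dyadic shells by the $s$-dependent grid and the argument goes through verbatim as in the paper.
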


\begin{proof}
For some $\epsilon>0$  define $\alpha_k,\, k\ge 0$, as roots of equations
\[
\|(1-h_{\alpha_k})^2\|^2=(1+\epsilon)^{-k}\|(1-h_{\alpha^\circ})^2\|^2.
\]
Then, denoting for brevity
\[
G_{k+1}(x)= s\Sigma_{\alpha_{k+1}}+x \|(1-h_{\alpha_{k+1}})^2\|,
\]
we obtain
\begin{equation}\label{eqn.27}
\begin{split}
\mathbf{P}\, \biggl\{ \sup_{\alpha\le \alpha^{\circ}}\frac{\zeta_\alpha-s\Sigma_\alpha}{\|(1-h_\alpha)^2\|}\ge x \biggr\}\le \sum_{k=0}^\infty \mathbf{P}\, \biggl\{ \sup_{\alpha\in [\alpha_{k+1},\alpha_k]}\frac{\zeta_\alpha-s\Sigma_\alpha}{\|(1-h_\alpha)^2\|}\ge x \biggr\}\\ \quad   \le \sum_{k=0}^\infty \mathbf{P}\, \biggl\{ \sup_{\alpha\in [\alpha_{k+1},\alpha_k]}\zeta_\alpha\ge G_{k+1}(x) \biggr\}\\   \le   \sum_{k=0}^\infty \mathbf{P}\, \biggl\{ \zeta_{\alpha_{k+1}}\ge [1-f(\epsilon)]G_{k+1}(x)
 \biggr\}\\ \qquad   +
 \sum_{k=0}^\infty \mathbf{P}\, \biggl\{ \sup_{\alpha\in [\alpha_{k+1},\alpha_k]}[\zeta_\alpha-\zeta_{\alpha_{k+1}}]\ge f(\epsilon)G_{k+1}(x) \biggr\},
\end{split}
\end{equation}
where  $f(\epsilon)$ will be chosen later on.

Since $\log(1+x)\ge x -x^2/2,\ x\ge 0$, then for any $\lambda>0$
\begin{equation}\label{eqn.28}
\mathbf{E}\exp(\lambda \zeta_\alpha)\le \exp\bigl[\lambda^2\|(1-h_\alpha)^2\|^2\bigr],
\end{equation}
and
by the exponential Tchebychev inequality we get
\begin{equation}\label{eqn.29}
\begin{split}
\mathbf{P}\, \biggl\{ \zeta_{\alpha_{k+1}}\ge [1-f(\epsilon)]G_{k+1}(x) \biggr\} \le
\exp\biggl\{-\frac{[1-f(\epsilon)]^2 G_{k+1}^2(x)}{4\|(1-h_{\alpha_{k+1}})^2\|^2}\biggr\}\\ \quad  \le \exp\biggl\{-s^2[1-f(\epsilon)]^2\log[(k+1)\log(1+\epsilon)]-\frac{[1-f(\epsilon)]^2 x^2}{4}\biggr\}.
\end{split}
\end{equation}

To bound from above the last term in Equation (\ref{eqn.27}), we make use of  that $2h_\alpha(k)-h^2_\alpha(k)$ is a family of ordered functions, and thus
(see (\ref{eqn.21}))
\[
\|(1-h_{\alpha_{k}})^2-(1-h_{\alpha_{k+1}})^2\|^2\le \|(1-h_{\alpha_{k}})^2\|^2-\|(1-h_{\alpha_{k+1}})^2\|^2.
\]
Similarly to (\ref{eqn.28})
\begin{equation*}
\mathbf{E}\exp\bigl\{\lambda [\zeta_{\alpha_{k}}-\zeta_{\alpha_{k+1}}]\bigr\}\le \exp\bigl[\lambda^2\|(1-h_{\alpha_{k}})^2-(1-h_{\alpha_{k+1}})^2\|^2\bigr].
\end{equation*}
 Therefore with the help of Proposition \ref{proposition.2} and  the exponential Tchebychev inequality
 we obtain\vadjust{\eject}
\begin{equation}\label{eqn.30}
\begin{split}
\mathbf{P}\, \biggl\{ \sup_{\alpha_{k+1}<\alpha\le \alpha_k}[\zeta_\alpha-\zeta_{\alpha_{k+1}}]
\ge f(\epsilon) G_{k+1}(x) \biggr\}
\le \min_{\lambda>0}\exp\biggl\{-\lambda f(\epsilon) G_{k+1}(x)\\[3pt]
\qquad {}+ \frac{\lambda^2(\sqrt{2}-1)^2\|(1-h_{\alpha_{k}})^2-(1-h_{\alpha_{k+1}})^2\|^2}
{4[\|(1-h_{\alpha_{k}})^2\|^2-\|(1-h_{\alpha_{k+1}})^2\|^2]}\biggr\}\\[3pt]
 \quad \le
C\exp\biggl\{-\frac{(\sqrt{2}-1)^2f^2(\epsilon) G_{k+1}^2(x)}
{8[\|(1-h_{\alpha_{k}})^2\|^2-\|(1-h_{\alpha_{k+1}})^2\|^2]}\biggr\}\\[3pt]
\quad =
C\exp\biggl\{-\frac{(\sqrt{2}-1)^2s^2 f^2(\epsilon)}{4\epsilon}\log[(k+1)\log(1+\epsilon)]\\[3pt]
 \qquad \quad {}- \frac{(\sqrt{2}-1)^2x^2 f^2(\epsilon)}{8\epsilon}\biggr\}.
\end{split}
\end{equation}

Now we chose $f(\epsilon)$ to balance the exponents at the right-hand sides in (\ref{eqn.29}) and (\ref{eqn.30}), thus arriving at following equation for  this function
\[
\frac{(\sqrt{2}-1)^2f^2(\epsilon)}{2\epsilon}=[1-f(\epsilon)]^2.
\]
This yields
\[
f(\epsilon)=\frac{\sqrt{2\epsilon}}{\sqrt{2}-1+\sqrt{2\epsilon}}
.
\]
With this $f(\epsilon)$ and
 with (\ref{eqn.27}--\ref{eqn.30}) we get
\begin{equation*}\label{equ.21}
\begin{split}
\mathbf{P}\, \biggl\{ \sup_{\alpha\le \alpha^{\circ}}\frac{\zeta_\alpha-s\Sigma_\alpha}{\|(1-h_\alpha)^2\|} \ge x \biggr\}\le& \frac{C\exp\{-[1-f(\epsilon)]^2x^2/4\}}{\epsilon^{s^2[1-f(\epsilon)]^2}\bigl\{s^2[1-f(\epsilon)]^2-1\bigr\}_+}.
\end{split}
\end{equation*}

Finally, choosing $\epsilon $ as a root of
$
f(\epsilon)=(s-1)/2
$, we finish the proof. 
\end{proof}

We summarize the main properties of the variance estimator in the following lemma.
\begin{lemma}\label{lemma.5} For any $q \in (1,2)$
\begin{equation*}
\mathbf{E}\Bigl\{[\sigma^2-\hat\sigma_{\hat\alpha}^2]_+\sigma^{-2}
Pen(\hat\alpha)\Bigr\}^q \le  C(2-q)^{-q}\Ps^q(\alpha_\circ,\alpha^\circ)
\mathbf{E}\bigl[Pen(\hat{\alpha})\bigr]^q .
\end{equation*}
\end{lemma}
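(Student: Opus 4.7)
The plan is to dominate $[\sigma^{2}-\hat\sigma^{2}_{\hat\alpha}]_{+}/\sigma^{2}$ uniformly for $\hat\alpha\in[\alpha_\circ,\alpha^\circ]$ by a deterministic quantity of order $\Ps(\alpha_\circ,\alpha^\circ)$ plus a sub-Gaussian random variable of comparable scale, and then to combine with the random penalty $Pen(\hat\alpha)$ via the generalized H\"older inequality of Lemma \ref{lemma.3}.

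Passing to the spectral basis as in \eqref{eqn.8}, direct expansion of $\|Y-X\hat\beta_\alpha\|^{2}$ gives
\[
\sigma^{2}\|1-H_{\alpha}\|^{2}-\|Y-X\hat\beta_{\alpha}\|^{2}=\sigma^{2}\zeta_{\alpha}-\sum_{k=1}^{p}[1-h_{\alpha}(k)]^{2}\lambda(k)\beta^{2}(k)-2\sigma\omega_{\alpha},
\]
where $\zeta_\alpha$ is exactly the process of Lemma \ref{lemma.4} and $\omega_\alpha:=\sum_{k}[1-h_\alpha(k)]^{2}\sqrt{\lambda(k)}\beta(k)\xi'(k)$ is Gaussian. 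The squared bias enters with a favourable sign; for $\omega_\alpha$, an AM--GM splitting together with the uniform exponential bound of Lemma \ref{lemma.1} (applied with $b(k)=\sqrt{\lambda(k)}\beta(k)$) absorbs half into this bias and dominates the remainder by the same $\zeta_\alpha$-type envelope, giving
\[
[\sigma^{2}-\hat\sigma^{2}_{\hat\alpha}]_{+}/\sigma^{2}\le C\sup_{\alpha_\circ\le\alpha\le\alpha^\circ}\zeta_{\alpha}/\|1-h_{\alpha}\|^{2}+\textrm{(lower order)}.
\]

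Next, using $\|(1-h_\alpha)^{2}\|\le\|1-h_\alpha\|$ and the monotonicity of $\|1-h_\alpha\|$ inherited from the ordered family, Lemma \ref{lemma.4} applied with $s$ slightly above $1$ yields
\[
\sup_{\alpha_\circ\le\alpha\le\alpha^\circ}\frac{\zeta_{\alpha}}{\|1-h_{\alpha}\|^{2}}\le A+\frac{T}{\|1-h_{\alpha_\circ}\|},
\]
where $A\lesssim[\log\log(\cdot)]^{1/2}/\|1-h_{\alpha_\circ}\|$ is deterministic and $T$ has Gaussian tails of unit scale, so $\tau:=T/\|1-h_{\alpha_\circ}\|$ satisfies hypothesis \eqref{eqn.25} of Lemma \ref{lemma.3} with $S\asymp 1/\|1-h_{\alpha_\circ}\|$; both $A$ and $S$ are $O(\Ps(\alpha_\circ,\alpha^\circ))$. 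Splitting
\[
\mathbf{E}\bigl([\sigma^{2}-\hat\sigma^{2}_{\hat\alpha}]_{+}\sigma^{-2}Pen(\hat\alpha)\bigr)^{q}\le 2^{q-1}\bigl[A^{q}\mathbf{E}Pen(\hat\alpha)^{q}+\mathbf{E}(Pen(\hat\alpha)\tau)^{q}\bigr],
\]
the first summand already has the claimed form. For the second, Lemma \ref{lemma.3} produces the upper bound $[CS/(2-q)]^{q}\mathbf{E}Pen(\hat\alpha)^{q}\log^{q}(1+Pen(\hat\alpha)^{q}/\mathbf{E}Pen(\hat\alpha)^{q})$; monotonicity of $Pen$ on $[\alpha_\circ,\alpha^\circ]$ forces $Pen(\hat\alpha)^{q}/\mathbf{E}Pen(\hat\alpha)^{q}\le (Pen(\alpha_\circ)/Pen(\alpha^\circ))^{q}$, and $\log(1+x^{q})\le q\log(1+x)$ reduces the log factor to a deterministic $q\log(1+Pen(\alpha_\circ)/Pen(\alpha^\circ))$. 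Since $(a+b)^{q}\ge a^{q}+b^{q}$ for $a,b\ge 0$ and $q\ge 1$, both contributions fit inside $C\Ps^{q}(\alpha_\circ,\alpha^\circ)/(2-q)^{q}\cdot\mathbf{E}Pen(\hat\alpha)^{q}$, delivering the claim.

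The main obstacle is the decomposition step: the Gaussian cross term $\omega_\alpha$ couples the noise with the data-driven $\hat\alpha$, and a naive Cauchy--Schwarz is too weak to isolate its positive part uniformly in $\alpha$. Lemma \ref{lemma.1} is precisely the tool that lets one split $\omega_\alpha$ into a controllable fraction of the squared bias (cancelled inside $[\,\cdot\,]_+$) plus a remainder absorbed by the $\zeta_\alpha$-envelope from Lemma \ref{lemma.4}. Once this is in place, the sub-Gaussian envelope and Lemma \ref{lemma.3} argument is essentially mechanical.
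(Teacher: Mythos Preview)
Your strategy is the paper's: the same spectral decomposition \eqref{eqn.31}, Lemma~\ref{lemma.4} combined with Lemma~\ref{lemma.3} for the chi--square process $\zeta_\alpha$, and Lemma~\ref{lemma.1} to control the Gaussian cross term $\omega_\alpha$ against the (favourably signed) squared bias. The deterministic bound on $\log(1+Pen(\hat\alpha)^q/\mathbf{E}Pen(\hat\alpha)^q)$ via the penalty ratio is also exactly how the paper closes the argument.

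There is, however, one genuine gap. After invoking Lemma~\ref{lemma.1} you write that the leftover ``remainder'' is ``absorbed by the same $\zeta_\alpha$-type envelope from Lemma~\ref{lemma.4}'' and then carry only the term $\sup_\alpha\zeta_\alpha/\|1-h_\alpha\|^2$ forward, relegating the rest to ``(lower order)''. This is not right: Lemma~\ref{lemma.1} produces a random variable $R:=\sup_\alpha\bigl[\zeta_\alpha(b)-Kv_\alpha^2(b)\bigr]$ with $\mathbf{E}e^{R}\le C$; it is sub-\emph{exponential}, not sub-Gaussian, and it has nothing to do with the chi--square process of Lemma~\ref{lemma.4}. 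Since $R$ and $Pen(\hat\alpha)$ are correlated, the contribution $\mathbf{E}\bigl[R\,Pen(\hat\alpha)/\|1-h_{\alpha_\circ}\|^{2}\bigr]^q$ is not automatically small and cannot be fed into Lemma~\ref{lemma.3} (whose hypothesis \eqref{eqn.25} fails for sub-exponential $\eta$). The paper handles this piece separately by applying Lemma~\ref{lemma.2} with $\eta'=Pen^q(\hat\alpha)$, $\eta=R$, $\lambda=1$, which yields exactly the $\|1-h_{\alpha_\circ}\|^{-2}\log\bigl(1+Pen(\alpha_\circ)/Pen(\alpha^\circ)\bigr)$ bound of \eqref{eqn.34}; this is then dominated by $\Ps$ only because of the extra factor $\|1-h_{\alpha_\circ}\|^{-1}$. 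Your sketch needs this step (Lemma~\ref{lemma.2}, not Lemma~\ref{lemma.4}) to be complete.
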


\begin{proof}
 By (\ref{eqn.8}) we obtain
\begin{equation}\label{eqn.31}
\begin{split}
\sigma^2-\hat\sigma^2_{\hat\alpha}=\frac{\sigma^2}{\|1-h_{\hat\alpha}\|^2}\sum_{k=1}^p \bigl[1-h_{\hat\alpha}(k)\bigr]^2[1-\xi'^2(k)]\\
{}-\frac{2\sigma}{\|1-h_{\hat\alpha}\|^2}\sum_{k=1}^p \bigl[1-h_{\hat\alpha}(k)\bigr]^2\xi'(k)\beta(k)\sqrt{\lambda(k)}\\
{}-\frac{1}{\|1-h_{\hat\alpha}\|^2}\sum_{k=1}^p \bigl[1-h_{\hat\alpha}(k)\bigr]^2\beta^2(k)\lambda(k).
\end{split}
\end{equation}

 The first term at the right-hand side in (\ref{eqn.31}) is  controlled with the help of Lemmas \ref{lemma.3} and \ref{lemma.4} (with $s=2$) as follows:
 \begin{equation}\label{eqn.32}
 \begin{split}
 \biggl\{\mathbf{E}\biggl|\frac{Pen(\hat{\alpha})}{\|1-h_{\hat\alpha}\|^2}\sum_{k=1}^p \bigl[1-h_{\hat\alpha}(k)\bigr]^2[1-\xi'^2(k)]\biggr|^q\biggr\}^{1/q}\\
\quad  =\biggl\{\mathbf{E}\frac{Pen^q(\hat{\alpha})}{\|1-h_{\hat\alpha}\|^q}\biggl|\frac{\zeta_{\hat{\alpha}}-2\Sigma_{\hat{\alpha}}+2\Sigma_{\hat{\alpha}}}{\|1-h_{\hat\alpha}\|}\biggr|^q\biggr\}^{1/q}
\\ \quad  \le \frac{C}{\|1-h_{\alpha_\circ}\|}\biggl\{\log\log\biggl(1+\frac{\|1-h_{\alpha_\circ}\|^2}{\|1-h_{\alpha^\circ}\|^2}\biggr)\biggr\}^{1/2}
 \bigl[\mathbf{E}Pen^q(\hat{\alpha})\bigr]^{1/q}
 \\ \qquad {}+\frac{C}{(2-p)\|1-h_{\alpha_\circ}\|}\log \biggl[1+\frac{Pen(\alpha_\circ)}{Pen(\alpha^\circ)}\biggr]
 \bigl[\mathbf{E}Pen^q(\hat{\alpha})\bigr]^{1/q}.
 \end{split}
 \end{equation}

To control the last two terms in (\ref{eqn.31}), notice that $\tilde{h}_\alpha(k)=
2h_\alpha(k)- h_\alpha^2(k),\ \alpha>0,$ is a family of ordered functions. Hence, applying Lemma \ref{lemma.1}  with
$$
b(k)=\frac{2\beta(k)\sqrt{\lambda(k)}}{K\sigma},
$$
we have
\begin{equation}\label{eqn.33}
\begin{split}
\mathbf{E}\exp\biggl\{\frac{2}{K\sigma^{2}} \biggl[2\sigma \sum_{k=1}^p \bigl[1-\tilde{h}_{\hat\alpha}(k)\bigr]\xi'(k)\beta(k)\sqrt{\lambda(k)}
\\  - \sum_{k=1}^p \bigl[1-\tilde{h}_{\hat\alpha}(k)\bigr]^2\beta^2(k)\lambda(k)
\biggr]\biggr\}\le C .
\end{split}
\end{equation}
This inequality and  Lemma \ref{lemma.2}
with
\begin{equation*}
\begin{split}
\eta'&=Pen^q(\alpha)\\
\eta&=\frac{2}{K\sigma^{2}} \biggl[2\sigma \sum_{k=1}^p \bigl[1-\tilde{h}_{\hat\alpha}(k)\bigr]\xi'(k)\beta(k)\sqrt{\lambda(k)}
 - \sum_{k=1}^p \bigl[1-\tilde{h}_{\hat\alpha}(k)\bigr]^2\beta^2(k)\lambda(k)
\biggr],
\end{split}
\end{equation*}
yield
\begin{equation}\label{eqn.34}
\begin{split}
\mathbf{E}\biggl[\frac{2\sigma}{\|1-h_{\hat\alpha}\|^2}\sum_{k=1}^p \bigl[1-h_{\hat\alpha}(k)\bigr]^2\xi'(k)\beta(k)\sqrt{\lambda(k)}\\
\qquad {}-\frac{1}{\|1-h_{\hat\alpha}\|^2}\sum_{k=1}^p \bigl[1-h_{\hat\alpha}(k)\bigr]^2\beta^2(k)\lambda(k)\biggr]^q
Pen^q(\hat{\alpha})\\
\quad \le \frac{C\sigma^{2q}}{(2-q)^q\|1-h_{\alpha_\circ}\|^{2q}}\log^q \biggl[1+\frac{Pen(\alpha_\circ)}{Pen(\alpha^\circ)}\biggr]\mathbf{E}Pen^q(\hat{\alpha}).
\end{split}
\end{equation}

Finally,  combining (\ref{eqn.31}), (\ref{eqn.32}), and (\ref{eqn.34}) and using  Jensen's inequality,
we finish the proof.
\end{proof}

\subsection{Proof of Theorem \ref{theorem.1}}

The following proposition (see Lemma 7 in \cite{G1})  summarizes   some  basic properties of the penalty defined by (\ref{eqn.13}--\ref{eqn.15}).
\begin{proposition}\label{proposition.7}
\begin{align}\nonumber
{Q^+(\alpha)}\ge D(\alpha)\max\biggl\{\sqrt{\log \frac{D(\alpha)}{D(\alpha^{\circ})}},\frac{1}{\mu_\alpha} \log \frac{D(\alpha)}{D(\alpha^{\circ})}\biggr\},
 \\
\nonumber
\mu_\alpha \ge \min\biggl\{\frac{1}{2}\sqrt{\log \frac{D(\alpha)}{D(\alpha^{\circ})}}, \frac14\biggr\},
\end{align}
If ${D(\alpha)}\ge \exp(2)D(\alpha^{\circ})$, then
\begin{align}
\nonumber
&{D(\alpha)}\ge {\mu_\alpha Q^+(\alpha)}\biggl[\log \frac{\mu_\alpha Q^+(\alpha)}{D(\alpha^{\circ})}\biggr]^{-1}.
\end{align}
For any $\alpha_1 \le \alpha_2$
\begin{align}\nonumber
&\frac{D(\alpha_1)}{D(\alpha_2)}\le \frac{Q^+(\alpha_1)}{Q^+(\alpha_2)}.
\end{align}
\end{proposition}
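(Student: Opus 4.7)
The proposition bundles four analytic statements about the implicitly defined quantities $\mu_\alpha$ and $Q^+(\alpha)$. My plan is to reduce everything to the two basic facts: (a) by construction of $D(\alpha)$ in (\ref{eqn.15}), the weights satisfy $\sum_k\rho_\alpha^2(k)=1$ and $\rho_\alpha(k)\in[0,1]$; (b) $F$ is strictly increasing and convex on $[0,1/2)$ with $F(0)=F'(0)=0$, and the power series $F(y)=y^2+\tfrac{8}{3}y^3+\cdots$ yields the two-sided bound $y^2\le F(y)\le \tfrac{2y^2}{1-2y}$, refined to $F(y)\le 4y^2$ on $[0,1/4]$. A useful rewriting is $\tfrac{2y^2}{1-2y}-F(y)=-\tfrac12\log(1-2y)-y=:G(y)\ge 0$, with $G(y)\le F(y)$ term-by-term.

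For the bound on $\mu_\alpha$, I would dichotomize: if $\mu_\alpha\ge 1/4$ the conclusion is immediate; otherwise $\mu_\alpha\rho_\alpha(k)<1/4$ for every $k$, and summing $F(\mu_\alpha\rho_\alpha(k))\le 4\mu_\alpha^2\rho_\alpha^2(k)$ together with (\ref{eqn.14}) and $\sum\rho_\alpha^2(k)=1$ gives $\log[D(\alpha)/D(\alpha^\circ)]\le 4\mu_\alpha^2$. For the lower bounds on $Q^+(\alpha)$, I would use the per-term inequality $\tfrac{2y^2}{1-2y}\ge F(y)$: dividing by $2\mu_\alpha^2$ and summing yields $S_\alpha:=\sum_k\rho_\alpha^2(k)/(1-2\mu_\alpha\rho_\alpha(k))\ge \log[D(\alpha)/D(\alpha^\circ)]/(2\mu_\alpha^2)$, so via (\ref{eqn.13}) one obtains $Q^+(\alpha)\ge D(\alpha)\log[D(\alpha)/D(\alpha^\circ)]/\mu_\alpha$. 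Combining this with the trivial $S_\alpha\ge \sum\rho_\alpha^2(k)=1$, hence $Q^+(\alpha)\ge 2D(\alpha)\mu_\alpha$, and taking the geometric mean gives $Q^+(\alpha)\ge D(\alpha)\sqrt{2\log[D(\alpha)/D(\alpha^\circ)]}$, which covers the $\sqrt{\log}$ branch of the maximum.

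For the monotonicity claim, write $Q^+(\alpha)=2D(\alpha)\mu_\alpha S_\alpha$, so the desired $D(\alpha_1)/D(\alpha_2)\le Q^+(\alpha_1)/Q^+(\alpha_2)$ is equivalent to $\mu_{\alpha_1}S_{\alpha_1}\ge \mu_{\alpha_2}S_{\alpha_2}$ for $\alpha_1\le \alpha_2$. Because $h_\alpha$ is an ordered family and $D(\alpha)$ is monotone in $\alpha$, the right-hand side of (\ref{eqn.14}) is monotone in $\alpha$; differentiating (\ref{eqn.14}) implicitly in $\alpha$ (or using a direct comparison based on the ordering) gives the required monotonicity of $\mu_\alpha S_\alpha$.

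The main obstacle is the converse bound (third inequality). Setting $T:=\mu_\alpha Q^+(\alpha)/D(\alpha)=2\mu_\alpha^2 S_\alpha$ and $L:=\log[D(\alpha)/D(\alpha^\circ)]$, the inequality rewrites as $\log T+L\ge T$, and the decomposition $\tfrac{2y^2}{1-2y}=F(y)+G(y)$ yields $T=L+R$ with $R=\sum_k G(\mu_\alpha\rho_\alpha(k))$. The term-by-term comparison $G\le F$ gives $R\le L$, hence $T\le 2L$; combined with the assumption $L\ge 2$ (so that $T\ge L\ge 2$, i.e., $\log T\ge 0$), the reduction becomes $R\le \log T$. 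The hard part is to sharpen $R\le L$ into $R\le \log T$, and I would do this by tracking separately the indices where $\mu_\alpha\rho_\alpha(k)$ is small (where $G(y)\sim y^2\sim F(y)$ contributes a small total once we invoke the lower bound on $\mu_\alpha$ already established) and the indices where it is of order unity (where the stronger convexity $F(y)\gg G(y)$ kicks in). This balancing, using both previously proved bounds of the proposition, is the technical core.
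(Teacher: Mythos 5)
Your handling of the first two displays is correct and complete: the normalization $\sum_k\rho_\alpha^2(k)=1$ following from (\ref{eqn.15}), the elementary bounds $x^2\le F(x)\le 2x^2/(1-2x)$ together with $F(x)\le 4x^2$ on $[0,1/4]$, and the representation $Q^+(\alpha)=2D(\alpha)\mu_\alpha S_\alpha$ with $S_\alpha=\sum_k\rho_\alpha^2(k)/(1-2\mu_\alpha\rho_\alpha(k))\ge 1$ do yield $\mu_\alpha\ge\min\{\tfrac12\sqrt{L},\tfrac14\}$ and $Q^+(\alpha)\ge D(\alpha)\max\{\sqrt{L},L/\mu_\alpha\}$, where $L=\log[D(\alpha)/D(\alpha^\circ)]$. (The paper itself offers no proof to compare against --- it imports the statement as Lemma~7 of \cite{G1} --- so the proposal has to stand on its own.) The last two claims, however, are not established. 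For the monotonicity $D(\alpha_1)/D(\alpha_2)\le Q^+(\alpha_1)/Q^+(\alpha_2)$ you only assert that implicit differentiation of (\ref{eqn.14}) gives monotonicity of $\mu_\alpha S_\alpha$. This is not routine: $\rho_\alpha(k)$ depends on $\alpha$ both through $\tilde h_\alpha(k)$ and through the normalization by $D(\alpha)$, so changing $\alpha$ rearranges the entire weight profile, and $Q^+(\alpha)/D(\alpha)$ is \emph{not} a function of $L$ alone (for a profile concentrated on one index it is of order $L$, for a flat profile of order $\sqrt{L}$). A genuine comparison argument exploiting the ordering of $\{\tilde h_\alpha\}$ is required here and is missing.

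The more serious gap is the third inequality. Your reduction to $\log(L+R)\ge R$, with $R=\sum_kG(\mu_\alpha\rho_\alpha(k))$ and $G(y)=\tfrac{2y^2}{1-2y}-F(y)$, is correct, but the sharpening of $R\le L$ into $R\le\log(L+R)$ that you defer as the ``technical core'' cannot be carried out by splitting indices: in the flat regime $\rho_\alpha(k)\equiv p^{-1/2}$ with $p$ large, every $\mu_\alpha\rho_\alpha(k)$ is small, $G(y)/F(y)\to 1$ term by term, hence $R\to L$ while $\log(L+R)\to\log(2L)\ll L$; already at $L=2$ one gets $\log 4<2$. So the inequality you need is false in that regime, the bound $R\le L$ (equivalently $\mu_\alpha Q^+(\alpha)\le 2D(\alpha)L$) being essentially attained, and no case analysis over the sizes of $\mu_\alpha\rho_\alpha(k)$ can recover it. What your computation does give immediately is $L+\log(L+R)\ge L\ge\tfrac12(L+R)$, i.e.\ the third display with an absolute constant $\tfrac12$ multiplying $D(\alpha)$; obtaining it with constant $1$ exactly as transcribed would require additional structure or a different normalization, and this discrepancy should be confronted rather than postponed.
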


We begin the proof of Theorem \ref{theorem.1} with a simple generalization of Proposition~\ref{proposition.3}. Consider  the following  random process
\[
\eta_\alpha^\epsilon =\sum_{k=1}^p \lambda^{-1}(k)h_\alpha^\epsilon(k)[\xi'^2(k)-1],
\]
where $h_\alpha^\epsilon(k)=[2(1+\epsilon) h_{\alpha}(k)-\epsilon h_{\alpha}^2(k)]/(2+\epsilon)$.
\begin{lemma}\label{lemma.6} Let
 $q\in (1,2]$.
Then for any random variable $\hat\alpha\le \alpha^\circ$
\begin{equation*}
\mathbf{E}\eta_{\hat \alpha}^\epsilon \le \frac{C \sigma_{\alpha^\circ}^\epsilon}{\sqrt{q-1}} \biggl[\mathbf{E}\biggl(\frac{\sigma_{\hat\alpha}^\epsilon}
{\sigma_{\alpha^\circ}^\epsilon}\biggr)^{q}
\biggr]^{1/q},
\end{equation*}
where
\[
\sigma_\alpha^\epsilon=\biggl\{2\sum_{k=1}^p \lambda^{-2}(k)[h_\alpha^\epsilon(k)]^2
\biggr\}^{1/2}.
\]
\end{lemma}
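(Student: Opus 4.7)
The plan is to recognize $\eta^\epsilon_\alpha$ as an ordered process (in the sense of the definition preceding Proposition~\ref{proposition.3}) and then invoke Proposition~\ref{proposition.4} with outer exponent $q'=1$ and inner exponent equal to the $q$ of the statement. First observe that the claimed bound is equivalent to $\mathbf{E}\eta^\epsilon_{\hat\alpha}\le C(q-1)^{-1/2}[\mathbf{E}(\sigma^\epsilon_{\hat\alpha})^q]^{1/q}$, since the factors $\sigma^\epsilon_{\alpha^\circ}$ cancel; the ratio form in the statement is cosmetic.

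To set the stage, I would first verify that $\{h^\epsilon_\alpha(k)\}$ is an ordered family. This follows from orderedness of $\{h_\alpha(k)\}$ together with the strict monotonicity of the map $h\mapsto[2(1+\epsilon)h-\epsilon h^2]/(2+\epsilon)$ on $[0,1]$ (its derivative equals $[2(1+\epsilon)-2\epsilon h]/(2+\epsilon)>0$). Since Proposition~\ref{proposition.3} is stated for processes with $\zeta_0=0$, I would then pass to the centered process $\bar\eta_\alpha:=\eta^\epsilon_\alpha-\eta^\epsilon_{\alpha^\circ}$ (reparametrized so that $\alpha=\alpha^\circ$ corresponds to $t=0$), noting that $\mathbf{E}\eta^\epsilon_{\alpha^\circ}=0$ gives $\mathbf{E}\eta^\epsilon_{\hat\alpha}=\mathbf{E}\bar\eta_{\hat\alpha}$.

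Verifying that $\bar\eta_\alpha$ is ordered with monotone function $v^2_\alpha=(\sigma^\epsilon_\alpha)^2-(\sigma^\epsilon_{\alpha^\circ})^2$ reduces to two standard steps. By Definition~\ref{d1} I may assume $h^\epsilon_{\alpha_1}(k)\ge h^\epsilon_{\alpha_2}(k)$ for every $k$; the factorization $(h^\epsilon_{\alpha_1})^2-(h^\epsilon_{\alpha_2})^2=(h^\epsilon_{\alpha_1}+h^\epsilon_{\alpha_2})(h^\epsilon_{\alpha_1}-h^\epsilon_{\alpha_2})\ge(h^\epsilon_{\alpha_1}-h^\epsilon_{\alpha_2})^2$ then yields $\mathbf{E}(\bar\eta_{\alpha_1}-\bar\eta_{\alpha_2})^2\le v^2_{\alpha_1}-v^2_{\alpha_2}$. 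For the exponential moment, the increment is a weighted sum of independent centered $\chi^2$ variables with coefficients $c_k=\lambda^{-1}(k)[h^\epsilon_{\alpha_1}(k)-h^\epsilon_{\alpha_2}(k)]$; the key inequality $(\bar\Delta^v)^2\ge 2\sum_k c_k^2\ge 2\max_k c_k^2$ forces the uniform bound $\max_k|c_k|\le\bar\Delta^v/\sqrt{2}$, after which a standard Bernstein-type estimate on $\log\mathbf{E}\exp[\lambda(\xi'^2(k)-1)]$ gives $\mathbf{E}\exp[\Lambda\Delta^\zeta/\bar\Delta^v]\le C$ for a fixed absolute $\Lambda>0$. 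Proposition~\ref{proposition.4} with $q'=1$ then delivers $\mathbf{E}|\bar\eta_{\hat\alpha}|\le C'[\mathbf{E}v^q_{\hat\alpha}]^{1/q}\le C'[\mathbf{E}(\sigma^\epsilon_{\hat\alpha})^q]^{1/q}$ via the trivial estimate $v^q_{\hat\alpha}=[(\sigma^\epsilon_{\hat\alpha})^2-(\sigma^\epsilon_{\alpha^\circ})^2]^{q/2}\le(\sigma^\epsilon_{\hat\alpha})^q$.

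The main obstacle is extracting the explicit $(q-1)^{-1/2}$ divergence of the constant. Propositions~\ref{proposition.3} and~\ref{proposition.4} hide the $q$-dependence inside generic constants $C(q',q)$ and $C'$, whereas here the sharp rate as $q\downarrow 1$ is essential. I would extract it by reinspecting the optimization behind Proposition~\ref{proposition.4}: with $q'=1$, balancing the tail estimate $C(1,q)/z^{1/(q-1)}$ supplied by Proposition~\ref{proposition.3} against the linear term $z\mathbf{E}v^q_{\hat\alpha}$, and tracking the sub-exponential (rather than sub-Gaussian) character of the $\chi^2$ increments through the chaining step of Proposition~\ref{proposition.2}. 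The factor $(q-1)^{-1/2}$ should emerge from the exponent-dependent constant at that step, and this refinement is presumably what the author means by calling the lemma a ``simple generalization'' of Proposition~\ref{proposition.3}.
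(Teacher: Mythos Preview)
Your outline is on the right track and matches the paper's strategy: recognize $\eta^\epsilon_\alpha$ as an ordered process (your verification that $h\mapsto[2(1+\epsilon)h-\epsilon h^2]/(2+\epsilon)$ is increasing on $[0,1]$ and the inequality $(h^\epsilon_{\alpha_1})^2-(h^\epsilon_{\alpha_2})^2\ge(h^\epsilon_{\alpha_1}-h^\epsilon_{\alpha_2})^2$ are exactly what is needed), and then run the Proposition~\ref{proposition.3}/\ref{proposition.4} machinery with outer exponent $1$. However, the ``main obstacle'' you name --- extracting the explicit $(q-1)^{-1/2}$ --- is not a side issue; it \emph{is} the lemma. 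Quoting Propositions~\ref{proposition.3} and~\ref{proposition.4} as black boxes gives only an unspecified $C(q)$, and your plan to ``reinspect'' those proofs and ``track the sub-exponential character through the chaining'' is too vague to count as a proof. (Note also that Proposition~\ref{proposition.3} is stated for $q'>1$ strictly, so $q'=1$ cannot be plugged in directly.)

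The paper does not reinspect the old proofs; it redoes the argument from scratch with explicit constants. Concretely, it sets $S(x)=x^{1/(q-1)}$, chains along the geometric grid $\sigma^\epsilon_{\alpha_k}=\sigma^\epsilon_{\alpha^\circ}S(1/z)e^k$, and combines (\ref{eqn.12}) with Proposition~\ref{proposition.2} to obtain
\[
\rho(z)\stackrel{\rm def}{=}\mathbf{E}\sup_{\alpha\le\alpha^\circ}\Bigl[\eta^\epsilon_\alpha - z\,\sigma^\epsilon_\alpha\bigl(\sigma^\epsilon_\alpha/\sigma^\epsilon_{\alpha^\circ}\bigr)^{q-1}\Bigr]_+\;\le\; C\sigma^\epsilon_{\alpha^\circ}\int_0^\infty xS(x/z)e^{-Cx^2}\,dx.
\]
The $(q-1)^{-1/2}$ does \emph{not} come from the chaining constant of Proposition~\ref{proposition.2}, as you suggest; it comes from evaluating this integral by Laplace's method, $\int_0^\infty x^{q/(q-1)}e^{-Cx^2}\,dx\asymp\exp\bigl[\tfrac{q}{2(q-1)}\log\tfrac{q}{q-1}\bigr]$, and then carrying out the minimization $\min_z\{zE+(C/z)^{1/(q-1)}\exp[\cdots]\}$ with $E=\mathbf{E}(\sigma^\epsilon_{\hat\alpha}/\sigma^\epsilon_{\alpha^\circ})^q$ explicitly. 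That last optimization is where the factor $(q-1)^{-1/2}$ finally appears. Without these two computations your argument has the right skeleton but no proof of the stated constant.
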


\begin{proof}
 It is based on  the following fact. Let $S(x)=x^{1/(q-1)}, \ x \in \mathbb{R}^+$.
 Then
\begin{equation*}\label{equ.26}
\begin{split}
\rho(z)\,\stackrel{\rm def}{=}\, \mathbf{E}\sup_{\alpha\le \alpha^\circ}\biggl\{\eta_\alpha^\epsilon-z\sigma_{\alpha}^\epsilon S^{-1}\biggl(\frac{\sigma_\alpha^\epsilon}
{\sigma_{\alpha^\circ}^\epsilon}\biggr)
\biggr\}_+ \\ \le C\sigma_{\alpha^\circ}^\epsilon \int_0^\infty xS\biggl(\frac{x}{z}\biggr){\rm e}^{-Cx^2}\, dx,
\end{split}
\end{equation*}
where $S^{-1}(x)=x^{q-1}$ denotes the inverse function to $S(x)$.

To prove this inequality, define $\alpha_k, \ k=0,1,2,\ldots$  as roots of the following equations
\[
{\sigma_{\alpha_k}^\epsilon}=\sigma_{\alpha^\circ}^\epsilon S \bigl(1/z\bigr){\rm e}^k.
\]
Then,  noticing that $\eta_\alpha^\epsilon-\eta_{\alpha_k}^\epsilon$ is an ordered process,
we obtain by (\ref{eqn.12}) and Proposition \ref{proposition.2}
\begin{equation*}
\begin{split}
\rho(z) \le \mathbf{E} \sup_{\alpha_0\le \alpha\le \alpha^\circ}|\eta_\alpha^\epsilon|
+ \sum_{k=2}^\infty \mathbf{E}\sup_{\alpha_{k}\le \alpha < \alpha_{k-1} }\biggl\{\eta_\alpha^\epsilon-z \sigma_{\alpha_{k-1}}^\epsilon S^{-1}\biggl(\frac{\sigma_{\alpha_{k-1}}^\epsilon}
{\sigma_{\alpha^\circ}^\epsilon}
\biggr)\biggr]\biggr\}_+\\ \le C\sigma_{\alpha^\circ}^\epsilon S \biggl(\frac1z\biggr) \sum_{k=0}^\infty
{\rm e}^k \exp\Bigl\{-C\bigl[zS^{-1}\bigl(S \bigl(z^{-1}\bigr){\rm e}^k\bigl)\bigr]^2\Bigr\} \\
\le C\sigma_{\alpha^\circ}^\epsilon\int_{0}^\infty\exp\Bigl\{-C\bigl[z S^{-1}(u)\bigr]^2\bigr\}
\, du= C\sigma_{\alpha^\circ}^\epsilon \int_{0}^\infty{\rm e}^{-Cz^2 v^2}\, d S(v)
\\ \le  C\sigma_{\alpha^\circ}^\epsilon z^2\int_{0}^\infty S(v)v {\rm e}^{-Cz^2v^2}\, dv
=C\sigma_{\alpha^\circ}^\epsilon \int_0^\infty x S \biggl(\frac{x}{z}\biggr){\rm e}^{-Cx^2}\, dx.
\end{split}
\end{equation*}

Next we get  by the Laplace method
\begin{equation}\label{eqn.35}
\begin{split}
\rho(z)= C^{q/(q-1)} S\biggl(\frac1z\biggr)
\int_0^\infty x^{q/(q-1)}{\rm e}^{-x^2/2}\, dx\\
\le  C^{q/(q-1)} \biggl(\frac1z\biggr)^{1/(q-1)}\exp\biggl[\frac{q}{2(q-1)}\log \frac{q}{q-1}\biggr].
\end{split}
\end{equation}

To finish the proof, denote for brevity
\[
E=\mathbf{E}\biggl(\frac{\sigma_{\hat \alpha}^\epsilon}{\sigma_{\alpha^\circ}^\epsilon}\biggr)^q.
\]
Then by (\ref{eqn.35}) we obtain with a simple algebra
\begin{equation*}
\begin{split}
\mathbf{E}\eta_\alpha^\epsilon
\le  \min_{z}\biggl\{z \mathbf{E}\sigma_{\hat{\alpha}}^\epsilon \sigma_{\alpha^\circ}S^{-1}
\biggl(\frac{\sigma_{\hat\alpha}^\epsilon}
{\sigma_{\alpha^\circ}^\epsilon}\biggr)+ S\biggl(\frac{C}{z}\biggr)\exp\biggl[\frac{q}{2(q-1)}\log \frac{q}{q-1}\biggr]\biggr\}\\
 \\
 \le C\sigma_{\alpha^\circ}^\epsilon \min_{z}\biggl\{z E+ \biggl(\frac{C}{z}\biggr)^{1/(q-1)}\exp\biggl[\frac{q}{2(q-1)}\log \frac{q}{q-1}\biggr]\biggr\}\\ \le\frac{C}{\sqrt{q-1}}
\sigma_{\alpha^\circ}^\epsilon E^{1/q}.\\[-12pt]  
\end{split}
\end{equation*}
\end{proof}

The following important lemma provides an upper bound for $L_{\hat\alpha}(\beta)+(1+\gamma)Q^+(\hat\alpha)$.

\begin{lemma}\label{lemma.7}For  any data-driven $\hat\alpha$ and any given $\bar\alpha\in [\alpha_\circ,\alpha^\circ]$, the following inequality
\begin{equation*}
\begin{split}
&\Bigl\{\mathbf{E}\bigl[\sigma^{-2}L_{\hat\alpha}(\beta)+(1+\gamma)
Q^+(\hat\alpha)\bigr]^{1+\gamma/4}\Bigr\}^{1/(1+\gamma/4)}\\&\quad \le \frac{C}{[1-C \Ps(\alpha_\circ,\alpha^\circ)/\gamma]_+} \biggl[\frac{\bar{R}_{\bar\alpha}(\beta)}{\gamma \sigma^2}
+\frac{D(\alpha^\circ)}{\gamma^4}\biggr] 
\end{split}
\end{equation*}
holds uniformly in $\beta\in \mathbb{R}^p$ and $\gamma\in (0,1/{4})$.
\end{lemma}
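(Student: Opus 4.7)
My plan is to derive a pointwise upper bound for $T := \sigma^{-2}L_{\hat\alpha}(\beta)+(1+\gamma)Q^+(\hat\alpha)$, take $L^{1+\gamma/4}$-norms, and solve the resulting self-bounding inequality. Starting from the optimality $R_{\hat\alpha}[Y,Pen]\le R_{\bar\alpha}[Y,Pen]$ of $\hat\alpha$, the identity~(\ref{eqn.9}), and the algebraic relation $R_{\hat\alpha}^\sigma[Y,Pen]=R_{\hat\alpha}[Y,Pen]-(\hat\sigma_{\hat\alpha}^2-\sigma^2)Pen(\hat\alpha)$, I obtain the pointwise inequality
\begin{equation*}
L_{\hat\alpha}(\beta)+(1+\gamma)\sigma^2 Q^+(\hat\alpha)\le R_{\bar\alpha}[Y,Pen]+\mathcal{C}+[\sigma^2-\hat\sigma_{\hat\alpha}^2]_+Pen(\hat\alpha)+\sigma^2\eta_{\hat\alpha}+\mathrm{cross}_{\hat\alpha},
\end{equation*}
where $\mathrm{cross}_{\hat\alpha}$ denotes the Gaussian cross term of~(\ref{eqn.9}). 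To expose the $1/\gamma$-factors appearing in the target bound, I split $\sigma^2\eta_{\hat\alpha}\le \sigma^2(1+\gamma/2)Q^+(\hat\alpha)+\sigma^2\sup_\alpha[\eta_\alpha-(1+\gamma/2)Q^+(\alpha)]_+$, absorb the first summand into the left-hand side, and observe that for $\gamma\le 1/4$ the residual $L_{\hat\alpha}(\beta)+(\gamma/2)\sigma^2 Q^+(\hat\alpha)$ dominates $(\gamma/4)\sigma^2 T$. Dividing produces
\begin{equation*}
T\le \frac{4}{\gamma\sigma^2}\bigl(R_{\bar\alpha}[Y,Pen]+\mathcal{C}\bigr)+\frac{4}{\gamma\sigma^2}[\sigma^2-\hat\sigma_{\hat\alpha}^2]_+Pen(\hat\alpha)+\frac{4}{\gamma}\,\Xi+\frac{4}{\gamma\sigma^2}|\mathrm{cross}_{\hat\alpha}|,
\end{equation*}
with $\Xi:=\sup_{\alpha\le\alpha^\circ}[\eta_\alpha-(1+\gamma/2)Q^+(\alpha)]_+$.

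Taking $L^{1+\gamma/4}$-norms via Minkowski, I estimate each summand. For $R_{\bar\alpha}[Y,Pen]+\mathcal{C}$ I use that $\bar\alpha$ is deterministic and its mean equals $\bar R_{\bar\alpha}(\beta)$: exponential Tchebychev applied to the $\chi^2$- and Gaussian parts of the centered fluctuations shows their $L^{1+\gamma/4}$-norm is of order $\sigma^2D(\alpha^\circ)$, absorbed into $\bar R_{\bar\alpha}(\beta)$. For $\Xi$ I invoke Proposition~\ref{proposition.1} with its $\gamma$ replaced by $\gamma/2$ and its $q$ by $\gamma/4$ (so that $\gamma-q=\gamma/4$), giving $\|\Xi\|_{1+\gamma/4}\le CD(\alpha^\circ)/\gamma^{3}$ and hence the $CD(\alpha^\circ)/\gamma^4$ term after the outer $4/\gamma$. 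For the variance-correction term, Lemma~\ref{lemma.5} gives an $L^{1+\gamma/4}$-norm bounded by $C\Ps(\alpha_\circ,\alpha^\circ)\|Pen(\hat\alpha)\|_{1+\gamma/4}$; combining condition~(\ref{eqn.16}) with the elementary estimate $Pen_u(\alpha)\le (2/C_2)\|h_\alpha\|^2_\lambda\le (2/C_2)\sigma^{-2}L_\alpha(\beta)$ yields $Pen(\hat\alpha)\le C_*T$, so this summand contributes $(C\Ps/\gamma)\|T\|_{1+\gamma/4}$. Finally, $|\mathrm{cross}_{\hat\alpha}|$ is controlled by Proposition~\ref{proposition.6} at exponent $p=1+\gamma/4$, after writing $\mathrm{cross}_{\hat\alpha}=\mathrm{cross}_{\bar\alpha}+(\mathrm{cross}_{\hat\alpha}-\mathrm{cross}_{\bar\alpha})$ and applying AM--GM to the product of variance- and bias-type factors; this produces contributions absorbed into the previous estimates plus at most an arbitrarily small multiple of $\|T\|_{1+\gamma/4}$.

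Collecting the four estimates gives a self-bounding inequality
\begin{equation*}
\|T\|_{1+\gamma/4}\le \frac{C\bar R_{\bar\alpha}(\beta)}{\gamma\sigma^2}+\frac{CD(\alpha^\circ)}{\gamma^4}+\frac{C\Ps(\alpha_\circ,\alpha^\circ)}{\gamma}\|T\|_{1+\gamma/4},
\end{equation*}
and transposing the $\|T\|_{1+\gamma/4}$-term, then dividing by $[1-C\Ps/\gamma]_+$, yields the stated bound. The main obstacle is the estimation of $[\sigma^2-\hat\sigma_{\hat\alpha}^2]_+Pen(\hat\alpha)$: since $\hat\sigma_{\hat\alpha}^2$ and $Pen(\hat\alpha)$ are strongly coupled through the data-driven $\hat\alpha$, the expectation does not factor and a naive H\"older would be too lossy. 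Lemma~\ref{lemma.5}, resting on the generalized H\"older estimate of Lemma~\ref{lemma.3} and the law-of-the-iterated-logarithm bound of Lemma~\ref{lemma.4}, is designed precisely to decouple these two factors at a cost of only $\Ps(\alpha_\circ,\alpha^\circ)$.
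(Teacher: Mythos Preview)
Your overall architecture---optimality of $\hat\alpha$, decomposition of the excess, term-by-term control, and a self-bounding inequality in the $L^{1+\gamma/4}$-norm---matches the paper's strategy. The use of Proposition~\ref{proposition.1} for $\Xi$ and of Lemma~\ref{lemma.5} together with~(\ref{eqn.16}) for $[\sigma^2-\hat\sigma_{\hat\alpha}^2]_+Pen(\hat\alpha)$ is exactly what the paper does (compare (\ref{eqn.39}) and (\ref{eqn.46})).

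There is, however, a genuine gap in your treatment of the cross term. You pass from $L_{\hat\alpha}(\beta)+(\gamma/2)\sigma^2Q^+(\hat\alpha)$ to $(\gamma/4)\sigma^2T$ and then divide. In doing so you discard the full $L_{\hat\alpha}(\beta)$ from the right-hand side, whereas the paper keeps $-(1-\gamma/2)L_{\hat\alpha}(\beta)$ there (see (\ref{eqn.37})). That negative term is essential: Proposition~\ref{proposition.6} applied to $|\mathrm{cross}_{\hat\alpha}-\mathrm{cross}_{\bar\alpha}|$ yields factors $[\mathbf{E}\max_k\lambda^{-1}h_{\hat\alpha}^2]^{1/2}$ and $[\mathbf{E}\sum_k(1-h_{\hat\alpha})^2\beta^2]^{1/2}$, both controlled by $(\mathbf{E}T)^{1/2}\le\|T\|_{1+\gamma/4}^{1/2}$. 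After the outer $4/\gamma$ and AM--GM with parameter $\epsilon$, the $\|T\|_{1+\gamma/4}$-coefficient is $C\epsilon/\gamma$; to make it smaller than a fixed constant you must take $\epsilon\asymp\gamma$, and then the complementary term becomes $C\bar R_{\bar\alpha}(\beta)/(\gamma^2\sigma^2)$, not $C\bar R_{\bar\alpha}(\beta)/(\gamma\sigma^2)$. So the claim ``arbitrarily small multiple of $\|T\|_{1+\gamma/4}$ with the remainder absorbed'' fails, and you obtain the lemma only with the weaker $\gamma^{-2}$ in front of $\bar R_{\bar\alpha}(\beta)/\sigma^2$.

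The paper avoids this loss by a different device: it bounds the \emph{combination} $\mathrm{cross}_{\hat\alpha}-\mathrm{cross}_{\bar\alpha}-(1-\gamma/2)L_{\hat\alpha}(\beta)$ directly. Using Proposition~\ref{proposition.5} with $q'=2/3$, the ordered-family inequalities (\ref{eqn.41})--(\ref{eqn.42}), and Young's inequality, it obtains (\ref{eqn.44}), whose right-hand side depends only on $\bar\alpha$. This removes the $\hat\alpha$-dependence before any division by $\gamma$, so no extra $1/\gamma$ appears. To close the gap in your argument, split off only $(\gamma/2)L_{\hat\alpha}(\beta)$ to the left (as in (\ref{eqn.37})) and retain $-(1-\gamma/2)L_{\hat\alpha}(\beta)$ on the right to absorb the cross term via Proposition~\ref{proposition.5} rather than Proposition~\ref{proposition.6}.
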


\begin{proof}
 In view of the definition of $\hat\alpha$,
for any given smoothing parameter  $\bar\alpha$,  $R_{\hat\alpha}[Y,Pen]\le R_{\bar\alpha}[Y,Pen]$. It is easy to check with the help of (\ref{eqn.8}) that this inequality is equivalent to the following one
\begin{equation}\label{eqn.36}
\begin{split}
 L_{\hat\alpha}(\beta)+(1+\gamma)
\sigma^2Q^+(\hat\alpha)
 -\sigma^2 \sum_{k=1}^p \lambda^{-1}(k)\tilde{h}_{\hat\alpha}(k)[\xi'^2(k)-1]\\
 \qquad {}+2\sigma  \sum_{k=1}^p \lambda^{-1/2}(k)[1-h_{\hat\alpha}(k)]^2\xi'(k)\beta(k) +[\hat\sigma_{\hat\alpha}^2-\sigma^2]Pen(\hat\alpha)
\\ \quad \le
L_{\bar\alpha}(\beta)+(1+\gamma)
\sigma^2Q^+(\bar\alpha)
-\sigma^2 \sum_{k=1}^p \lambda^{-1}(k)\tilde{h}_{\bar\alpha}(k)[\xi'^2(k)-1]\\
\qquad {}+2\sigma  \sum_{k=1}^p \lambda^{-1/2}(k)[1-h_{\bar\alpha}(k)]^2\xi'(k)\beta(k) +[\hat\sigma_{\bar\alpha}^2-\sigma^2]Pen(\bar\alpha),
\end{split}
\end{equation}
where
$
\tilde{h}_\alpha(k)=2h_\alpha(k)-h^2_\alpha(k)
 $.
 We can rewrite (\ref{eqn.36}) as follows:
 \begin{equation}\label{eqn.37}
\begin{split}
 \frac{\gamma}{2}\bigl[L_{\hat\alpha}(\beta)+(1+\gamma)
\sigma^2Q^+(\hat\alpha)\bigr]\le
L_{\bar\alpha}(\beta)+(1+\gamma)
\sigma^2Q^+(\bar\alpha)
\\\quad {}-\sigma^2 \sum_{k=1}^p \lambda^{-1}(k)\tilde{h}_{\bar\alpha}(k)[\xi'^2(k)-1]\\
\quad {}+\sigma^2 \sum_{k=1}^p \lambda^{-1}(k)\tilde{h}_{\hat\alpha}(k)[\xi'^2(k)-1]
-\bigg(1+\frac{\gamma}{2}-\frac{\gamma^2}{2}\biggr)\sigma^2Q^+(\hat\alpha)\\
\quad {}+2\sigma  \sum_{k=1}^p \lambda^{-1/2}(k)[\tilde{h}_{\hat\alpha}(k)-\tilde{h}_{\bar\alpha}(k)]
\xi'(k)\beta(k)-\biggl(1-\frac\gamma2\biggr)L_{\hat\alpha}(\beta) \\ \quad+[\hat\sigma_{\bar\alpha}^2-\sigma^2]_+Pen(\bar\alpha)
 +[\sigma^2-\hat\sigma_{\hat\alpha}^2]_+Pen(\hat\alpha).
\end{split}
\end{equation}

Since $\bar\alpha$ is  given, we get by Jensen's inequality
\begin{equation}\label{eqn.38}
\begin{split}
\mathbf{E}\biggl|\sum_{k=1}^p \lambda^{-1}(k)\tilde{h}_{\bar\alpha}(k)[\xi'^2(k)-1]\biggr|^{1+\gamma/4}\le
C\biggl\{\sum_{k=1}^p \lambda^{-2}(k)\tilde{h}_{\bar\alpha}^2(k)\biggr\}^{1/2+\gamma/8}\\
\le C\bigl[D(\bar\alpha)\bigr]^{1+\gamma/4}
\le C\bigl[\sigma^{-2}\bar{R}_{\bar\alpha}(\beta)\bigr]^{1+\gamma/4}.
\end{split}
\end{equation}

 The third line in (\ref{eqn.37}) is bounded by Proposition \ref{proposition.1} as follows:
 \begin{equation}\label{eqn.39}
 \begin{split}
 \mathbf{E}\biggl[\sum_{k=1}^p \lambda^{-1}(k)\tilde{h}_{\hat\alpha}(k)[\xi'^2(k)-1]
-\bigg(1+\frac{\gamma}{2}-\frac{\gamma^2}{2}\biggr)Q^+(\hat\alpha)\biggr]^{
1+\gamma/4}_+\\
 \le\frac{CD^{1+\gamma/4}(\alpha^\circ)}{\gamma^3},
 \end{split}
 \end{equation}
  where $\gamma\le 1/\sqrt{2}$.

 The upper bound for  the fourth  line in (\ref{eqn.37}) is a little bit more tricky. Since $\bigl\{\tilde{h}_{\alpha}(\cdot), \alpha\in (0,\alpha^{\circ}]\bigr\}$ is a family of ordered functions, we obtain  by Proposition \ref{proposition.5} that for any $\epsilon>0$ and given $q'>1/2$,
\begin{equation}\label{eqn.40}
\begin{split}
 &\mathbf{E}\biggl|2\sigma\sum_{k=1}^p \lambda^{-1/2}(k)\bigl[\tilde{h}_{\hat\alpha}(k)-\tilde{h}_{\bar\alpha}(k)
\bigr]\xi'(k)\beta(k)\\&\quad - \epsilon\biggl\{\sigma^2\sum_{k=1}^p \lambda^{-1}(k)\bigl[\tilde{h}_{\hat\alpha}(k)-\tilde{h}_{\bar\alpha}(k)
\bigr]^2\beta^2(k)\biggr\}^{q'}\biggr|^{q} \le \frac{1}{(C\epsilon)^{q/(2q'-1)}}.
\end{split}
\end{equation}

To continue this inequality,  notice that
if $\hat\alpha\ge \bar \alpha$, then
\[
\frac{\tilde{h}_{\hat\alpha}(k)}{\tilde{h}_{\bar\alpha}(k)}\le 1, \quad \frac{\tilde{h}_{\hat\alpha}(k)}{\tilde{h}_{\bar\alpha}(k)}\ge \tilde{h}_{\hat\alpha}(k)
\]
and therefore
\begin{equation}\label{eqn.41}
\begin{split}
\sum_{k=1}^\infty
\bigl[\tilde{h}_{\bar\alpha}(k)-\tilde{h}_{\hat\alpha}(k)\bigr]^2\frac{\beta^2(k)}{\lambda(k)}
 =\sum_{k=1}^\infty
\tilde{h}^2_{\bar\alpha}(k)
\biggl[1-\frac{\tilde{h}_{\hat\alpha}(k)}{\tilde{h}_{\bar\alpha}(k)}\biggr]^2
\frac{\beta^2(k)}{\lambda(k)}
\\ \qquad
\le  \max_s \frac{\tilde{h}_{\bar\alpha}^2(s)}{\lambda(s)}  \sum_{k=1}^\infty
\bigl[1-\tilde{h}_{\hat\alpha}(k)\bigr]^2\beta^2(k)
\\ \qquad
\le  \max_s  \frac{4{h}_{\bar\alpha}^2(s)}{\lambda(s)}  \sum_{k=1}^\infty
\bigl[1-{h}_{\hat\alpha}(k)\bigr]^2\beta^2(k).
\end{split}
\end{equation}
Similarly, if $\hat\alpha < \bar\alpha$, then
\begin{equation}\label{eqn.42}
\begin{split}
\sum_{k=1}^\infty
\bigl[\tilde{h}_{\bar\alpha}(k)-\tilde{h}_{\hat\alpha}(k)\bigr]^2
\frac{\beta^2(k)}{\lambda(k)}
\le  \max_s \frac{ \tilde{h}_{\hat\alpha}^2(s) }{\lambda(s)} \sum_{k=1}^\infty
\bigl[1-\tilde{h}_{\bar\alpha}(k)\bigr]^2\beta^2(k)\\
\quad\le \max_s \frac{4 {h}_{\hat \alpha}^2(s)}{\lambda(s)}  \sum_{k=1}^\infty
\bigl[1-{h}_{\bar\alpha}(k)\bigr]^2\beta^2(k).
\end{split}
\end{equation}

So, combining (\ref{eqn.41}--\ref{eqn.42}) with  Young's inequality
\begin{equation*}
yx^s-x \le (1-s)s^{s/(1-s)}y^{1/(1-s)}, \quad
x,y\ge 0,\ s<1,
\end{equation*}
gives
\begin{equation}\label{eqn.43}
\begin{split}
\epsilon\biggl[\sigma^2\sum_{k=1}^\infty
\bigl[\tilde{h}_{\bar\alpha}(k)-\tilde{h}_{\hat\alpha}(k)\bigr]^2\lambda^{-1}(k)\beta^2(k)\biggr]^{q'}-\biggl(1-\frac{\gamma}{2}\biggr)L_{\hat\alpha}(\beta)\\
\quad \le\biggl(1-\frac{\gamma}{2}\biggr)^{-1}\epsilon\biggl[\sigma^2\sum_{k=1}^\infty
\bigl[\tilde{h}_{\bar\alpha}(k)-\tilde{h}_{\hat\alpha}(k)\bigr]^2\lambda^{-1}(k)\beta^2(k)\biggr]^{q'}-L_{\hat\alpha}(\beta)\\
\quad \le C\epsilon^{1/(1-q')}\biggl[\sum_{k=1}^\infty
\bigl[1-{h}_{\bar\alpha}(k)\bigr]^2\beta^2(k)+\sigma^2\max_k \frac{ {h}_{\bar\alpha}^2(k)}{\lambda(k)}\biggr]^{q'/(1-q')}.
\end{split}
\end{equation}

Thus, by (\ref{eqn.40}) and (\ref{eqn.43}) we obtain
\begin{equation*}
\begin{split}
\mathbf{E}\biggl|2\sigma  \sum_{k=1}^p \lambda^{-1/2}(k)\bigl[\tilde{h}_{\hat\alpha}(k)-\tilde{h}_{\bar\alpha}(k)
\bigr]\xi'(k)\beta(k)  - \biggl(1-\frac{\gamma}{2}\biggr)L_{\hat\alpha}(\beta)\biggr|^{q}\\
\le C\mathbf{E}\biggl|2\sigma  \sum_{k=1}^p \lambda^{-1/2}(k)\bigl[\tilde{h}_{\hat\alpha}(k)-\tilde{h}_{\bar\alpha}(k)
\bigr]\xi'(k)\beta(k)\\
\qquad - \biggl[\sigma^2\sum_{k=1}^p \lambda^{-1}(k)\bigl[\tilde{h}_{\hat\alpha}(k)-\tilde{h}_{\bar\alpha}(k)
\bigr]^2\beta^2(k)\biggr]^{q'}\biggr|^{q}\\
\qquad {}+ C \mathbf{E}\biggl| \biggl[\sigma^2\sum_{k=1}^p \lambda^{-1}(k)\bigl[\tilde{h}_{\hat\alpha}(k)-\tilde{h}_{\bar\alpha}(k)
\Bigr]^2\beta^2(k)\biggr]^{q'}- \biggl(1-\frac\gamma2\biggr) L_{\hat\alpha}(\beta)\biggr|^{q}
\\  \le (C\epsilon)^{-\frac{q}{2q'-1}}
 +C\epsilon^{\frac{q}{1-q'}}\biggl[\sum_{k=1}^\infty
\bigl[1-{h}_{\bar\alpha}(k)\bigr]^2\beta^2(k)+\sigma^2\max_k \frac{ {h}_{\bar\alpha}^2(k)}{\lambda(k)}\biggr]^{\frac{qq'}{1-q'}} .
\end{split}
\end{equation*}
Therefore, substituting in the above equation $q'=2/3$ and
\[
\epsilon =\biggl[\sum_{k=1}^\infty\bigl[1-{h}_{\bar\alpha}(k)\bigr]^2\beta^2(k)+\sigma^2\max_k \frac{ {h}_{\bar\alpha}^2(k)}{\lambda(k)}\biggr]^{-3q},
\] we get
\begin{equation}\label{eqn.44}
\begin{split}
\mathbf{E}\biggl|2\sigma  \sum_{k=1}^p \lambda^{-1/2}(k)\bigl[\tilde{h}_{\hat\alpha}(k)-\tilde{h}_{\bar\alpha}(k)
\bigr]\xi'(k)\beta(k)-\biggl(1-\frac\gamma2\biggr)L_{\hat\alpha}(\beta)\biggr|^{q}\\
\quad \le C\biggl[\sum_{k=1}^\infty
\bigl[1-{h}_{\bar\alpha}(k)\bigr]^2\beta^2(k)+\sigma^2\max_k \frac{ {h}_{\bar\alpha}^2(k)}{\lambda(k)}\biggr]^{q}.
\end{split}
\end{equation}

Now we proceed  with the last line in Equation (\ref{eqn.37}). Since $\bar\alpha$ is given,
we have by (\ref{eqn.31})
\begin{equation*}
\begin{split}
\bigl\{\mathbf{E}[\hat\sigma_{\bar\alpha}^2-\sigma^2]^2\bigr\}^{1/2}
\le \frac{\sigma^2}{\|1-h_{\bar\alpha}\|^2}\biggl\{\sum_{k=1}^p \bigl[1-h_{\bar\alpha}(k)\bigr]^4\biggr\}^{1/2}\\ {}+\frac{2\sigma}{\|1-h_{\bar\alpha}\|^2}\biggl\{\sum_{k=1}^p \bigl[1-h_{\bar\alpha}(k)\bigr]^4{\beta^2(k)}{\lambda(k)}\biggr\}^{1/2}\\ {}+\frac{1}{\|1-h_{\bar\alpha}\|^2}\sum_{k=1}^p \bigl\{1-h_{\bar\alpha}(k)\bigr\}^2{\beta^2(k)}{\lambda(k)}\\
\le \frac{C\sigma^2}{\|1-h_{\bar\alpha}\|}+\frac{C}{\|1-h_{\bar\alpha}\|^2}\sum_{k=1}^p \bigl[1-h_{\bar\alpha}(k)\bigr]^2{\beta^2(k)}{\lambda(k)}
\end{split}
\end{equation*}
and therefore
\begin{equation}\label{eqn.45}
\begin{split}
&\mathbf{E}\bigl\{[\hat\sigma_{\bar\alpha}^2-\sigma^2]_+Pen(\bar\alpha)\bigr\}^{1+\gamma/4} \le
C\bigl[\sigma^{-2} \Ps(\alpha_\circ,\alpha^\circ)\bar{R}_{\bar\alpha}(\beta)\bigr]^{1+\gamma/4}.
\end{split}
\end{equation}

The last term in (\ref{eqn.37}) can be bounded by Lemma \ref{lemma.5} and (\ref{eqn.16}) as follows:
\begin{equation}\label{eqn.46}
\begin{split}
&\mathbf{E}\bigl\{[\sigma^2-\hat\sigma_{\hat\alpha}^2]_+Pen(\hat\alpha)\bigr\}^{1+\gamma/4}\\ &\quad \le C\Ps^{1+\gamma/4}(\alpha_\circ) \mathbf{E}\bigl\{\sigma^{-2}L_{\hat\alpha}(\beta)+(1+\gamma)
Q^+(\hat\alpha)\bigr\}^{1+\gamma/4} .
\end{split}
\end{equation}

Finally combining Equations (\ref{eqn.37}), (\ref{eqn.38}), (\ref{eqn.39}), (\ref{eqn.44}), (\ref{eqn.45}), (\ref{eqn.46}),  we finish the proof.
\end{proof}

The next  idea    in   the proof of  Theorem \ref{theorem.1} is  that the data-driven parameter $\hat\alpha$ defined by (\ref{eqn.4}) cannot be very small, or equivalently, that the ratio
$D(\hat\alpha)/D(\alpha^{\circ})$ cannot be very large.
\begin{lemma}\label{lemma.8} For  any data-driven $\hat\alpha$ and any given $\bar\alpha\in [\alpha_\circ,\alpha^\circ]$,  the following upper bound holds
\begin{equation}\label{eqn.47}
\begin{split}
&\biggl\{\mathbf{E}\biggl[\frac {D(\hat\alpha)}{D(\alpha^{\circ})}\biggr]^{1+\gamma/4}\biggr\}^{1/(1+\gamma/4)}\\&\qquad\le \frac{C}{[1-C \Ps(\alpha_\circ,\alpha^\circ)/\gamma]_+}
 \R \biggl[\frac{\bar{R}_{\bar\alpha}(\beta)}{\sigma^2\gamma D(\alpha^{\circ})}+ \frac{1}{\gamma^4}\biggr],
\end{split}
\end{equation}
for any $  \gamma\in (0,1/4)$.
\end{lemma}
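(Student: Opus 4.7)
The idea is to couple the moment bound on $Q^+(\hat\alpha)$ extracted from Lemma~\ref{lemma.7} with the structural bounds of Proposition~\ref{proposition.7} (sharpened using condition~(\ref{eqn.17})) to invert the relationship between $Q^+$ and $D$ through the $\R$ function, and then to pass from pointwise inversion to a moment bound by truncation.

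First, since $L_{\hat\alpha}(\beta)\ge 0$, Lemma~\ref{lemma.7} immediately yields, with $A \stackrel{\rm def}{=} \bar R_{\bar\alpha}(\beta)/(\sigma^2\gamma D(\alpha^\circ))+1/\gamma^4$,
\[
\bigl\{\mathbf{E}[Q^+(\hat\alpha)/D(\alpha^\circ)]^{1+\gamma/4}\bigr\}^{1/(1+\gamma/4)} \le \frac{CA}{[1-C\Ps(\alpha_\circ,\alpha^\circ)/\gamma]_+}.
\]
So it suffices to convert this moment bound on $Q^+(\hat\alpha)/D(\alpha^\circ)$ into the corresponding bound on $X(\hat\alpha)\stackrel{\rm def}{=}D(\hat\alpha)/D(\alpha^\circ)$, losing only a factor $\log A$.

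Next, I would establish the pointwise inequality $X(\alpha) \le C\,\R\bigl(Q^+(\alpha)/D(\alpha^\circ)\bigr)$ on $\{X(\alpha)\ge e^2\}$. Proposition~\ref{proposition.7} gives $X \ge \R(\mu_\alpha Q^+(\alpha)/D(\alpha^\circ))$, which, rearranged, is an upper bound on $\mu_\alpha Q^+(\alpha)$ in terms of $X$ -- the wrong direction. To obtain an upper bound on $X$, I would use condition~(\ref{eqn.17}) to argue that $\mu_\alpha$ is bounded away from zero: a case analysis on whether $\max_k h_\alpha(k)/\lambda(k)$ or $\|h_\alpha\|_\lambda^2/\log X$ dominates in (\ref{eqn.17}) forces some coordinate $\rho_\alpha(k)$ to be bounded below by a constant depending only on $C_2$. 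The divergence of $F(x)$ at $x=1/2$ then forces $\mu_\alpha\rho_{\max}$ close to $1/2$ whenever $\log X$ is large, which upgrades the crude bound $Q^+\ge D\sqrt{\log X}$ from Proposition~\ref{proposition.7} to the sharper $Q^+(\alpha)\ge c_{C_2}D(\alpha)\log X(\alpha)$. This inverts directly to the required $X \le C\,\R(Q^+/D(\alpha^\circ))$.

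The final step is truncation at the threshold $X_0 \asymp \R\bigl(A/[1-C\Ps/\gamma]_+\bigr)$. On $\{X\le X_0\}$, $X^{1+\gamma/4}\le X_0^{1+\gamma/4}$; on $\{X > X_0\}$, the pointwise bound gives $X\le (Q^+/D(\alpha^\circ))/(c_{C_2}\log X_0)$, so
\[
\mathbf{E}X^{1+\gamma/4}\mathbf{1}\{X>X_0\}\le \frac{\mathbf{E}[Q^+(\hat\alpha)/D(\alpha^\circ)]^{1+\gamma/4}}{(c_{C_2}\log X_0)^{1+\gamma/4}}.
\]
Since $\log X_0 \asymp \log A$, both contributions are of order $\bigl\{\R(A)/[1-C\Ps/\gamma]_+\bigr\}^{1+\gamma/4}$, and taking the $(1+\gamma/4)$-th root yields~(\ref{eqn.47}).

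I expect the main difficulty to be the pointwise upgrade $Q^+(\alpha)\ge c\,D(\alpha)\log X(\alpha)$ via condition~(\ref{eqn.17}), particularly the case in which $\|h_\alpha\|_\lambda^2/\log X$ rather than $\max_k h_\alpha(k)/\lambda(k)$ is the dominant term. Without that upgrade, the first assertion of Proposition~\ref{proposition.7} alone gives only $Q^+\ge D\sqrt{\log X}$, which inverts merely to $\|X\|_{1+\gamma/4}\lesssim A/\sqrt{\log A}$, strictly weaker than the claimed $\R(A)=A/\log A$ bound.
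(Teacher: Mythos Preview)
Your overall strategy is close to the paper's, but there is a genuine gap exactly where you flag the ``main difficulty.'' Once you discard $L_{\hat\alpha}(\beta)$ in Step~1 and try to prove the pointwise upgrade $Q^+(\alpha)\ge c\,D(\alpha)\log X(\alpha)$ from condition~(\ref{eqn.17}) alone, the argument cannot be completed. In the regime where $\|h_\alpha\|_\lambda^2/\log X$ is the dominant term in~(\ref{eqn.17}), it is \emph{not} true that some $\rho_\alpha(k)$ stays bounded away from zero: for instance with $\lambda(k)\asymp k^{-1}$ and spectral cut-off at level $N$, one has $\max_k\rho_\alpha(k)\asymp N^{-1/2}\to 0$, while (\ref{eqn.17}) still holds. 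In that situation $\mu_\alpha\asymp\sqrt{\log X}$ and $Q^+(\alpha)\asymp D(\alpha)\sqrt{\log X}$ only, so the desired pointwise bound fails. Your case analysis handles only the branch where $\max_k h_\alpha(k)/\lambda(k)$ dominates.

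The missing idea is not to discard $L_{\hat\alpha}(\beta)$. Since $L_\alpha(\beta)\ge\sigma^2\|h_\alpha\|_\lambda^2$ by~(\ref{eqn.3}), Lemma~\ref{lemma.7} in fact controls the $(1+\gamma/4)$-th moment of the \emph{sum} $\|h_{\hat\alpha}\|_\lambda^2+Q^+(\hat\alpha)$. The correct pointwise inequality, which the paper proves as~(\ref{eqn.52}), is
\[
\|h_\alpha\|_\lambda^2+Q^+(\alpha)\ \ge\ C\,D(\alpha)\log X(\alpha):
\]
one first shows $Q^+(\alpha)\ge \bigl(\max_k h_\alpha(k)/\lambda(k)\bigr)\log X(\alpha)$ via (\ref{eqn.13})--(\ref{eqn.15}) together with $\mu_\alpha^{-1}\ge 2\rho_\alpha(k_\alpha)$, and then the two summands $\|h_\alpha\|_\lambda^2$ and $Q^+(\alpha)$ match the two terms of~(\ref{eqn.17}) separately. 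This yields a moment bound on $X\log X$, which is then inverted. Your truncation argument in the final step is a valid alternative to the paper's Jensen/convexity route, once it is fed the right quantity $\|h_{\hat\alpha}\|_\lambda^2+Q^+(\hat\alpha)$ rather than $Q^+(\hat\alpha)$ alone.
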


\begin{proof}
  Representing
 $$
 (1+\gamma)Q^+(\hat\alpha)=\biggl(1+\frac{\gamma}{2}\biggr)Q^+(\hat\alpha)
 +\frac{\gamma}{2} Q^+(\hat\alpha),
 $$
 we obtain with a simple algebra from (\ref{eqn.37})
\begin{equation*}
\begin{split}
\frac {\gamma\sigma^2}{2}\biggl[\sum_{k=1}^p \frac{h_{\hat\alpha }^2(k)}{ \lambda(k)} +(1+\gamma)Q^+(\hat\alpha)\biggr]
 \le \bar{R}_{\bar\alpha}(\beta) -\sigma^2 \sum_{k=1}^p \frac{\tilde{h}_{\bar\alpha}(k)}{\lambda(k)}[\xi'^2(k)-1]\\
\quad {}+\sigma^2\sup_{\alpha\le \alpha^{\circ}} \biggl[\sum_{k=1}^p \frac{\tilde{h}_{\alpha}(k)}{\lambda(k)}[\xi'^2(k)-1]
-\biggl(1+\frac{\gamma}{2}-\frac{\gamma^2}{2}\biggr)
Q^+(\alpha)\biggr]_+\\
\quad {}+2\sigma  \sum_{k=1}^p \lambda^{-1/2}(k)\bigl[\tilde{h}_{\hat\alpha}(k)-\tilde{h}_{\bar\alpha}(k)
\bigr]\xi'(k)\beta(k) -\biggl(1-\frac{\gamma}{2}\biggr)L_{\hat\alpha}(\beta) \\
\quad {}+[\hat\sigma_{\bar\alpha}^2-\sigma^2]Pen(\bar\alpha)+
[\sigma^2-\hat\sigma_{\hat\alpha}^2]_+Pen(\hat\alpha).
\end{split}
\end{equation*}

Combining this with Equations (\ref{eqn.38}), (\ref{eqn.39}), (\ref{eqn.44}), (\ref{eqn.45}), (\ref{eqn.46}), we obtain
\begin{equation}\label{eqn.48}
\begin{split}
&\biggl\{\mathbf{E} \biggl[\frac{\|h_{\hat\alpha}\|^2_\lambda +Q^+(\hat\alpha)}
{D(\alpha^\circ)}\biggr]^{1+\gamma/4}\biggr\}^{1/(1+\gamma/4)}
\\ & \qquad\le \frac{C}{[1-C \Ps(\alpha_\circ,\alpha^\circ)/\gamma]_+}  \biggl[\frac{C\bar{R}_{\bar\alpha}(\beta)}{\sigma^2 \gamma D(\alpha^\circ)}
+\frac{1}{\gamma^4}\biggr].
\end{split}
\end{equation}

To continue this inequality, we need a lower bound for $\|h_\alpha\|_\lambda^2 +Q^+(\alpha)$.
Notice that
\[
f(x)\stackrel{\rm def}{=}F(x)-\frac{x^2}{1-2x}=\frac{1}{2}\log(1-2x)+x+\frac{x^2}{1-2x}
\]
is a non-negative function for $x\ge 0$ since
\[
f'(x)=\frac{2x^2}{(1-2x)^2}\ge 0 \text{ and }f(0)=0.
\]
 Therefore the following inequality holds
\begin{equation}\label{eqn.49}
F(x)\ge \frac{x^2}{1-2x}.
\end{equation}

Let
 \begin{equation}\label{eqn.50}
 k_\alpha =\arg\max _k \frac{h_\alpha(k)}{\lambda(k)},
 \end{equation}
 then by
 (\ref{eqn.14}) and (\ref{eqn.49}) we obviously get
\begin{equation*}
\log\frac{D(\alpha)}{D(\alpha^\circ)}\ge F[\mu_\alpha \rho_\alpha(k_\alpha)]\ge \frac{[\mu_\alpha \rho_\alpha(k_\alpha)]^2}{1-2 \mu_\alpha \rho_\alpha(k_\alpha) }.
\end{equation*}
With this inequality we obtain
\[
\mu_\alpha \rho_\alpha(k_\alpha)\le \biggl\{1+\biggl[1+\log^{-1}\frac{D(\alpha)}{D(\alpha^\circ)}\biggr]^{1/2}\biggr\}^{-1},
\]
thus arriving at
\begin{equation}\label{eqn.51}
\mu_\alpha^{-1} \ge 2\rho_\alpha(k_\alpha).
\end{equation}

 Now we are in a position to bound from below $ \|h_\alpha\|_\lambda^2 +Q^+(\alpha)$. By (\ref{eqn.13}--\ref{eqn.15}), (\ref{eqn.50}--\ref{eqn.51}), and (\ref{eqn.17}) we obtain
\begin{equation}\label{eqn.52}
\begin{split}
\|h_\alpha\|_\lambda^2 +Q^+(\alpha)\ge \|h_\alpha\|_\lambda^2+\frac{2D(\alpha)}{\mu_\alpha}\sum_{k=1}^p \frac{[\mu_\alpha\rho_\alpha(k) ]^2}{1-2\mu_\alpha\rho_\alpha(k)}\\
\ge \|h_\alpha\|_\lambda^2+\frac{D(\alpha)}{\mu_\alpha}\sum_{k=1}^p F[\mu_\alpha\rho_\alpha(k)] =
\|h_\alpha\|_\lambda^2+\frac{D(\alpha)}{\mu_\alpha}\log\frac{D(\alpha)}{D(\alpha^\circ)}\\
\ge \|h_\alpha\|_\lambda^2+2\rho_\alpha(k_\alpha)D(\alpha)\log\frac{D(\alpha)}{D(\alpha^\circ)}\\
 \ge  \|h_\alpha\|_\lambda^2+ \frac{h_\alpha(k_\alpha)}{\lambda(k_\alpha)}\log\frac{D(\alpha)}{D(\alpha^\circ)}
\\
 \ge  \|h_\alpha\|_\lambda^2+ \log\frac{D(\alpha)}{D(\alpha^\circ)}\max _k \frac{h_\alpha(k)}{\lambda(k)}
\ge CD(\alpha)\log\frac{D(\alpha)}{D(\alpha^\circ)}.
\end{split}
\end{equation}

With the help of
(\ref{eqn.52}) we continue (\ref{eqn.48}) as follows:
\begin{equation}\label{eqn.53}
\begin{split}
\biggl\{\mathbf{E} \biggl[\frac {D(\hat\alpha)}{D(\alpha^{\circ})}\log\frac {D(\hat\alpha)}{D(\alpha^{\circ})}\biggr]^{1+\gamma/4}\biggr\}^{1/(1+\gamma/4)}\qquad\qquad
\\[6pt] \quad \le \frac{C}{[1-C \Ps(\alpha_\circ,\alpha^\circ)/\gamma]_+}  \biggl[\frac{C\bar{R}_{\bar\alpha}(\beta)}{\sigma^2 \gamma D(\alpha^\circ)}
+\frac{1}{\gamma^4}\biggr].
\end{split}
\end{equation}
To control from below the left-hand side in the above equation, notice that
\begin{equation}\label{eqn.54}
\begin{split}
\mathbf{E}\biggl[\frac {D(\hat\alpha)}{D(\bar{\alpha})}\log\frac {D(\hat\alpha)}{D(\bar{\alpha})}
\biggr]^{1+\gamma/4}=\frac{1}{(1+\gamma/4)^{1+\gamma/4}}
\mathbf{E}\biggl[\frac {D(\hat\alpha)}{D(\bar{\alpha})}\biggr]^{1+\gamma/4}
\\[6pt]
\times \biggl\{\log\biggr[\frac {D(\hat\alpha)}{D(\bar{\alpha})}\biggr]^{1+\gamma/4}\biggr\}^{1+\gamma/4}.
\end{split}
\end{equation}

To finish the proof, let us consider the function $f(x)=x\log^{1+\gamma/4}(x)$, $ x\ge 1$. Computing its second order derivative, one can easily check that $f(x)$ is convex for all $x\ge \exp(1)={\rm e}$. So, $f(x+{\rm e}-1)$ is convex for $x\ge 1$. It is easily seen
there exists a constant $C>0$ such that for all $x\ge 1$
\[
f(x)\ge \frac{1}{2} f(x+{\rm e}-1)-C.
\]
Therefore by (\ref{eqn.54}) and Jensen's inequality,
\begin{equation}\label{eqn.55}
\begin{split}
\mathbf{E}\biggl[\frac {D(\hat\alpha)}{D(\bar{\alpha})}\log\frac {D(\hat\alpha)}{D(\bar{\alpha})}\biggr]^{1+\gamma/4}\ge C \biggl\{\mathbf{E}\biggl[\frac {D(\hat\alpha)}{D(\bar{\alpha})}\biggr]^{1+\gamma/4}+{\rm e}-1\biggr\}
\\[6pt]
\quad \times \log^{1+\gamma/4}\biggl\{\mathbf{E}\biggr[\frac {D(\hat\alpha)}{D(\bar{\alpha})}\biggr]^{1+\gamma/4}+{\rm e}-1\biggr\}-C.
\end{split}
\end{equation}

Let
\[
\psi(x)=(x+{\rm e}-1)\log^{1+\gamma/4}(x+{\rm e}-1).
\]
It is easy to check  that the inverse function $\psi^{-1}(x)$ satisfies the following inequality
\begin{equation*}\label{equ.46}
\psi^{-1}(x)\le (x+{\rm e}-1)\log^{-1-\gamma/4}(x+{\rm e}-1).
\end{equation*}
Therefore combining this equation and (\ref{eqn.55}) with (\ref{eqn.53}), we arrive at (\ref{eqn.47}).
\end{proof}

Now we are ready to proceed with the proof of Theorem \ref{theorem.1}.
Let $\epsilon>0$ be a small given number to  be defined later on. By  (\ref{eqn.8}) and (\ref{eqn.9}),
the following equation for the skewed excess risk\vadjust{\vspace*{1cm}\eject}
\begin{equation}\label{eqn.56}
\begin{split}
\mathcal{E}(\epsilon)\stackrel{\rm def}{=} \sup_{\beta \in
\mathbb{R}^p}\mathbf{E}_\beta\Bigl\{\|\beta -\hat
\beta_{\hat\alpha}\|^2 -(1+\epsilon) \bigl\{{R}_{\hat \alpha}[Y]+\mathcal{C}\bigr\} \Bigr\}\\
 \ =\sup_{\beta \in
\mathbb{R}^p}\mathbf{E}_\beta\biggl\{ -\epsilon
\sum_{k=1}^p\bigl[1-h_{\hat\alpha}(k)\bigr]^2 \beta^2(k)-\epsilon\sigma^2 \sum_{k=1}^p
\lambda^{-1}(k) h_{\hat\alpha}^2(k)\\ \quad- (1+\epsilon)(1+\gamma)\sigma^2 Q^+(\hat\alpha)
\\
\quad -2\sigma
\sum_{k=1}^p\bigl\{1+\epsilon-[(1+2\epsilon)h_{\hat\alpha}(k)-\epsilon
h_{\hat\alpha}^2(k)
]\bigr\}\beta(k)\lambda^{-1/2}(k)
\xi'(k)  \\ \quad  +\sigma^2\sum_{k=1}^p
\lambda^{-1}(k)\bigl[2(1+\epsilon) h_{\hat\alpha}(k)-\epsilon h_{\hat\alpha}^2(k)\bigr][\xi'^2(k)-1]
\\ \qquad
 +[\sigma^2-\hat\sigma_{\hat\alpha}^2]Pen(\hat\alpha)
\biggr\}
\end{split}
\end{equation}
holds.

We proceed with the second   line from below at the right-hand side of this display.
 By Lemmas \ref{lemma.6} and \ref{lemma.8}, we obtain
\begin{equation}\label{eqn.57}
\begin{split}
\sigma^2\mathbf{E}\sum_{k=1}^p
\lambda^{-1}(k) \bigl[(1+\epsilon) h_{\hat\alpha}(k)-\epsilon h_{\hat\alpha}^2(k)\bigr][\xi'^2(k)-1]\\
\le \frac{C}{[1-C \Ps(\alpha_\circ,\alpha^\circ)/\gamma]_+\sqrt{\gamma} }
 \R \biggl[\frac{\bar{R}_{\bar\alpha}(\beta)}{\sigma^2\gamma D(\alpha^{\circ})}+ \frac{1}{\gamma^4}\biggr].
\end{split}
\end{equation}

The next step is to bound the third  line from below at the right-hand side of (\ref{eqn.56}).
It suffices   to note that $\tilde{h}_\alpha^\epsilon (k)=\bigl[(1+2\epsilon) h_{\alpha}(k)-\epsilon h_{\alpha}^2(k)\bigr]/(1+\epsilon)$
is the family of ordered functions. Hence, by Proposition \ref{proposition.6}, we get with $\bar\alpha=\arg\min_{\alpha\in[\alpha_\circ,\alpha^\circ]}\bar{R}_\alpha(\beta)$
 \begin{equation}\label{eqn.58}
\begin{split}
2\sigma\mathbf{E}
\sum_{k=1}^p\bigl\{1+\epsilon-[(1+2\epsilon)h_{\hat\alpha}(k)-\epsilon
h_{\hat\alpha}^2(k)
]\bigr\}\beta(k)\lambda^{-1/2}(k)
\xi'(k)\\ \quad =2(1+\epsilon)\sigma\mathbf{E}
\sum_{k=1}^p\bigl[h_{\bar\alpha}^\epsilon(k)-h_{\hat\alpha}^\epsilon(k)\bigr]\beta(k)\lambda^{-1/2}(k)
\xi'(k)\\ \quad  \le C \biggl[\sigma^2\mathbf{E}\max_k \lambda^{-1}(k) {h}_{\hat \alpha}^2(k)  \sum_{k=1}^p
\bigl[1-{h}_{\bar\alpha}(k)\bigr]^2\beta^2(k)\bigg]^{1/2}\\
\qquad {}+C \biggl[\sigma^2\max_k \lambda^{-1}(k) {h}_{\bar \alpha}^2(k) \mathbf{E} \sum_{k=1}^p
\bigl[1-{h}_{\hat\alpha}(k)\bigr]^2\beta^2(k)\bigg]^{1/2}\\ \quad \le
C\sigma^2\epsilon^{-1}\mathbf{E}\max_k \lambda^{-1}(k) {h}_{\hat \alpha}^2(k)+\epsilon
\sum_{k=1}^p
\bigl[1-{h}_{\bar\alpha}(k)\bigr]^2\beta^2(k)
\\ \qquad {}+
C\sigma^2\epsilon^{-1}\max_k \lambda^{-1}(k) {h}_{\bar \alpha}^2(k)+\epsilon
\mathbf{E}\sum_{k=1}^p
\bigl[1-{h}_{\hat\alpha}(k)\bigr]^2\beta^2(k)
.
\end{split}
\end{equation}

Therefore, substituting (\ref{eqn.45}), (\ref{eqn.46}), (\ref{eqn.57}), (\ref{eqn.58}) in (\ref{eqn.56}), we obtain the following upper bound for the skewed excess risk
\begin{equation}\label{eqn.59}
\begin{split}
\mathcal{E}(\epsilon)\le C\sigma^2\epsilon^{-1}\mathbf{E}\max_k \lambda^{-1}(k) {h}_{\hat \alpha}^2(k)-\sigma^2\mathbf{E}Q^+(\hat\alpha) +C \Ps(\alpha_\circ,\alpha^\circ)\bar{R}_{\bar\alpha}(\beta)\\  +C\sigma^2\epsilon^{-1}\max_k \lambda^{-1}(k) {h}_{ \bar\alpha}^2(k)+\epsilon \sum_{k=1}^p
\bigl[1-{h}_{\bar\alpha}(k)\bigr]^2\beta^2(k)\\ {}+ \frac{C\sigma^2D(\alpha^\circ)}{[1-C \Ps(\alpha_\circ,\alpha^\circ)/\gamma]_+\sqrt{\gamma} }
 \R \biggl[\frac{\bar{R}_{\bar\alpha}(\beta)}{\sigma^2\gamma D(\alpha^{\circ})}+ \frac{1}{\gamma^4}\biggr].
\end{split}
\end{equation}

Let us consider the function
\[
U(\epsilon)=\max_{\alpha\le \alpha^{\circ}}\bigl\{C\epsilon^{-1}\max_k \lambda^{-1}(k) {h}_{ \alpha}^2(k)-Q^+(\alpha)\bigr\}.
\]
Since
\begin{equation*}
\begin{split}
\max_k \frac{{h}_{ \alpha}^2(k)}{\lambda(k)} \le\max_k \frac{{h}_{ \alpha}(k)}{\lambda(k)}\le \biggl[\sum_{k=1}^p \frac{{h}_{ \alpha}^2(k)}{\lambda^2(k)}\biggr]^{1/2}\\
\le \biggl\{\sum_{k=1}^p \frac{{h}_{ \alpha}^2(k)}{\lambda^2(k)}[2-h_\alpha(k)]^2\biggr\}^{1/2}
\le  \frac{D(\alpha)}{\sqrt{2}}
\end{split}
\end{equation*}
 and by Proposition \ref{proposition.7}  $$Q^+(\alpha)\ge D(\alpha)\sqrt{\log \frac{D(\alpha)}{D(\alpha^{\circ})}},$$ we get
\begin{equation*}
\begin{split}
U(\epsilon)\le D(\alpha^{\circ}) \max_{\alpha\le \alpha^{\circ}}\biggl\{\frac{C}{\epsilon}\frac{D(\alpha)}{D(\alpha^{\circ})}-
\frac{D(\alpha)}{D(\alpha^{\circ})}\biggl[\log\frac{D(\alpha)}{D(\alpha^{\circ})}\biggr]^{1/2}\biggr\}\\
\le  D(\alpha^{\circ}) \max_{x\ge 1}\biggl\{ \frac{Cx}{\epsilon}-x\sqrt{\log(x)}\biggr\}.
\end{split}
\end{equation*}
One can easily check with a simple algebra that
\begin{equation}\label{eqn.60}
\max_{x\ge 1}\biggl\{ \frac{Cx}{\epsilon}-x\sqrt{\log(x)}\biggr\}\le\frac{\epsilon}{C}
\exp\biggl[\frac{C^2}{\epsilon^2}\biggr].
\end{equation}
Indeed, let $x^*=\arg\max_x\Bigl\{Cx/\epsilon-x\sqrt{\log(x)}\Bigr\}$. Then, differentiating $Cx/\epsilon-x\sqrt{\log(x)}$ in $x$, we obtain the following equation for $x^*$
\[
\frac{C}{\epsilon}-\sqrt{\log(x^*)}-\frac{1}{2\sqrt{\log(x^*)}}=0.
\]
Therefore
\[
x^*=\exp\biggl\{\biggl(\frac{C}{2\epsilon}+\sqrt{\frac{C^2}{4\epsilon^2}-1}\biggl)^2\biggr\}\le \exp\biggl(\frac{C^2}{\epsilon^2}\biggr).
\]
This equation proves (\ref{eqn.60}) since
\[
\max_{x\ge 1} \biggl\{\frac{Cx}{\epsilon }-x\sqrt{\log(x)}\biggr\}\le \frac{Cx^*}{\epsilon}.
\]
With (\ref{eqn.60}) we continue  (\ref{eqn.59}) as follows:
\begin{equation*}\label{equ.61}
\begin{split}
\mathcal{E}(\epsilon)\le C\sigma^2D(\alpha^{\circ}){\epsilon}
\exp\frac{C^2}{\epsilon^2}+C\Ps(\alpha_\circ,\alpha^\circ)\bar{R}_{\bar\alpha}(\beta)\\ {}+C\sigma^2\epsilon\sum_{k=1}^p \lambda^{-1}(k) {h}_{ \bar\alpha}^2(k)+\epsilon \sum_{k=1}^p
\bigl[1-{h}_{\bar\alpha}(k)\bigr]^2\beta^2(k)\\ {}+
\frac{C \sigma^2D(\alpha^\circ)}{[1-C \Ps(\alpha_\circ,\alpha^\circ)/\gamma]_+\sqrt{\gamma} }
 \R \biggl[\frac{\bar{R}_{\bar\alpha}(\beta)}{\sigma^2\gamma D(\alpha^{\circ})}+ \frac{1}{\gamma^4}\biggr]\\
 \le C\sigma^2D(\alpha^{\circ}){\epsilon}
\exp\frac{C^2}{\epsilon^2}+C\epsilon\bar{R}_{\bar\alpha}(\beta) +C \Ps(\alpha_\circ,\alpha^\circ)\bar{R}_{\bar\alpha}(\beta)\\ {}+ \frac{C}{[1-C \Ps(\alpha_\circ,\alpha^\circ)/\gamma]_+\sqrt{\gamma} }
 \R \biggl[\frac{\bar{R}_{\bar\alpha}(\beta)}{\sigma^2\gamma D(\alpha^{\circ})}+ \frac{1}{\gamma^4}\biggr] .
\end{split}
\end{equation*}

Therefore, substituting this equation in
 \[
 \mathbf{E}\|\beta-\hat\beta_{\hat\alpha}\|^2 \le (1+\epsilon)\bar{R}_{\bar\alpha}(\beta)+\mathcal{E}(\epsilon),
 \]
  we get
\begin{equation}\label{eqn.61}
\begin{split}
\mathbf{E}\|\beta-\hat\beta_{\hat\alpha}\|^2 \le [1+C \Ps(\alpha_\circ,\alpha^\circ)]r(\beta)+
C\sigma^2D(\alpha^{\circ})\times \\ \times \inf_{\epsilon}\biggl[\epsilon\exp\frac{C^2}{\epsilon^2}
+\frac{\epsilon r(\beta)}{\sigma^2 D(\alpha^{\circ})}\biggr]\\ {}+
\frac{C \sigma^2D(\alpha^\circ) }{[1-C \Ps(\alpha_\circ,\alpha^\circ)/\gamma]_+\sqrt{\gamma} }
 \R \biggl[\frac{r(\beta)}{\sigma^2\gamma D(\alpha^{\circ})}+ \frac{1}{\gamma^4}\biggr].
\end{split}
\end{equation}

Hence, to finish the proof of the theorem, it remains to check that
\begin{equation}\label{eqn.62}
\inf_\epsilon F(\epsilon,u)=\inf_{\epsilon}\biggl[\epsilon\exp\frac{C^2}{\epsilon^2}
+\epsilon u\biggr]\le \frac{Cu}{\sqrt{\log(u)}}.
\end{equation}
Let $\epsilon_*=\arg\min_{\epsilon}F(\epsilon,u)$. Then, differentiating $F(\epsilon,u)$ in $\epsilon$, we arrive at the following equation for $\epsilon_*$
\[
\exp\biggl(\frac{C^2}{\epsilon_*^2}\biggr)-\frac{C^2}{\epsilon_*^2}\exp\biggl(\frac{C^2}{\epsilon_*^2}\biggr)+u=0.
\]
Thus
\[
\frac{C^2}{\epsilon_*^2}+\log\biggl(\frac{C^2}{\epsilon_*^2}-1\biggr)=u
\]
and it follows immediately from the above equation that
\[
\epsilon_*\le \frac{C}{\sqrt{\log(u)}}
\]
and therefore
\[
F(\epsilon_*,u)\le 2u\epsilon_*\le \frac{2Cu}{\sqrt{\log(u)}},
\]
thus proving (\ref{eqn.62}).

Finally, substituting (\ref{eqn.62}) with $u=r(\beta)/[\sigma^2 D(\alpha^\circ)]$ in (\ref{eqn.61}), we complete the proof of the theorem.

\end{document}